\definecolor{lightblue}{rgb}{0.22,0.45,0.70}
\pgfplotsset{compat=1.5}
\def\addlegendimage{\csname pgfplots@addlegendimage\endcsname}
\newtheorem{remark}{Remark}[section]
\newtheorem{lemma}{Lemma}[section]
\newtheorem{theorem}{Theorem}[section]
\newtheorem{proposition}{Proposition}[section]
\renewcommand{\O}{\Omega}
\newcommand{\ds}{\,\mbox{d}s}
\newcommand\E{{E}}
\newcommand\dE{{\partial \E}}
\newcommand\Vh{{\mathcal V}_h}
\def\qin{{\quad\hbox{in}\quad}}
\newcommand{\calS}{ \mathcal{S}}
\newcommand\R{\mathbb{R}}
\newcommand\N{\mathbb{N}}
\renewcommand{\P}{{\mathbb P}}  
\newcommand{\M}{ \mathbb{M}}
\def\wP{\widetilde{\P}}
\def\gb{{\bf g}}
\def\xb{{\bf x}}
\def\vb{{\bf v}}
\def\nb{{\bf n}}
\def\tb{{\bf t}}
\def\nbs \boldsymbol{n}
\def\0{\boldsymbol{0}}
\def\calD{\mathcal D}
\def\calS{\mathcal S}
\def\tvI{\widetilde{v}_{\rm I}}
\def\vI{v_{\rm I}}
\def\D{ \mathbf{D}}
\def\CT{\mathcal{T}}
\def\CT{\O}
\def\VE{V_k^{h}(\E)}
\def\Vh{V_k^{h}}
\def\VVE{\mathbb{V}_k^{h}(\E)}
\def\VVh{\mathbb{V}_k^{h}}
\newcommand\bn{{\bf n}}
\def\dn{\partial_{\bn_{\E}}}
\def\PiK{\Pi_{\E}^{\D,k}}
\def\PiK{\Pi_{\E}^{\D,k}}
\def\Oh {\left\{\CT_h \right\}_{h>0}}
\newcommand\bt{{\bf t}}
\newcommand{\cblue}[1]{\textcolor{blue}{#1}}
\newenvironment{proof}{\noindent{\it Proof.}}{\hfill$\square$}
\begin{document}
\title{$C^1$ virtual element methods on polygonal meshes with curved edges}
	
\author{L. Beir\~ao da Veiga\thanks{Dipartimento di Matematica e Applicazioni,
		Universit\`a degli Studi di Milano Bicocca, Via Roberto Cozzi 55, Milano, Italy and
		IMATI-CNR, Via Ferrata 1, Pavia, Italy. 
		E-mail: {\tt lourenco.beirao@unimib.it}.},\quad 
	D. Mora\thanks{GIMNAP, Departamento de Matem\'atica, Universidad
		del B\'io-B\'io, Concepci\'on, Chile and
		CI$^2$MA, Universidad de Concepci\'on, Concepci\'on, Chile.
		E-mail: {\tt dmora@ubiobio.cl}.},\quad 
	A. Silgado\thanks{Dipartamento di Matematica, Universit\`a degli Studi di Bari, via E. Orabona, 4, 70125, Bari, Italy. E-mail:
		{\tt alberth.silgado@uniba.it}.}}

	\date{}
	\maketitle
	
\begin{abstract}
In this work we design a novel $C^1$-conforming virtual element method of arbitrary order $k \geq 2$,
to solve the biharmonic problem on a domain with curved boundary and internal curved interfaces in two dimensions. 
By introducing a suitable stabilizing form, we develop a rigorous interpolation, stability and convergence analysis obtaining optimal error estimates in the energy norm. Finally, we validate the theoretical findings through numerical experiments.
\end{abstract}
\noindent
{\bf Keywords}:  conforming virtual element method, biharmonic problem, fourth-order PDEs, polygonal meshes, curved domain, stability analysis, optimal convergence. 
\smallskip\noindent

{\bf Mathematics subject classifications (2000)}:  65N15, 65N30, 74K20, 76D07.

	\maketitle
	\section{Introduction} 


Partial Differential Equations (PDEs) are fundamental in modeling several physical phenomena, 
such as fluid and solid mechanics, electromagnetic fields, heat conduction, among others. 
Although numerical schemes have significantly improved the ability to approximate the solution of PDEs, the complexity and challenge increase notably when these equations are posed on domains whose boundary is curved (or have internal curved interfaces). Such kinds of domains appear commonly in practical applications, including aerodynamics, biomedicine, geophysics, among others, where the geometries of interest do not conform to simple flat shapes. It is well known that standard numerical schemes that utilize meshes with straight edges approximate the curved interface/boundary of interest through a linear interpolation. This process introduces a geometry approximation error, which can significantly affect the analysis and the optimal rate of convergence, especially for schemes of higher-order (see for instance~\cite{Thomee73}).
In order to overcome this drawback, several alternatives have been developed. For instance, among these we can find the isoparametric finite element~\cite{CR72,Zlamal1973,Lenoir86}, where high-order polynomial approximations of the curvilinear boundary is required, together with a careful choice of the isoparametric nodes. Another interesting approach is the isogeometric analysis~\cite{HCB2005,BBCHS2006,CHB2009}, which can obtain the exact representation of computational (CAD) domains. 

Recently, the curved virtual element method \cite{BRV2019_M2AN} has been presented as an alternative to avoid both the inconveniences associated with the effect of geometric error and the approach of finite isoparametric methods mentioned above. The Virtual Element Method (abbreviated as VEM) is a recent technology introduced in the pioneering work~\cite{BBCMMR2013} (we also refer to~\cite{BBMR2014} for some computational implementation aspects) and belongs to the group of polytopal Galerkin methods for discretizing PDEs. These schemes have garnered significant attention in recent years  from the scientific community  due to their inherent versatility in handling complex geometries. In particular, since its introduction, the VEM has been employed to discretize a wide range of problems in continuous mechanics. We refer to~\cite{VEM-Book,BBMR2023_ActaNumer} for a current state of the art on VEM and different applications.

It is well known that to construct conforming numerical schemes for fourth-order problems (in primal form) is required that the discrete spaces have $C^1$-continuity, this fact renders the methods highly complex. In particular, for the two-dimensional case, by employing traditional finite element schemes, polynomials of high-order are required to create $C^1$-approximations (e.g. degree fourth and five for the Bell and Argyris triangles, respectively), increasing the number of degrees of freedom and hence the computational complexity; for further details, we refer to~\cite[Chap. 6, sect. 6.1]{ciarlet}.  In contrast, the authors in~\cite{BM13} (see also \cite{ABSV2016,ChM-camwa}), have been exploiting the ability of the VEM to construct  discrete spaces with high-regularity. In particular, they have developed $C^1$-conforming schemes based on the VEM for solving fourth-order problems by employing   
low-order polynomials and few degrees of freedom; it can be seen that the lowest polynomial degree is $k=2$, and the associated degrees of freedom are $3$ per element vertex (the function and its gradient values vertex). In addition, we mention that VEM has also been applied to general polyharmonic problems, see for instance~\cite{AMV2022}.

The applicability of the VEM to solve elliptic problems on domains with curves edges and internal interfaces in two dimensions was started in~\cite{BRV2019_M2AN}. More precisely, the authors have been developed $C^0$-VEMs of high-order to solve the Poisson problem. By exploiting the facts that the VEM does not need an explicit expression of the basis functions and that the spaces are defined directly in physical domain (without using reference elements), the curvilinear approach enables the definition of discrete spaces within curved elements (directly), allowing an exact representation of the domain of interest by relying solely on a careful selection of the degrees of freedom and using a piecewise regular parametrization of the domain interface/boundary.

Since its introduction the approach of curved VEM has been employed to discretize several problems. For instance, in~\cite{ABD2020,AHABW2020} have been developed curvilinear virtual elements with application to two dimensional solid
mechanics and contact problems.  In~\cite{DFLSSV2021_cmame,DFSV2022_camwa}, the curved VEM in mixed formulations have been developed in two and three dimensions. The approach also has been utilized for approximating solutions to the wave equation~\cite{DFMSV2022_JSC}.  Other approaches have been investigated in~\cite{FS2018_M2AN,BBMR2020_M3AS,BPP2022_cmame} for two and three dimensions.

On the other hand, in \cite{BLMRV2024_JSC} the authors have been designed a curved VEM by employing the nonconforming approach, which is based on the computation of a novel Ritz-Galerkin operator that is different from the standard $H^1$-projection typically used in the VEM framework.
 
Further, different polytopic schemes have been developed to address curved domains. Among these, we mention the hybrid high-order method for the Poisson~\cite{BD-P2018_JCOMP,Yemm2023} and the singularly perturbed fourth-order problems~\cite{DE2021_M2AN}; the unfitted hybrid high-order method~\cite{BCDE2021_SISC}; then the extended hybridizable discontinuous Galerkin method~\cite{GSKF2016_JSC} and the Trefftz-based finite element method~\cite{AORW2020_SISC}. 

In this work we are interested in solving the Biharmonic problem on domains with curved boundary.  Fourth-order problems arise in many physical phenomena, for instance, in plate bending problems, fluid flow problems (in stream function form), the Cahn--Hilliard phase-field model, micro-electromechanical systems, among others. Due to its importance and challenging nature, several approaches have been devoted to the design of numerical schemes to solve these problems; see for instance~\cite{ciarlet,CMN2021,MS2003,GH2009,BGGS2012}, where classical conforming/nonconforming finite element schemes and  $C^0$-IP methods have been developed. Additionally, several schemes and analyses addressing fourth-order problems by employing the $C^1$-conforming VE approach have been presented; see for instance~\cite{BM13,ChM-camwa,ABSV2016,MRV2018_M2AN,MRS2021-IMAJNA,G2021,KMBb2025}.

This paper is devoted to the design of $C^1$-conforming VEM of arbitrary order $k \geq 2$ on curved domains and the development of a rigorous interpolation, stability and convergence analysis, showing \emph{optimal} error estimates in the $H^2$-norm. In particular, the proposed approach generalizes the $C^1$-VE space to the case of curved boundaries and internal interfaces. When the curved boundary is actually straight, the VE space and degrees of freedom of the proposed method simplifies to the space and degrees of freedom described in~\cite{BM13}. However, the definition of the stabilizing form remains different (see below~\eqref{eq:stab:form} and Remark~\ref{Remark:new:stab}).

The construction of the  present scheme takes the steps from the approach developed in~\cite{BRV2019_M2AN} for the simpler $C^0$-VEM for the Poisson equation. An adequate selection of the degrees of freedom and a piecewise regular parametrization of the domain boundary allow the construction of a computable  $H^2$-VEM operator, and a computable load term in the virtual space. A rigorous interpolation, stability and convergence analysis is provided.

It is interesting to note that the analysis here presented cannot be accomplished by a standard extension of the arguments in \cite{BRV2019_M2AN}. The proving strategy of many important aspects needs to deviate considerably from the path laid in the mentioned article as we briefly outline here below. 

This difficulty is related to the $H^2$-conformity required for the discrete space and the need to handle the associated higher-order terms on curved edges. Another peculiar challenge of the present approach is related to the preservation of the kernel of the involved bilinear form which for the simpler second-order diffusion problem is guaranteed by construction (since the local kernel is given by constant functions), while here it fails to be valid. Indeed, in the present approach, the space $\VE$ does not contain the kernel of $a_{\E}(\cdot,\cdot)$, i.e., $\P_1(\E) \nsubseteq \VE$ (see Remarks~\ref{remark:non:poly:containing} and~\ref{Remark:new:stab}).
As a consequence, in order to prove stability and convergence of the scheme, we need to introduce a new stabilizing form, which is made of partial contributions of the degrees of freedom (the internals ones and the evaluations of the function and its gradient at the vertexes), plus integrals on edges involving tangential and normal derivatives of the virtual functions (see below~\eqref{eq:new:stab:form} and~\eqref{eq:stab:form}). This change is motivated by the fact that, the classical choice for the stabilizing form, i.e., the ``\emph{dofi-dofi}'' stabilization~\cite{BBCMMR2013,BM13}, is not guaranteed to satisfy the coercivity property in the space  $\VE+\P_1(\E)$. 

We also recall that, unlike in~\cite{BRV2019_M2AN}, the {\it maximum principle} does not hold for biharmonic functions in this setting. Nevertheless, the $H^2$-decomposition result, together with the arguments developed in this work, suffices to establish the coercivity of the stabilizing form (cf. Lemma~\ref{lemma:coercivity:S_E}) and, consequently, the coercivity of the bilinear form  $a_{E}(\cdot,\cdot)$ (cf. Proposition~\ref{prop:stability:coerc}). See also Remark~\ref{remark:error:estimate}.

Finally, in this contribution we define a specific error quantity, namely, $T^h(u)$ (cf.  equation~\eqref{eq:non-conformity}),  which accounts for the VEM approximation of the bilinear form $a_{E}(\cdot,\cdot)$. The challenge in handling this term in the current context is that the norm in which the discrete bilinear form is continuous is stronger than the norm in which we are able to derive interpolation error estimates, see Remark~\ref{remark:non-conformity}. Furthermore, since this term involves the sum space $\Vh + \P_k(\O_h)$, it is not easily handled by bridging the two norms with inverse inequalities.

\paragraph*{Preliminary notations and model problem.}
For any open bounded Lipschitz domain $\calD$ in $\R^2$, and for $t \geq 0$,
we will employ the usual notation for the Sobolev spaces $H^{t}(\calD)$, according to Reference \cite{AF2003}, with their corresponding seminorm  and norms  denoted by $|\cdot|_{t,\calD}$  and  $\|\cdot\|_{t,\calD}$, respectively.
In  the particular case $t=0$,  we  write  $L^2(\calD)$  instead of $H^{0}(\calD)$ and its 
standards $L^2$ inner-product and norm are denote by $(\cdot,\cdot)_{0,\calD}$  and  $\|\cdot\|_{0,\calD}$.  
Furthermore, we introduce the Sobolev spaces~$W^{s,\infty}(\calD)$, $s\in \N \cup \{0\}$,
of functions having weak derivatives up to order~$s$, which are bounded almost everywhere in~$\calD$.
The particular case~$s=0$ coincides with the space $L^\infty(\calD)$. As usual,
the spaces of non-integer order~$s > 0$, are constructed by interpolation theory.
The corresponding seminorm and norm are denote by $|\cdot|_{W^{s,\infty}(\calD)}$ and
$\|\cdot\|_{W^{s,\infty}(\calD)}$, respectively. Moreover, with the usual notation the symbols $\nabla v$, 
$\Delta v$, $\Delta^2 v$  and $\D^2 v$ denote the gradient, the Laplacian, the Bilaplacian and the 
Hessian matrix for a regular enough scalar function $v$.

From now on, $\O $ will denote a Lipschitz domain in $\R^2$ with (possibly) curved boundary~$\Gamma:=\partial\Omega$ 
and unit outward normal $\bn$. We consider the Biharmonic problem with homogeneous Dirichlet boundary conditions, which reads as: given $f \in L^2(\Omega)$, find $u: \O  \to  \R$ such that
	\begin{equation}\label{Bihar:model}
		\begin{cases}
			\Delta^2 u = f   &\text{in } \Omega,\\
			u = \partial_{\bn} u = 0       &\text{on } \Gamma.
		\end{cases}
	\end{equation}

A variational formulation of problem~\eqref{Bihar:model}  reads as follows: find $u \in V:= H_0^2(\O)$, such that 
 \begin{equation}\label{Bihar:weak}
 	a(u,v)= (f,v)_{0,\Omega} \qquad \forall v\in V,	
 \end{equation}
where the bilinear form is given by:
\begin{equation}\label{cont:form}
a(u,v) := (\D^2 u, \D^2 v)_{0,\Omega}\ \qquad \forall u,v\in V.
\end{equation}
By using that $\|\D^2 \cdot\|_{0,\O}$ is a norm equivalent to the  $H^2$-norm, we have that the bilinear form $a(\cdot,\cdot)$ is $V$-elliptic. Thus, problem~\eqref{Bihar:weak} is well posed as a consequence of the Lax-Milgram Theorem.

Now, we will consider additional notations and settings. For the domain $\O$, we assume that its 
boundary~$\Gamma$ is the union
a finite number of smooth curves $\{\Gamma_i\}_{i=1}^{N}$, i.e.,
\[
\Gamma= \bigcup_{i=1}^N\Gamma_i.
\]

Moreover, we consider the following assumption: 
\begin{description}
	\item[$\mathbf{(A0)}$] the boundary $\Gamma$ is Lipschitz, and each component curve $\Gamma_i$ of $\Gamma$ is sufficiently smooth. In particular, we assume that each $\Gamma_i$ is of class {$C^{k+1}$}, where $k \geq 2$ will be introduced later in Section~\ref{SECTION:VE:SCHEME}, as it is related to the polynomial degree of the numerical method.
\end{description}

Let  $I_i := [a_i, b_i] \subset \R$ be a closed interval of $\R $ and $\gamma_i \colon I_i \to \Gamma_i$ be a given {$C^{k+1}$}-regular and invertible parametrization of the curve $\Gamma_i$, for $i=1, \dots, N$. 

In what follows, as all parts  $\Gamma_i$ of $\partial \Omega$ will be handled similarly, for the sake of simplicity in notation, we will omit the index $i$ from the related maps and parameters.

\paragraph*{Outline.} The layout of this paper is as follows. In Section~\ref{SECTION:VE:SCHEME} we first introduce some notations and basic settings on the regular polygonal meshes, and afterwards describe the virtual element spaces together with their degrees of freedom, the discrete forms and the VE formulation. In Section~\ref{THEORIC:SECTION}, we provide a rigorous analysis for the proposed VE scheme. In particular, we establish interpolation, stability and convergence analysis for the method. 
Finally, in Section~\ref{SECTION:NUMERICAL:RESULT} of the article we present a set of numerical tests underlining the optimal behaviour of the scheme also from the practical perspective.
\setcounter{equation}{0}	
\section{The $C^1$ virtual element scheme on curved domains}\label{SECTION:VE:SCHEME}
In this section, we introduce a family of $C^1$-conforming VEMs  of arbitrary order $k \geq 2$ for the numerical approximation 
of problem \eqref{Bihar:weak} on curved domains. 
First, we introduce some ingredients to construct the discrete virtual scheme, 
such as,  notations on the polygonal decompositions, local and global virtual spaces
together with their corresponding degrees of freedom, polynomial projections, the
discrete bilinear forms and the load term approximation. We conclude this section presenting 
the discrete VE formulation.
  
\subsection{Notations and  basic settings on the polygonal decompositions}
\label{sec:dofs}

\paragraph{Mesh assumptions.}
Henceforth, we will denote by $\E$ a general polygon, and by $e$ a general edge of 
$\partial\E$ (possibly curved).  We denote by $h_\E$ the diameter of the element  $\E$ and by 
$h_e$ the size of the edge $e$, where the size of an edge is the distance 
between its two endpoints (see  below Remark~\ref{length:edges}). 
Moreover, we will denote by $x_e$ and $\xb_{\E}$ the midpoint of $e$ 
and the baricenter of $\E$, respectively. 
For each element $\E$, we denote by $N_\E$ the number 
of vertices of $\E$, by $N_e$ the number of edges of $\E$. We associate the outward unit normal 
vector $\bn_\E$ and the unit tangent vector  $\bt_{\E}$ to $\E$.
Although for a polygon the quantities $N_E$  and $N_e$ are the same, we prefer to denote them differently.

Let $\Oh$ be a sequence of decompositions of $\O$ into general 
non-overlapping polygons $\E$, where $h:=\max_{\E\in\CT_h}h_\E.$
For all $h$, we will consider the classical regularity assumptions on the decomposition $\CT_h$:
there exist a  positive constant $\rho$ independent of $h$, such that
\begin{description}
	\item [$\mathbf{(A1)}$] each element $E$ is star-shaped with respect to a ball $B_{\E}$  larger than or equal to $ \geq\, \rho \, h_E$, 
	\item [$\mathbf{(A2)}$] for each element $\E$ and any of its edges  (possibly curved),  the edge length satisfies $h_e\geq \rho \, h_E$.
\end{description}

We will write $a\lesssim b$ and  $a\gtrsim b$  instead of $a\leq Cb$ and  $a\geq Cb$, respectively, where $C$ is positive constant  independent of the decomposition~$\CT_h$. Sometimes we write $a\approx b$ when 
$a \lesssim b$ and~$b \lesssim a$ occur at the same time.  The involved constants will be explicitly  written only when necessary. 
Moreover, for the Sobolev norms on curves edges, we will employ the same notation and definition as in \cite{BRV2019_M2AN}.

Assumptions $\mathbf{(A1)}$--$\mathbf{(A2)}$ imply that each element $\E$ has a uniformly bounded number of edges. Moreover, it guarantees that the constants in the standard inverse estimates and the forthcoming trace   inequalities are uniformly bounded (see below Lemmas~\ref{lemma:trace:BRV} and~\ref{lemma:scaled:trace}).

\begin{remark}\label{length:edges}
Let $L_e =\int_{I_e}\|\gamma'(t)\|$ be the length of a curved edge.
Since $\gamma$ and~$\gamma^{-1}$ are fixed once and for all, and both are of class~$W^{1,\infty}$, 
we have that the values $h_e$ and $L_e$ are comparable.
Therefore in what follows, by simplicity we will refer to both quantities as ``length'' of $e$.
\end{remark}

\paragraph{Related polynomial spaces.}
Let~$\P_{\ell}(\E)$, $\ell\in \N$, be the space of polynomials of maximum degree~$\ell$ over each 
element~$\E$, with the convention $\P_{-2}(\E)=\P_{-1}(\E) = \{0\}$.
We introduce a basis for the space~$\P_{\ell}(\E)$ given by 
the set of shifted and scaled monomials:
\begin{equation*}
\mathbb{M}_{\ell}(\E) =
\left\{
\left(\dfrac{\xb-\xb_{\E}}{h_{\E}}\right)^{\boldsymbol{\alpha}}
\; \forall  \boldsymbol{\alpha} \in \N^2, \; \vert \alpha \vert \le \ell,\;\;
\forall \xb \in \E \right\},
\end{equation*}
where $\boldsymbol{\alpha}$ denotes a multi-index~$\boldsymbol{\alpha}=(\alpha_1,\alpha_2)$.

Similarly, we consider the space of polynomials of maximum degree~$\ell$ over the interval~$I_e$, 
which is defined by $\P_\ell(I_e)$, with $\ell\in \N$. Given~${x}_{I_e}$ and~$h_{I_e}$ the midpoint 
and  the length of~$I_e$, we introduce a basis for the polynomial space~$\P_\ell(I_e)$ given by the following 
set of shifted  and scaled  monomials:
\begin{equation*}
\mathbb{M}_{\ell}(I_e) =
\left\{
\left(\dfrac{{x}-{x}_{I_e}}{h_{I_e}}\right)^{\alpha}
\; \forall  \alpha \in \N, \;  \alpha \le \ell,\;\;
\forall x \in I_e \right\}.
\end{equation*}

Since~$\gamma_e: I_e \to e$ denotes the parametrization of the edge~$e$,
we consider the following mapped polynomial space:
\begin{equation}\label{def:poli}
\widetilde{\P}_{\ell}(e)      = \big\{\widetilde{q_{\ell}} = \widehat{q_{\ell}}\, \circ \gamma_e^{-1}:\ \widehat{q_{\ell}}\in \P_{\ell}(I_e)\big\}.
\end{equation}
Clearly, whenever $e$ is straight (in which case we always take $\gamma$ as an affine mapping), the space $\widetilde{\P}_{\ell}(e)$ correspond to the standard polynomial space $\P_{\ell}(e)$. 

Finally, we define the piecewise $\ell$-order polynomial space by:
\begin{equation*}
	\P_{\ell}(\CT_h) := \{ q \in L^2(\O): q|_{\E} \in \P_{\ell}(\E) \quad\forall \E \in \CT_h \}. 
\end{equation*} 
\subsection{Virtual element spaces and their degrees of freedom}

In this subsection, we introduce a family of conforming virtual element spaces on curved domains and equip these spaces with suitable sets of degrees of freedom. For simplicity of exposition, in the following we will assume that every element $\E$ has at most one curved edge lying on $\Gamma$ (hence, only one edge of $\E$ will be curved and the remaining will be straight); the extension to the case with more curved edges is trivial. Moreover, we assume that each curved edge lies on only one regular curve, i.e, $e \subseteq \Gamma_i$. This last condition is mandatory for the approach followed in this work.

Let $E \in \Omega_h$ with $\partial E = \bigcup_{i=1}^{N_E} e_i$, where $e_1 \subset \Gamma$, and $e_2, \cdots, e_{N_E}$ are straight segments. Thus, for each integer $k \geq 2$ we  introduce the index $r:=\max\{k,3\}$ and the local virtual space on the curved element $E$:
\begin{multline}\label{local:virtual:space}
\Vh(\E) := \Bigl\{ v_h \in H^2(\E):   
		\Delta^2 v_h \in \P_{k-4}(\E),\ v_h|_{e_1} \in \widetilde{\P}_{r}(e_1),\: \partial_{\nb_{\E}} v_h|_{e_1} \in\widetilde{\P}_{k-1}(e_1), \Bigr. \\
	\Bigl. v_h|_{e_i}\in \P_r(e_i), \: \partial_{\nb_{\E}} v_h|_{e_i} \in\P_{k-1}(e_i) \quad \text{for $i=2, \dots, N_E$} \Bigr\}.
\end{multline}

\begin{remark}\label{remark:non:poly:containing}
From definition \eqref{local:virtual:space} it is clear that if  the polygon $\E$ has at least a curved edge, then $\P_0(E) \subseteq \VE$ but $\P_k(E) \nsubseteq \VE$ for $k \ge 1$. The property $\P_0(E) \subseteq \VE$ was critical in the analysis of \cite{BRV2019_M2AN} since $\P_0(E)$ represents the kernel of the associated local bilinear form $(\nabla \cdot, \nabla \cdot)_{0,E}$ for the simpler Poisson equation. Therefore, such property was guaranteeing that the kernel of the original local bilinear form was contained in the virtual space.
For the present biharmonic problem, the local kernel of the involved bilinear form is $\P_1(E)$, which is not contained in $\VE$ for curved elements.
This makes it more complex to prove the stability properties of the discrete bilinear form because the kernel clearly plays a key role in the related ``energy'' equivalence properties (from a more technical standpoint, the importance of $\P_1(E)$ is clear from the proof of Proposition \ref{prop:stability:coerc}, which in turn needs Lemma \ref{lemma:coercivity:S_E}, showing coercivity on the sum space $\VE + \P_1(E)$). 
In particular, we need to be more careful in constructing the discrete approximated bilinear form $a^h(\cdot,\cdot)$, especially with the design of the stabilizing part $\mathcal{S}_{\E}(\cdot,\cdot)$; see below~\eqref{eq:new:stab:form}.
Additional explanations can be found in Remark \ref{Remark:new:stab}.		
\end{remark}

For each $v_h \in\VE$, we define the following sets  of linear functionals $\boldsymbol{D}$ (split into boundary operators $\boldsymbol{D^{\partial}}$ and internal ones $\boldsymbol{D^o}$):
\begin{itemize}
	\item $\boldsymbol{D^{\partial}_{\!I}}$: the values of $v_h$ at the vertexes $\vb_i$  for $i=1, \dots, N_E$ of the element $E$;
	\item $\boldsymbol{D^{\partial}_{\!I\!I}}$: the values of $ h_{\vb_i}\nabla v_h$ at the vertexes $\vb_i$  for $i=1, \dots, N_E$ of the element $E$;
	\item $\boldsymbol{D^{\partial}_{\!I\!I\!I}}$: the values of $v_h$ at $k_e=\max\{0,k-3\}$ internal points of the $(k+1)$-point  Gauss-Lobatto quadrature rule of every straight edge $e_2, \dots, e_{N_E} \in \partial E$;
	\item $\boldsymbol{D^{\partial}_{\!I\!V}}$: the values of $v_h$ at $k_e=\max\{0,k-3\}$ internal points of $e_1$ that are the images through $\gamma$ of the $k_e$ internal points of the  $(k+1)$-point  Gauss-Lobatto quadrature on $I_{e_1}$;
	\item $\boldsymbol{D^{\partial}_{\!V}}$: the values of $h_e \partial_{\nb_{\E}}v_h$ at $k_{\bn}=k-2$ internal points of the $k$-point  Gauss-Lobatto quadrature rule of every straight edge $e_2, \dots, e_{N_E} \in \partial E$;
	\item $\boldsymbol{D^{\partial}_{\!V\!I}}$: the values of $h_e\partial_{\nb_{\E}}v_h$ at $k_{\bn}=k-2$ internal points of $e_1$ that are the images through $\gamma$ of the $k_{\bn}$ internal points of the  $k$-point  Gauss-Lobatto quadrature on $I_{e_1}$,
	\item $\boldsymbol{D^o}$: for $k\geq 4 $, the internal moments of $v_h$ against the scaled monomials $\{m_j\}^{(k-3)(k-2)/2}_{j=1} \in \M_{k-4}(\E)$, i.e.,
	\[
	\boldsymbol{D}_j^{\boldsymbol{o}}(v_h):= h^{-2}_{\E} \int_E v_h \, m_j \, {\rm d}\E,
	\]
\end{itemize}
where $h_{\vb_i}$ corresponds to the average of the diameters corresponding to the elements with $\vb_i$ as a vertex.
A visualization of these DoFs on a curved pentagon can be seen in Figure~\ref{fig:dofs:loc}, for the cases $k=3$ and $k=4$.

\begin{figure}[!h]
	\centering
	\begin{tikzpicture}[scale=0.5]
		\draw[line width=0.35mm, black ] (0,0) -- (8,0)--(10,5);	
		\draw[line width=0.35mm, red ] (10,5) arc (0:102:4.1cm) node[midway]{$\blacksquare$};
		\draw[line width=0.35mm, black ] (5,9)--(1,5)--(0,0);	
		\draw [fill] (0,0) circle (0.1);
		\draw [thick] (0,0) circle (0.4);
		\draw [fill] (8,0) circle (0.1);
		\draw [thick] (8,0) circle (0.4);  
		\draw [fill] (10,5) circle (0.1); 
		\draw [thick] (10,5) circle (0.4);  
		\draw [fill] (5,9) circle (0.1);
		\draw [thick] (5,9) circle (0.4);  	
		\draw [fill] (1,5) circle (0.1);
		\draw [thick] (1,5) circle (0.4); 
		 \draw[] (0,0) -- (8,0) node[midway] {$\cblue{\bigstar}$};
		 \draw[] (5,9) -- (1,5) node[midway] {$\cblue{\bigstar}$};
		 \draw[] (0,0) -- (1,5) node[midway] {$\cblue{\bigstar}$};
		 \draw[] (8,0) -- (10,5) node[midway] {$\cblue{\bigstar}$};
	
		\draw[line width=0.35mm, black ] (15,0) -- (23,0)--(25,5);	
		\draw[line width=0.35mm, red] (25,5) arc (0:102:4.1cm) node[pos=0.25]{$\blacksquare$};
		\draw[line width=0.35mm, red] (25,5) arc (0:102:4.1cm) node[midway]{$\blacktriangle$};
		\draw[line width=0.35mm, red] (25,5) arc (0:102:4.1cm) node[pos=0.75]{$\blacksquare$};
		\draw[line width=0.35mm, black ] (20,9)--(16,5)--(15,0);	
		\draw [fill] (15,0) circle (0.1);
		\draw [thick] (15,0) circle (0.4);
		\draw [fill] (23,0) circle (0.1);
		\draw [thick] (23,0) circle (0.4);
		\draw [fill] (25,5) circle (0.1);
		\draw [thick] (25,5) circle (0.4); 
		\draw [fill] (20,9) circle (0.1);
		\draw [thick] (20,9) circle (0.4);  	
		\draw [fill] (16,5) circle (0.1);
		\draw [thick] (16,5) circle (0.4);     
		\draw[] (15,0) -- (23,0) node[pos=0.25] {$\cblue{\bigstar}$} node[midway] {${\blacklozenge}$} node[pos=0.75] {$\cblue{\bigstar}$};
		\draw[] (23,0) -- (25,5) node[pos=0.25] {$\cblue{\bigstar}$} node[midway] {${\blacklozenge}$} node[pos=0.75] {$\cblue{\bigstar}$};
		\draw[] (20,9) -- (16,5) node[pos=0.25]  {$\cblue{\bigstar}$}
		 node[midway]  {${\blacklozenge}$} 
		 node[pos=0.75]  {$\cblue{\bigstar}$};
	    \draw[] (16,5) -- (15,0) node[pos=0.25] {$\cblue{\bigstar}$}		
         node[midway] {${\blacklozenge}$}		
       	 node[pos=0.75] {$\cblue{\bigstar}$};
       	 
       	\draw[green, thick] (20.2,4.2) circle (0.3); 		
	\end{tikzpicture}
	\vspace*{1.5mm}
	\caption{DoFs for $k=3$ (left) and $k=4$ (right). We denote $\boldsymbol{D^{\partial}_{\!I}}$ with the dots, $\boldsymbol{D^{\partial}_{\!I\!I}}$ with the circles, $\boldsymbol{D^{\partial}_{\!I\!I\!I}}$ with  the black diamonds, $\boldsymbol{D^{\partial}_{\!I\!V}}$ with the red triangles, $\boldsymbol{D^{\partial}_{\!V}}$ with  the blue stars, $\boldsymbol{D^{\partial}_{\!V\!I}}$ with the red squares, and $\boldsymbol{D^o}$ with the green circle.}
	\label{fig:dofs:loc}
\end{figure}

For all $ \E \in \O_h$, we have that the dimension of the local space $\VE$ is 
\begin{equation}\label{eq:dimension:VE}
	\dim(\Vh(\E)) = (3  +k_e +k_{\bn})N_e + \frac{(k-3)(k-2)}{2}.
\end{equation}

The following result generalizes \cite[Proposition 4.2]{BM13} to the case of polygons with curved
edges. Although the proof is quite predictable, we prefer to report it in detail for completeness.

\begin{theorem}
The sets of linear functionals $\boldsymbol{D}$ are a set of degrees of freedom for the space~$\VE$.
\end{theorem}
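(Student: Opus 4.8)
The plan is to establish the classical unisolvence argument: since the number of functionals in $\boldsymbol{D}$ equals $\dim(\VE)$ (as given in~\eqref{eq:dimension:VE}, counting $3$ vertex-related functionals from $\boldsymbol{D^{\partial}_{\!I}}$ and $\boldsymbol{D^{\partial}_{\!I\!I}}$ — recall $\nabla v_h$ contributes two components — plus $k_e$ edge-value functionals from $\boldsymbol{D^{\partial}_{\!I\!I\!I}}$--$\boldsymbol{D^{\partial}_{\!I\!V}}$, plus $k_{\bn}$ normal-derivative functionals from $\boldsymbol{D^{\partial}_{\!V}}$--$\boldsymbol{D^{\partial}_{\!V\!I}}$ on each of the $N_e$ edges, plus $(k-3)(k-2)/2$ internal moments from $\boldsymbol{D^o}$), it suffices to show that $v_h \in \VE$ with $\boldsymbol{D}(v_h) = 0$ implies $v_h = 0$.

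First I would show that the boundary functionals determine $v_h$ and $\partial_{\bn_{\E}} v_h$ completely on $\partial\E$. On each straight edge $e_i$, the trace $v_h|_{e_i}$ lies in $\P_r(e_i)$, a space of dimension $r+1$; the two endpoint values (from $\boldsymbol{D^{\partial}_{\!I}}$), the two endpoint tangential-derivative values (extracted from $\boldsymbol{D^{\partial}_{\!I\!I}}$, since the tangential component of $\nabla v_h$ at a vertex is a linear combination of the gradient components there), and the $k_e = \max\{0,k-3\}$ interior values at Gauss–Lobatto nodes (from $\boldsymbol{D^{\partial}_{\!I\!I\!I}}$) together furnish $4 + k_e$ conditions; since $r = \max\{k,3\}$ one checks $r+1 = 4 + k_e$ in both the case $k=3$ (where $r=3$, $k_e=0$) and the case $k\ge 4$ (where $r=k$, $k_e=k-3$), so $v_h|_{e_i} \equiv 0$. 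Hence $v_h$ vanishes on all of $\partial\E$ (consistency at shared vertices is automatic), which in turn forces the tangential derivative $\partial_{\bt_{\E}} v_h$ to vanish on $\partial\E$. Combined with the vanishing of $\nabla v_h$ at the vertices, the normal derivative $\partial_{\bn_{\E}} v_h|_{e_i} \in \P_{k-1}(e_i)$ has two vanishing endpoint values plus $k_{\bn} = k-2$ vanishing interior Gauss–Lobatto values (from $\boldsymbol{D^{\partial}_{\!V}}$), i.e.\ $k$ conditions on a space of dimension $k$, so $\partial_{\bn_{\E}} v_h \equiv 0$ on $\partial\E$ as well. For the curved edge $e_1$ the argument is identical after composing with the parametrization $\gamma$: $v_h|_{e_1}\circ\gamma \in \P_r(I_{e_1})$ and $(\partial_{\bn_{\E}} v_h)|_{e_1}\circ\gamma \in \P_{k-1}(I_{e_1})$, and the Gauss–Lobatto nodes on $I_{e_1}$ are mapped by $\gamma$ to the sampling points of $\boldsymbol{D^{\partial}_{\!I\!V}}$, $\boldsymbol{D^{\partial}_{\!V\!I}}$ — here one uses that the chain rule expresses $\partial_{\bt}(v_h\circ\gamma)$ and the vertex-gradient data consistently, exactly as in~\cite{BRV2019_M2AN}.

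With $v_h = \partial_{\bn_{\E}} v_h = 0$ on $\partial\E$, i.e.\ $v_h \in H^2_0(\E)$, I would integrate by parts twice against $\Delta^2 v_h \in \P_{k-4}(\E)$:
\begin{equation*}
\int_{\E} (\Delta^2 v_h)\, v_h \,{\rm d}\E = \int_{\E} |\D^2 v_h|^2 \,{\rm d}\E + (\text{boundary terms vanishing on } \partial\E).
\end{equation*}
The left-hand side is a linear combination of the internal moments $\boldsymbol{D^o}(v_h)$ (the monomials in $\M_{k-4}(\E)$ span $\P_{k-4}(\E)\ni \Delta^2 v_h$, up to the scaling $h_{\E}^{-2}$), hence vanishes; therefore $\D^2 v_h = 0$ on $\E$, so $v_h \in \P_1(\E)$. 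But a degree-one polynomial vanishing on $\partial\E$ together with its gradient is identically zero. This proves injectivity, and by the dimension count, unisolvence.

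The main obstacle is the curved-edge bookkeeping: one must verify carefully that the tangential-derivative information at the endpoints of $e_1$ — obtained via the chain rule from the Cartesian vertex gradients in $\boldsymbol{D^{\partial}_{\!I\!I}}$ — together with the mapped interior Gauss–Lobatto values, genuinely pins down the pulled-back trace $\widehat{v_h} = v_h \circ \gamma_{e_1} \in \P_r(I_{e_1})$, and similarly that the normal-derivative data locates $(\partial_{\bn_{\E}} v_h) \circ \gamma_{e_1} \in \P_{k-1}(I_{e_1})$; the key point, inherited from assumption $\mathbf{(A0)}$ and the $W^{1,\infty}$-regularity of $\gamma$ and $\gamma^{-1}$, is that $\gamma$ is a regular invertible parametrization so that Gauss–Lobatto nodes remain distinct under the mapping and the polynomial structure of the mapped traces is the one declared in~\eqref{local:virtual:space}. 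Everything else is the standard $H^2_0$-integration-by-parts trick already used for flat $C^1$ virtual elements in~\cite{BM13}.
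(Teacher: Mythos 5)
Your proposal is correct and follows essentially the same strategy as the paper: a dimension count reducing the claim to injectivity, determination of $v_h$ and $\partial_{\bn_\E} v_h$ on the boundary from the vertex/edge functionals (with the curved edge handled through the pulled-back polynomials and the chain rule, exactly as in the paper's case-by-case Hermite-basis argument), and then integration by parts against $\Delta^2 v_h \in \P_{k-4}(\E)$ using the internal moments. The only cosmetic differences are that you organize the edge argument as a single unisolvent Hermite-type interpolation count instead of the paper's explicit $k=2$, $k=3$, $k\geq 4$ case split, and you conclude via $\|\D^2 v_h\|_{0,\E}=0$ rather than $\|\Delta v_h\|_{0,\E}=0$, which are equivalent on $H^2_0(\E)$.
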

\begin{proof}
We observe that the cardinal of sets $\boldsymbol{D}$ is equal to the dimension of the space $ \VE$ (cf. \eqref{local:virtual:space} and  \eqref{eq:dimension:VE}), therefore it is sufficient to show that any virtual function with vanishing DoFs is the zero function. 
Using integration by parts, we have 
\begin{equation*}
\int_{\E} |\Delta v_h|^2 \: {\rm d}\E= \int_{\E} v_h\Delta^2 v_h  \:{\rm d}\E+ \int_{\partial\E} \partial_{\bn_\E}
 v_h\Delta v_h \:{\rm d}s- \int_{\partial\E} v_h \partial_{\bn_{\E}}\Delta v_h \: {\rm d}s. 
\end{equation*}

Following standard VEM arguments, the key point becomes to prove that $\boldsymbol{D^{\partial}_{i}}(v_h) = \mathbf{0}$, with $\boldsymbol{i}\in \{\boldsymbol{\!I}, \,\boldsymbol{\!I\!I}, \,\boldsymbol{\!I\!V},\, \boldsymbol{\!V\!I}\}$ imply that $v_h|_{e_1} \equiv 0$ and $\partial_{\bn_{\E}} v_h|_{e_1}\equiv 0$ (since for the straight edges and the volume part the proof is standard).
We first analyze the cases $k=2$ and $k=3$. 

For the case $k=2$, we have $r=\max\{3,k\}=3$. 
Thus, $v_h|_{e_1} \in \wP_3(e_1)$ and $\partial_{\nb_{\E}}v_h|_{e_1} \in \wP_1(e_1)$. 
We denote by $\vb_{e_1}^{{\rm i}}$ and $\vb_{e_1}^{{\rm f}}$ the initial and end points of $e_1$, respectively. 
Then, from the definition of normal derivative and $\boldsymbol{D^{\partial}_{\!I\!I}}$ we have 
\begin{equation*}
\partial_{\bn_{\E}} v_h(\vb_{e_1}^{{\rm i}})= \partial_{\bn_{\E}} v_h(\vb_{e_1}^{{\rm f}}) =0,   	
\end{equation*}
thus $\partial_{\bn_{\E}} v_h|_{e_1} \equiv 0$. Now, since $v_h|_{e_1} \in \wP_3(e_1)$, 
from definition of~\eqref{def:poli},  
we have
$$
v_h(\gamma({t})) =  \widetilde{q_3}(\gamma({t})) = \widehat{q_3}(t)  \qquad \forall t \in I_{e_1}, 
$$
where $\widehat{q_3} \in \P_3(I_{e_1})$. {Then, by applying the chain rule to functions of two variables, we obtain 
$$\frac{d\widehat{q_3}(t)}{dt}= \nabla v_h(\gamma(t)) \cdot \frac{ d\gamma(t)}{dt},$$ 
where $\nabla v_h$ is the gradient of $v_h$ and $\frac{d\gamma(t)}{dt}$ denotes the derivatives vector respect with the parameter $t$ of the $\gamma$-components.}

Now, letting $\{H_1,H_2,H_3,H_4\}$ be the four polynomials 
of the Hermite basis, then  by employing the above facts, we get
\begin{equation*}
\begin{split}
\widehat{q_3}(t) &= \widehat{q_3}(a_1)H_1(t) + {\frac{d\widehat{q_3}(a_1)}{dt}H_2(t)}
+ \widehat{q_3}(b_1)H_3(t) + { \frac{\widehat{q_3}(b_1)}{dt}H_4(t)}\\
&= v_h(\gamma({a_1}))H_1(t)+  {\nabla v_h(\gamma(a_1)) \cdot \frac{ d\gamma(a_1)}{dt}} H_2(t) + v_h(\gamma(b_1))H_3(t) + {\nabla v_h(\gamma(b_1)) \cdot \frac{ d\gamma(b_1)}{dt}} H_4(t)\\
&= v_h(\gamma({a_1}))H_1(t)+  \nabla v_h(\vb_{e_1}^{{\rm i}}) \cdot \frac{ d\gamma(a_1)}{dt} H_2(t) + v_h(\gamma(b_1))H_3(t) + \nabla v_h(\vb_{e_1}^{{\rm f}}) \cdot \frac{ d\gamma(b_1)}{dt} H_4(t),
\end{split}
\end{equation*} 
where we have used that $\gamma({a_1})=\vb_{e_1}^{{\rm i}}$ and $\gamma({b_1})=\vb_{e_1}^{{\rm f}}$. Therefore, by employing that $\boldsymbol{D^{\partial}_{\!I}} (v_h)=\boldsymbol{D^{\partial}_{\!I\!I}} (v_h)=\0$, it implies $\widehat{q_3} = 0$, and thus $v_h|_{e_1} \equiv 0$.

For the case $k=3$, we have $v_h|_{e_1} \in \wP_3(e_1)$ and $\partial_{\nb_{\E}}v_h|_{e_1} \in \wP_2(e_1)$. 
Now, let  $\{t_j\}_{j=1}^{3}$ represent the nodes of the  $3$-point  Gauss-Lobatto quadrature on $I_{e_1}$.
Then, by using the definition~\eqref{def:poli} and $\boldsymbol{D^{\partial}_{\!V\!I}}(v_h)=0$, we have
\begin{equation*}
\begin{split}
0=\partial_{\bn_\E} v_h(\gamma({t_2})) =  \widetilde{q_2}(\gamma({t_2})) = \widehat{q_2}(t_2),
\end{split}
\end{equation*}
where $\widehat{q_2} \in \P_2(I_{e_1})$. Moreover, as in the case $k=2$, we have $\partial_{\bn_\E} v_h(\vb_{e_1}^{{\rm i}})= \partial_{\bn_\E} v_h(\vb_{e_1}^{{\rm f}}) =0$,
and using $\gamma({a_1})=\vb_{e_1}^{{\rm i}}$, $\gamma({b_1})=\vb_{e_1}^{{\rm f}}$, we conclude $\widehat{q_2}(t_i)=0$, for $i=1,2,3$. Therefore $\widehat{q_2}\equiv 0$, that implies $\partial_{\bn_\E} v_h|_{e_1}\equiv 0$. The fact $v_h|_{e_1} \equiv 0$ follows exactly as in case $k=2$.

So far we have shown $v_h|_{e_1} \equiv 0$ and $\partial_{\bn_{\E}} v_h|_{e_1}\equiv 0$  for $k=2$ and $k=3.$
Now, we will consider the case $k \geq 4$. In this case, we have $r=k$, so $v_h|_{e_1}\in \wP_{k}(e_1)$ 
and $\partial_{\bn_\E} v_h|_{e_1}\in \wP_{k-1}(e_1)$. By definition, there exists
$\widehat{q}_k \in \P_k(I_{e_1})$, such that  
\begin{equation*}
v_h(\gamma({t})) =  \widetilde{q_k}(\gamma({t})) = \widehat{q_k}(t)  \qquad \forall t \in I_{e_1}, 	
\end{equation*} 

Let $\{f_1,f_2,\ldots,f_{n_k}\}$ be the canonical basis associated to the sets 
$\boldsymbol{D^{\partial}_{\!I}}, \boldsymbol{D^{\partial}_{\!I\!I}}$ and 
$\boldsymbol{D^{\partial}_{\!I\!V}}$, where $n_k = \dim(\P_k(I_{e_1}))$, such that the canonical functions $f_1,f_2,f_{n_k-1}$ and $f_{n_k}$ are associated to the canonical coefficients $\widehat{q_k}(a_1),$ 
${\frac{d\widehat{q_k}(a_1)}{dt}},$ $\widehat{q_k}(b_1)$ and ${ \frac{d\widehat{q_k}(b_1)}{dt}}$, respectively. Thus, as in the case $k=2$, for all $t \in I_{e_1}$, from the chain rule we have
 \begin{equation*}
	\begin{split}
		\widehat{q_k}(t) &= \widehat{q_k}(a_1)f_1(t) +{ \frac{d\widehat{q_k}(a_1)}{dt}}f_2(t) +
		\alpha_{3}f_3(t) + \cdots+ \alpha_{n_k-2}f_{n_k-2}(t)+\widehat{q_k}(b_1)f_{n_k-1}(t)+ {  \frac{d\widehat{q_k}(b_1)}{dt}}f_{n_k}(t)\\
		&= v_h(\gamma({a_1}))f_1(t)+ { \nabla v_h(\gamma(a_1)) \cdot \frac{ d\gamma(a_1)}{dt}}+ \alpha_{3}f_3(t) + \cdots+ \alpha_{n_k-2}f_{n_k-2}(t)\\
		& \qquad \quad + v_h(\gamma(b_1))f_{n_k-1}(t) + { \nabla v_h(\gamma(b_1)) \cdot \frac{ d\gamma(b_1)}{dt}} f_{n_k}(t).
	\end{split}
\end{equation*}  

Thus, by using $\boldsymbol{D^{\partial}_{\!I}}(v_h)=\boldsymbol{D^{\partial}_{\!I\!I}}(v_h)=\boldsymbol{D^{\partial}_{\!I\!V}}(v_h)=\0$, we obtain that $\widehat{q_k} \equiv 0$, hence $v_h|_{e_1} \equiv 0$. The fact $\partial_{\bn} v_h|_{e_1}\equiv 0$ follows from the same arguments used in the case $k=3$.
\end{proof}

The global virtual element space is constructed by combining the local spaces $\VE$ 
and incorporating the homogeneous Dirichlet boundary conditions, i.e., for every decomposition $\CT_h$ into polygons $\E$, we define   
\begin{equation}\label{glob:VEMspace}
\Vh = \{ v_h \in V : \: v_h|_{\E} \in \VE \quad \forall \E \in \CT_h  \},
\end{equation}
with the obvious associated sets of global degrees of freedom.

We notice that the unisolvency of global DoFs follows from the unisolvency of the local degrees of freedom and the definition of the local and global VE spaces.

\subsection{Polynomial projectors and discrete forms}

In this subsection we introduce some polynomial projections and the discrete bilinear
form approximating the continuous form~\eqref{cont:form}.  Moreover, we propose an approximation for the load term.

Note that the practical implementation of the projection operators introduced below requires to compute integrals of known functions (polynomials, mapped polynomials or data) on curved edges and on curved polygons. We accomplish these either by standard 1D quadrature or by suitable generalized rules, see for instance~\cite{SV2007,SV2009,CS2021}.

As usual, we decompose into local contributions the bilinear form $a(\cdot, \, \cdot)$ defined in~\eqref{cont:form} as follows 
\begin{equation*}
	a (u,v) = \sum_{E \in \Omega_h} a_E (u,v) \qquad  \forall u, v \in {V}.
\end{equation*}

\paragraph{Polynomial projections.} 
Next, we shall construct a VEM $H^2$-projection.  Indeed, for each polygon $\E$,  we define the  projector
$\PiK\colon H^2(\E)  \to \P_k(\E),$ as the solution of the local problems:
	\begin{equation}\label{Ritz:operator}
	\left\{ 	\begin{array}{ll}
	a_{\E}( v_h-\PiK v_h, q_k ) \qquad \forall  v_h \in H^2(\E) \quad \text{and} \quad q_k \in \mathbb{P}_k(\E),\\[2ex]
\Pi^0_\dE(v_h-\PiK v_h)=0, \quad 
\Pi^0_\dE(\nabla (v_h -\PiK v_h))=0,
	\end{array} \right.
\end{equation}
where  the operator $\Pi^0_\dE: H^2(\E) \to \P_0(\E)$ is given by:
\begin{equation}\label{averag:operator}
\Pi^0_\dE\varphi := |\dE|^{-1} \int_{\dE} \varphi  \ds.
\end{equation}

Furthermore, for each $m \in \N \cup \{0\}$ we consider the usual $L^2$-projection 
$\Pi_{\E}^{m}: L^2(\E)  \to \P_{m}(\E)$, defined by 
\begin{equation}\label{L2:proy:m}
	\int_{\E} p_m(v-  \Pi_{\E}^{m} v) \:{\rm d}\E =0  	\qquad 	\forall  v \in L^2(\E) \quad \text{and} 
	\quad\forall p_m\in\P_{m}(\E).
\end{equation}

The following result is easy to check by standard VEM arguments (see for instance~\cite{BM13}).
\begin{proposition}
For each $k \geq 2$ the operator $\PiK \colon \VE  \to \P_k(\E)$  defined in \eqref{Ritz:operator}
is computable from the degrees of freedom $\boldsymbol{D}$. Moreover, for $k \geq 4$ the  
$L^2$-projection  $\Pi_{\E}^{k-4}\colon \VE  \to \P_{k-4}(\E)$ defined in~\eqref{L2:proy:m} is computable from the 
degrees of freedom $\boldsymbol{D^{o}}$.
\end{proposition}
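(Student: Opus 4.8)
The plan is to run the standard VEM computability argument of \cite{BM13}, checking that each step survives the presence of the curved edge. By \eqref{Ritz:operator}, $\PiK v_h$ is the element of $\P_k(\E)$ determined by $a_\E(\PiK v_h,q_k)=a_\E(v_h,q_k)$ for all $q_k\in\P_k(\E)$, together with the two conditions $\Pi^0_\dE(\PiK v_h)=\Pi^0_\dE(v_h)$ and $\Pi^0_\dE(\nabla\PiK v_h)=\Pi^0_\dE(\nabla v_h)$, which fix the $\P_1(\E)$-component that $a_\E$ leaves undetermined. Since $\M_k(\E)$ is an explicit basis, it suffices to evaluate $a_\E(v_h,q_k)$, $\Pi^0_\dE(v_h)$ and $\Pi^0_\dE(\nabla v_h)$ from $\boldsymbol{D}$. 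The first step is a two-fold integration by parts that moves every derivative onto the polynomial argument: for $v_h\in H^2(\E)$ and $q_k\in\P_k(\E)$,
\[ a_\E(v_h,q_k)=\int_\E v_h\,\Delta^2 q_k\,{\rm d}\E+\int_{\partial\E}\Big[(\partial^2_{\bn_{\E}\bn_{\E}}q_k)\,\partial_{\bn_{\E}}v_h+(\partial^2_{\bn_{\E}\bt_{\E}}q_k)\,\partial_{\bt_{\E}}v_h-(\partial_{\bn_{\E}}\Delta q_k)\,v_h\Big]{\rm d}s . \]
All coefficients produced by $q_k$ are explicit; moreover $\Delta^2 q_k\in\P_{k-4}(\E)$, so the volume term is $0$ for $k=2,3$ and, for $k\ge 4$, a linear combination of the internal moments $\boldsymbol{D^o}$ obtained by expanding $\Delta^2 q_k$ in $\M_{k-4}(\E)$. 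Everything therefore reduces to recovering the three boundary fields $v_h|_{\partial\E}$, $\partial_{\bn_{\E}}v_h|_{\partial\E}$ and $\partial_{\bt_{\E}}v_h|_{\partial\E}$ from the degrees of freedom.

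I would recover these edge by edge. On a straight edge $e_i$, $v_h|_{e_i}\in\P_r(e_i)$ is the Hermite-type interpolant fixed by its two endpoint values ($\boldsymbol{D^{\partial}_{\!I}}$), its two endpoint tangential derivatives (obtained from the vertex-gradient DoFs $\boldsymbol{D^{\partial}_{\!I\!I}}$) and the $k_e$ interior values $\boldsymbol{D^{\partial}_{\!I\!I\!I}}$ — exactly $r+1$ data — so $v_h|_{e_i}$, and hence $\partial_{\bt_{\E}}v_h|_{e_i}$ by differentiation, is computable; likewise $\partial_{\bn_{\E}}v_h|_{e_i}\in\P_{k-1}(e_i)$ is fixed by its two endpoint values (from $\boldsymbol{D^{\partial}_{\!I\!I}}$) and the $k_{\bn}$ interior values $\boldsymbol{D^{\partial}_{\!V}}$. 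On the curved edge $e_1$ the same holds after composing with $\gamma$: $v_h\circ\gamma=\widehat{q}_r\in\P_r(I_{e_1})$ is determined by the corresponding data in $\boldsymbol{D^{\partial}_{\!I}}$, $\boldsymbol{D^{\partial}_{\!I\!I}}$ (the endpoint derivatives recovered through the chain rule $\widehat{q}_r'=\nabla v_h\cdot\gamma'$, exactly as in the unisolvency proof) and $\boldsymbol{D^{\partial}_{\!I\!V}}$, while $\partial_{\bn_{\E}}v_h\circ\gamma=\widehat{p}_{k-1}\in\P_{k-1}(I_{e_1})$ is determined by $\boldsymbol{D^{\partial}_{\!I\!I}}$ and $\boldsymbol{D^{\partial}_{\!V\!I}}$; the tangential derivative along $e_1$ is then $\widehat{q}_r'(t)/\|\gamma'(t)\|$, and the boundary integrals over $e_1$ are pulled back to $I_{e_1}$ and computed by the 1D or generalized quadratures mentioned above. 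Finally $\Pi^0_\dE v_h=|\partial\E|^{-1}\int_{\partial\E}v_h\,{\rm d}s$ and $\Pi^0_\dE(\nabla v_h)=|\partial\E|^{-1}\int_{\partial\E}\big((\partial_{\bn_{\E}}v_h)\bn_{\E}+(\partial_{\bt_{\E}}v_h)\bt_{\E}\big){\rm d}s$ involve only the same three fields. Collecting, every right-hand side in \eqref{Ritz:operator} is computable from $\boldsymbol{D}$, so $\PiK v_h$ is computable.

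For $\Pi_{\E}^{k-4}$ (with $k\ge 4$) the claim is immediate: testing \eqref{L2:proy:m} with $p_m=m_j\in\M_{k-4}(\E)$ gives $\int_\E m_j\,\Pi_{\E}^{k-4}v_h\,{\rm d}\E=\int_\E m_j\,v_h\,{\rm d}\E=h_\E^2\,\boldsymbol{D}_j^{\boldsymbol{o}}(v_h)$, so the moments of $\Pi_{\E}^{k-4}v_h$ against the basis $\M_{k-4}(\E)$ — hence $\Pi_{\E}^{k-4}v_h$ itself — are read directly from $\boldsymbol{D^o}$. The only genuinely delicate point is the double integration by parts: one must check that both integrations push all derivatives onto $q_k$ and leave on $\partial\E$ only $v_h$, $\partial_{\bn_{\E}}v_h$ and $\partial_{\bt_{\E}}v_h$, with no uncontrolled second-order trace of $v_h$ surviving, and, in the curved case, that differentiating the mapped edge polynomial through $\gamma$ does produce exactly the tangential derivative; once these are in place, the rest is the familiar bookkeeping from the straight-edge setting.
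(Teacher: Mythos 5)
Your argument is correct and is exactly the ``standard VEM argument'' the paper invokes without proof (it only cites \cite{BM13}): double integration by parts onto $q_k$, recovery of the three boundary fields edge by edge from the (mapped) polynomial traces and the DoF counts, use of $\boldsymbol{D^o}$ for the volume term, and quadrature on the curved edge. The dimension counts ($4+k_e=r+1$ for the trace, $2+k_{\bn}=k$ for the normal derivative) and the chain-rule recovery of the tangential derivative through $\gamma$ are all handled correctly, so nothing further is needed.
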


\paragraph{Discrete bilinear forms.} The next step consists in constructing a computable approximation of the bilinear form~\eqref{cont:form} with the tools given above. Indeed, first we define the following local bilinear form. Recalling Remark~\ref{remark:non:poly:containing}, we consider the sum space 
\begin{equation}\label{keyspace}
\VVE:= \VE + \P_k(\E). 
\end{equation}

Then, we set the following  bilinear form
\begin{equation*} 
a^h_E\colon \VVE \times \VVE \to \R,
\end{equation*}
approximating the local continuous form $a_E(\cdot, \, \cdot)$, and defined by
\begin{equation*}
	a^h_E(v_h,w_h) := a_E \big(\PiK v_h, \PiK w_h \big) + \mathcal{S}_E \big((I - \PiK) v_h, (I -\PiK) w_h \big) \qquad \forall v_h,w_h \in \VVE,
\end{equation*} 
where the form $\mathcal{S}_E \colon \VVE \times  \VVE \to \R$ is an adequate symmetric stabilizing bilinear form, which is computable by using the degrees of freedom $\boldsymbol{D}$.

In the present approach we need to introduce the following new stabilizing form (see Remarks 
\ref{remark:non:poly:containing} and \ref{Remark:new:stab}):
\begin{equation}\label{eq:new:stab:form}
	\begin{split}
\mathcal{S}_E ( v_h, \,  w_h) &:= h^{-2}_{\E}  \sum_{j=1}^{(k-3)(k-2)/2} \boldsymbol{D}^{\boldsymbol{\circ}}_j( v_h)\boldsymbol{D}^{\boldsymbol{\circ}}_j( w_h)
+\sum_{i=1}^{3N_{\E} } h^{-2}_{\vb_i}  \boldsymbol{D}^{\boldsymbol{\partial}, \vb}_i( v_h)\boldsymbol{D}^{\boldsymbol{\partial},\vb}_i( w_h)\\
&\quad + h_{\E} \sum_{i=1}^{N_e} \big( (\partial_{\tb_{\E}} (\partial_{\bn_\E} v_h),\partial_{\tb_{\E}} (\partial_{\bn_\E} w_h))_{0,e_i}  +(\partial^2_{\bt_\E} v_h,\partial^2_{\tb_{\E}}  w_h)_{0,e_i}\big)  
\qquad \forall  v_h,  w_h\in \VVE,		
	\end{split}
	\end{equation} 
where $\boldsymbol{D}^{\boldsymbol{\partial},\vb}$ is the vector of DoFs associated with the boundary DoFs 
$\boldsymbol{D^{\partial}_{\!I}}$ and $\boldsymbol{D^{\partial}_{\!I\!I}}$. 

We observe that this bilinear form  can be rewritten as the sum of three terms:  the first, $\mathcal{S}^{\boldsymbol{\circ}}_E$, involving the internal DoFs, the second $\mathcal{S}^{\boldsymbol{\partial},\vb}_E$  involving the vertex boundary DoFs, and the third $\mathcal{S}^{\boldsymbol{\partial},e}_E $ involving edges integrals, as follows
\begin{equation}\label{eq:stab:form}
\begin{split}
\mathcal{S}_E ( v_h, \,  w_h) = \mathcal{S}^{\boldsymbol{\circ}}_{\E} ( v_h, \,  w_h) + \mathcal{S}^{\boldsymbol{\partial},\vb}_{\E} ( v_h, \,  w_h)  + \mathcal{S}^{\boldsymbol{\partial},e}_{\E} ( v_h, \,  w_h) 
  \qquad \forall  v_h,   w_h \in \VVE,
\end{split}
\end{equation}
where the space $\VVE$ is defined in \eqref{keyspace}.

Finally we introduce the global bilinear form $a^h:\VVh\times\VVh \to \R$ given by
\begin{equation}\label{global:ah}
	a^h(u_h,v_h) = \sum_{E \in \CT_h} a^h_E (u_h,v_h) \qquad  \forall u_h, v_h \in \VVh,
\end{equation}
where global space $\VVh$ is defined by
\begin{equation*}
\VVh:=  \Vh + \P_k(\O_h). 	
\end{equation*}

\begin{remark}\label{Remark:new:stab}
The new stabilizing form $\mathcal{S}_{\E}(\cdot, \cdot)$ defined in~\eqref{eq:new:stab:form} is fully computable by using the DoFs $\boldsymbol{D}$. Furthermore, differently from more standard choices, such as the ``\emph{dofi-dofi}'' stabilization~\cite{BBCMMR2013,BM13}, the form \eqref{eq:new:stab:form} satisfies the coercivity property also in the sum space $\VE+\P_1(\E)$  (see Lemma~\ref{lemma:coercivity:S_E}).
Such sum space is complex to handle since the restriction to curved edges of $\P_1(\E)$ may lead to very general functions; thus standard VEM arguments based on inverse estimates on edges become very challenging, if not impossible, for the space $\VE+\P_1(\E)$.
Some intuition on the difficulties in proving the coercivity property for the ``\emph{dofi-dofi}'' stabilization can be gained by the following observation.

Letting $\eta = \textrm{dim}(\VE)$, the standard ``\emph{dofi-dofi}'' stabilization is essentially a scaled and squared norm on $\| \textrm{DOF}(v_h) \|$, ${v}_h \in \VE$, where $\textrm{DOF} \in {\mathbb R}^\eta$ represent the vector collecting all the local DoF values. Therefore $\textrm{DOF}$ is an application from an $\eta$ dimensional space into ${\mathbb R}^\eta$. When extended to $\VE + \P_1(E)$ (which has a dimension higher than $\eta$ since for curved elements $\P_1(E) \nsubseteq\VE$) such application has no hope of being injective; thus $\| \textrm{DOF}({v}_h) \|$ will not be a norm on $\VE + \P_1(E)$. By inverse estimates and scaling arguments on edges, the novel form $\mathcal{S}_{\E}(\cdot, \cdot)$ is indeed equivalent to the dofi-dofi on $\VE$, but the two stabilizing forms may not equivalent on larger spaces such as $ \VE+\P_1(E)$.
\end{remark}

\paragraph{Load term approximation.}
Here we shall define the discrete load term. 
As in \cite{BM13}, we will consider different approximations for the load term, 
depending on the polynomial degree $k$. Indeed, first we will introduce some useful notations:
\begin{equation*}
 f_h|_{\E} :=\begin{cases}
\Pi_{\E}^{k-2} f &\text{if  $ \, k =2,3$},\\
\Pi_{\E}^{1} f &\text{if  $ \, k =4$},\\
\Pi_{\E}^{k-4} f &\text{if  $ \, k >4$},
	\end{cases}
\end{equation*}
\begin{equation*}
\widehat{\Pi}_\E^1 v_h :=\begin{cases}
 \widehat{v_h} + (k-2)(\xb-\xb_{\E}) \cdot  \widehat{\nabla v_h} &\text{if  $\, k =2,3$},\\
\Pi_{\E}^{0} v_h + (\xb-\xb_{\E}) \cdot  \widehat{\nabla v_h} &\text{if  $ \, k =4$},\\
	\end{cases}
\end{equation*}
where 
\begin{equation*}
\widehat{w_h} = \frac{1}{N_E}\sum_{i=1}^{N_E} w_h(\vb_i), \quad \text{with  $\vb_i, 1 \leq i \leq N_\E$ are the vertices of  $\E$.}
\end{equation*}

Then, the right-hand side is defined by
\begin{equation}\label{global:load:term}
\langle f_h, v_h \rangle :=\begin{cases}
\sum_{\E \in \CT_h}	(f_h, \widehat{\Pi}_\E^1 v_h)_{0,\E} & \text{if  $\, k = 2,3,4$},\\
\sum_{\E \in \CT_h}	(f_h,  v_h)_{0,\E} &\text{if  $ \, k >4$}.
	\end{cases}
\end{equation}

We observe that the right-hand side defined above is computable by using the DoFs $\boldsymbol{D}$. 

\begin{remark}\label{Remark:L2-H1:norms}
By combining the ideas in~\cite{AABMR13} with the advancements presented here, it is possible to design enhanced 
versions of the local spaces presented in~\eqref{local:virtual:space}, allowing to compute exactly also the higher-order projection $\Pi_{\E}^{k-2} \colon \VE \to \P_{k-2}(\E)$, for $k \geq 2$.  In this case, we can  simply take $ f_h|_\E :=\Pi_{\E}^{k-2},$ for all $\E \in \O_h$ in order to derive optimal estimates in $L^2$- and $H^1$-norms.  We refer to~\cite{ChM-camwa,ABSV2016} for enhanced versions of the $C^1$-VEM with straight edges.  
\end{remark}

\paragraph{The virtual element formulation.}
With the above definitions we are in a position to write the conforming discrete 
VE formulation. The  VE problem reads as: find $u_h\in\Vh$, such that
 \begin{equation}\label{discrete:problem}
 a^h(u_h,v_h)= \langle f_h,v_h \rangle \qquad \forall v_h\in\Vh,
 \end{equation}
 where the bilinear form $a^{h}(\cdot,\cdot)$ and discrete load term $f_h$
 are defined in \eqref{global:ah} and \eqref{global:load:term}, respectively.

The well-posedness of discrete problem~\eqref{discrete:problem} follows from the fact that, by construction, the bilinear form $a^{h}(\cdot,\cdot)$ is symmetric and coercive on the space $\Vh$. This last fact will be rigorously investigated in Section~\ref{STAB:SECTION}.

We point out that if $\O$ is a domain with  polygonal boundary $\Gamma$, i.e., if $\Gamma$ is made up of a finite number of straight sides, and the stabilizing form $\mathcal{S}_{\E}(\cdot,\cdot)$ is defined as the classical ``\emph{dofi-dofi}'' stabilization, then we recover the $C^1$-VEM proposed in~\cite{BM13} apart for the choice of the stabilization. 

\begin{remark}
We again underline that the present approach (and analysis) is immediately applicable to the case of curved internal interfaces since the provided DoFs guarantee the global continuity of the discrete functions also across curved edges. Assuming curved edges only on the boundary $\Gamma$ simplifies the exposition.
\end{remark}
\setcounter{equation}{0}	
\section{Theoretical analysis}\label{THEORIC:SECTION}
This section is devoted to the interpolation, stability and convergence analysis. For a regular enough function we will construct an interpolant in the virtual element space $\Vh$ and prove optimal approximation properties. Moreover, we present stability bounds for the stabilization \eqref{eq:stab:form}. Finally, 
with the help of these previous properties, we provide the well-posedness and a priori error estimates for the proposed method.  
\subsection{Preliminary results}

We start by reviewing some preliminary results, which will be useful in the forthcoming sections. 
\begin{lemma}\label{Lemma:poli:approx}
Let $k \in \mathbb{N}$.
Then, for all $E \in \CT_h$ and $v \in H^s(E)$, there exists a  polynomial function $v_{\pi} \in \P_k(E)$,  such that
\begin{equation*}
| v - v_{\pi} |_{r, \E} \lesssim \, h_E^{s-r}| v - v_{\pi} |_{s, E} 
\lesssim \, h_E^{s-r}| v |_{s, E},
\end{equation*}
for all $r, s \in \R$ satisfying $0 \leq r \leq s \leq k+1$, where the hidden constant depend only on $k$ and the shape regularity constant $\rho$ 
	(cf. Assumption $\mathbf{(A1)}$).
\end{lemma}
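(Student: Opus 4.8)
This is the classical Bramble--Hilbert / Dupont--Scott polynomial approximation result on star-shaped domains. The plan is to combine the Bramble--Hilbert lemma for integer Sobolev orders with a scaling argument to track the powers of $h_E$, and then to extend to non-integer $r,s$ by interpolation theory.

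\textbf{Step 1: Integer case via Bramble--Hilbert.} First I would treat the case where $s$ is a positive integer with $1 \le s \le k+1$. Assumption $\mathbf{(A1)}$ guarantees that $E$ is star-shaped with respect to a ball $B_E$ of radius $\ge \rho\, h_E$; this is exactly the geometric hypothesis under which the averaged Taylor polynomial (Dupont--Scott theory, or the classical Bramble--Hilbert lemma) applies with constants depending only on the chord--radius ratio, hence only on $\rho$ and $k$. I would let $v_\pi = Q^s v \in \P_{s-1}(E) \subseteq \P_k(E)$ be the Taylor polynomial of degree $s-1$ averaged over $B_E$. The Bramble--Hilbert lemma then gives $|v - v_\pi|_{r,E} \lesssim h_E^{\,s-r}\,|v|_{s,E}$ for all integers $0 \le r \le s$, with hidden constant depending only on $\rho$ (and $s \le k+1$). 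This simultaneously yields both inequalities in the statement: the rightmost one since $|v|_{s,E} = |v - v_\pi|_{s,E}$ when $v_\pi$ is a polynomial of degree $\le s-1$ (so $\D^s v_\pi = 0$), and the leftmost one by writing $|v - v_\pi|_{r,E} \lesssim h_E^{\,s-r} |v|_{s,E} = h_E^{\,s-r}|v-v_\pi|_{s,E}$.

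\textbf{Step 2: Non-integer orders by interpolation.} For general real $0 \le r \le s \le k+1$, I would proceed by space interpolation. Fixing the polynomial $v_\pi$ chosen for $\lceil s \rceil$ (or handling $r$ and $s$ separately), the operator $v \mapsto v - v_\pi$ (equivalently $I - Q$, with $Q$ a fixed projection onto a polynomial space) is bounded from $H^{\lfloor s\rfloor}(E)$ and from $H^{\lceil s \rceil}(E)$ into $H^{\lfloor r\rfloor}(E)$ and $H^{\lceil r \rceil}(E)$ with the bounds from Step 1; applying the K-method or real interpolation (as in \cite{AF2003}) between these endpoint estimates yields the claimed bound for fractional $r,s$. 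One must check that the interpolation constants remain scale-robust; this follows because the estimates of Step 1 are already in the correct scaled form, so interpolating preserves the power $h_E^{\,s-r}$. The case $s = 0$ is trivial ($v_\pi = 0$ works, or the statement is vacuous for $r=s=0$).

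\textbf{Main obstacle.} The only genuinely delicate point is making sure all constants are \emph{uniform in $h$ and in the mesh}, i.e.\ depend solely on $k$ and $\rho$. This is where Assumption $\mathbf{(A1)}$ is essential: on a domain star-shaped with respect to a ball of radius comparable to its diameter, the averaged Taylor polynomial construction has constants controlled purely by the ratio $\mathrm{diam}(E)/\mathrm{radius}(B_E) \le 1/\rho$; a naive scaling to a reference element would fail for general polygons since there is no single reference configuration. I would therefore cite the Dupont--Scott / Brenner--Scott framework for the star-shaped estimate rather than attempting a reference-element pullback, and then only the fractional-order extension requires the short interpolation argument above. I expect this to be a brief proof, as the result is standard; the write-up mainly needs to point to the correct statement of Bramble--Hilbert on star-shaped domains and note the interpolation step for fractional orders.
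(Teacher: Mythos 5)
Your proof is correct and is precisely the classical Dupont--Scott/Bramble--Hilbert argument (averaged Taylor polynomial on star-shaped domains for integer orders, then real interpolation for fractional orders) that the paper implicitly relies on: the paper states this lemma as a standard preliminary result without proof. Your identification of the uniformity of constants in $\rho$ as the only delicate point, and your observation that $|v-v_\pi|_{s,E}=|v|_{s,E}$ when $\deg v_\pi \le \lceil s\rceil-1$, are exactly right.
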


We have the following trace estimate for curved domains, which is a particular case of \cite[Theorem 3.4]{BRV2019_M2AN}, 
with $\varepsilon =0$.
\begin{lemma}
\label{lemma:trace:BRV}
Under the assumptions $\mathbf{(A0)}-\mathbf{(A2)}$,  
for all $E \in \Omega_h$ and $v \in H^{1}(E)$, it holds
\begin{equation*}
| v|_{1/2, \partial E} \lesssim  |v|_{1, E} \,.
\end{equation*}
The hidden constant in the previous estimate depends only on the shape regularity constant $\rho$ and $\|\gamma'\|_{L^{\infty}}$, in particular such constant does not depend on $h_E$.
\end{lemma}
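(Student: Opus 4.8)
The plan is to obtain the bound from \cite[Theorem 3.4]{BRV2019_M2AN} (the case $\varepsilon=0$), checking that its hypotheses are ensured by $\mathbf{(A0)}$--$\mathbf{(A2)}$ and keeping track of the constant; we sketch the underlying self-contained argument.

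Since both $|\cdot|_{1/2,\partial E}$ and $|\cdot|_{1,E}$ vanish on constants, we may replace $v$ by $v-\bar v$ with $\bar v:=|E|^{-1}\int_E v$; combined with the scaled Poincar\'e--Wirtinger bound $\|v-\bar v\|_{0,E}\lesssim h_E\,|v|_{1,E}$ (uniform in $h$ by $\mathbf{(A1)}$), the claim reduces to a \emph{scale-invariant} trace inequality, which we transport to a reference configuration $\widehat E$ of unit diameter. The transport map $\Phi:\widehat E\to E$ is constructed so as to carry a fixed straight reference edge onto the curved edge $e$ by composition with $\gamma$, and to map the polygonal part through the fan decomposition centered at the center of the ball $B_E$ of Assumption~$\mathbf{(A1)}$; by $\mathbf{(A1)}$--$\mathbf{(A2)}$ and Remark~\ref{length:edges}, the bi-Lipschitz norms $\|\Phi\|_{W^{1,\infty}}$ and $\|\Phi^{-1}\|_{W^{1,\infty}}$ are controlled solely by $\rho$ and $\|\gamma'\|_{L^\infty}$, so $h_E$ factors out.

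On $\widehat E$ the classical trace theorem and Poincar\'e--Wirtinger give $|\widehat w|_{1/2,\partial\widehat E}\lesssim|\widehat w|_{1,\widehat E}$. Pulling back, the $H^1$ seminorm transforms through the change of variables by the chain rule and the two-sided Jacobian bounds, while the Gagliardo seminorm $|\cdot|_{1/2}$ on $\partial E$ is comparable to that of its pullback under the induced bi-Lipschitz map of $\partial \widehat E$, with a constant governed by $\|\gamma'\|_{L^\infty}$ and the Lipschitz bound of its inverse; composing these equivalences yields $|v-\bar v|_{1/2,\partial E}\lesssim|v-\bar v|_{1,E}=|v|_{1,E}$. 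The delicate step is the uniform control of the comparison of the fractional boundary seminorm under $\Phi$ --- equivalently, producing $\Phi$ with $h$-independent bi-Lipschitz constants even when the curved edge is close to degenerate --- and this is precisely where $\mathbf{(A0)}$ together with the mesh regularity is used; it is what makes Theorem~3.4 of \cite{BRV2019_M2AN} a genuine result rather than a routine scaling.
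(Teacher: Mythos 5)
Your proposal is correct and matches the paper exactly where it matters: the paper gives no proof of this lemma, simply quoting it as the $\varepsilon=0$ case of \cite[Theorem 3.4]{BRV2019_M2AN}, which is precisely your starting point. Your additional scaling/bi-Lipschitz sketch of the underlying argument is a reasonable outline of how that cited theorem is established and correctly identifies where $\mathbf{(A1)}$--$\mathbf{(A2)}$ and the uniform $W^{1,\infty}$ control of $\gamma$ and $\gamma^{-1}$ (cf.\ Remark~\ref{length:edges}) enter to make the constant independent of $h_E$.
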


The following result establishes the extension of the scaled trace theorem over straight to curved polygons.
\begin{lemma}\label{lemma:scaled:trace}
For all $E \in \CT_h$, the following scaled trace inequalities hold
\begin{equation*}
\begin{split}
\| v \|^2_{0,\dE}  &\lesssim \, h_{\E}^{-1} \|v\|^2_{0,\E} +  h_{\E}|v|^2_{1,\E} \quad \: \: \: \forall v \in H^1(\E),\\
|v|^2_{1,\partial \E}  &\lesssim \,  h^{-1}_{\E}|v|^2_{1,\E} + |v|^2_{3/2,\E} \qquad \forall v \in H^{3/2}(\E),
\end{split}	
\end{equation*}
where the hidden constants depend only on the shape regularity constant $\rho$ and $\gamma'$ (in particular it does not depend on $h_\E$).
\end{lemma}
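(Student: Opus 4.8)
The goal is to prove the scaled trace inequalities on a (possibly) curved polygon $\E$, namely
\[
\| v \|^2_{0,\dE} \lesssim h_{\E}^{-1} \|v\|^2_{0,\E} + h_{\E}|v|^2_{1,\E}, \qquad
|v|^2_{1,\partial \E} \lesssim h^{-1}_{\E}|v|^2_{1,\E} + |v|^2_{3/2,\E}.
\]
The natural plan is a scaling argument. First I would rescale the element by setting $\widehat v(\widehat\xb) = v(\xb_{\E} + h_{\E}\widehat\xb)$, so that $\E$ is mapped to a polygon $\widehat\E$ of diameter $O(1)$ that is still star-shaped with respect to a ball of radius $\gtrsim\rho$ (by Assumption $\mathbf{(A1)}$), with a uniformly bounded number of edges of length $\gtrsim\rho$ (by $\mathbf{(A2)}$), and whose curved edge is the image under the rescaled parametrization $\widehat\gamma$ of an interval; crucially $\|\widehat\gamma'\|_{L^\infty}$ and $\|\widehat\gamma^{-1}{}'\|_{L^\infty}$ are comparable to $\|\gamma'\|_{L^\infty}$ and $\|\gamma^{-1}{}'\|_{L^\infty}$ and hence uniformly bounded. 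Tracking how the $L^2(\E)$, $H^1(\E)$, $L^2(\dE)$, $H^1(\dE)$, and $H^{3/2}(\E)$ norms/seminorms transform under this dilation produces exactly the factors $h_\E^{-1}$, $h_\E$ displayed in the statement (the fractional $H^{3/2}$ case needs the usual Gagliardo-seminorm scaling, giving the correct homogeneity). Thus it suffices to prove the unscaled inequalities $\|\widehat v\|^2_{0,\partial\widehat\E} \lesssim \|\widehat v\|^2_{0,\widehat\E} + |\widehat v|^2_{1,\widehat\E}$ and $|\widehat v|^2_{1,\partial\widehat\E} \lesssim |\widehat v|^2_{1,\widehat\E} + |\widehat v|^2_{3/2,\widehat\E}$ with constants depending only on $\rho$ and on $\gamma'$ through the uniform bounds on the parametrization.

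For the reference-scale estimates I would argue edge by edge. On the straight edges the classical multiplicative trace inequality on a fixed-shape domain applies directly. For the single curved edge $\widehat e_1 = \widehat\gamma(I_{\widehat e_1})$, the first inequality ($L^2$ trace) is just Lemma~\ref{lemma:trace:BRV} at unit scale (take $\varepsilon=0$ there): the continuity of the trace $H^1(\widehat\E) \to H^{1/2}(\partial\widehat\E) \hookrightarrow L^2(\partial\widehat\E)$ gives $\|\widehat v\|_{0,\partial\widehat\E} \lesssim |\widehat v|_{1/2,\partial\widehat\E} + \|\widehat v\|_{0,\widehat\E}\lesssim \|\widehat v\|_{1,\widehat\E}$, whence the claimed bound after undoing the scaling. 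For the second inequality, I would use that $\widehat v \in H^{3/2}(\widehat\E)$ implies $\nablav \widehat v \in H^{1/2}(\widehat\E)$, and apply the continuous trace $H^{1/2}(\widehat\E) \to L^2(\partial\widehat\E)$ component-wise to $\nablav \widehat v$, so that $|\widehat v|_{1,\partial\widehat\E} = \|\nablav\widehat v\|_{0,\partial\widehat\E} \lesssim \|\nablav\widehat v\|_{1/2,\widehat\E} \lesssim |\widehat v|_{1,\widehat\E} + |\widehat v|_{3/2,\widehat\E}$. One subtlety: the intrinsic $H^1$-seminorm on the curved edge $\widehat e_1$ is defined via the arclength (or parameter) derivative, i.e. $\partial_t(\widehat v \circ \widehat\gamma)$; one must check that this tangential derivative along $\widehat e_1$ is controlled by the full (ambient) gradient restricted to $\widehat e_1$, which follows from the chain rule $\partial_t(\widehat v\circ\widehat\gamma) = \nablav\widehat v(\widehat\gamma(t))\cdot\widehat\gamma'(t)$ together with $\|\widehat\gamma'\|_{L^\infty}\lesssim 1$, and conversely the trace of $\nablav\widehat v$ on $\widehat e_1$ is pulled back to $I_{\widehat e_1}$ with a uniformly bounded Jacobian since $\|\widehat\gamma^{-1}{}'\|_{L^\infty}\lesssim 1$.

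The main obstacle I anticipate is not any single inequality but the careful bookkeeping of the curved-edge Sobolev norms and the dependence of the constants on $\gamma'$: specifically, making precise the equivalence between the intrinsic (arclength-parametrized) norms $\|\cdot\|_{0,e_1}$, $|\cdot|_{1,e_1}$ used in~\eqref{eq:new:stab:form} and the parameter-domain norms on $I_{e_1}$, and verifying that the change of variables $\widehat\gamma$ transforms these equivalently with constants depending only on $\|\gamma'\|_{L^\infty}$, $\|\gamma^{-1}{}'\|_{L^\infty}$ (uniformly bounded by $\mathbf{(A0)}$ and the fact that $\gamma$ is fixed). Once one adopts, as the paper does, the same definitions of Sobolev norms on curved edges as in~\cite{BRV2019_M2AN}, this is mostly a matter of invoking Lemma~\ref{lemma:trace:BRV} and the classical results at unit scale and then unwinding the dilation; I would present it in that order — state the scaling, reduce to unit scale, dispatch straight edges classically, handle the curved edge via Lemma~\ref{lemma:trace:BRV} and the $H^{1/2}$-trace applied to $\nablav v$, and finally collect the edge contributions using the uniformly bounded number of edges.
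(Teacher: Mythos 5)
Your overall scaling framework, the treatment of the first inequality (multiplicative trace plus Young at unit scale, then undoing the dilation), and the bookkeeping for the curved-edge parametrization are all fine and consistent with what the paper intends (its proof is a one-line reference to the trace and Young inequalities and to the arguments of \cite[Lemma 3.4]{BRV2019_M2AN}).

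There is, however, a genuine gap in your proof of the second inequality. You write $|\widehat v|_{1,\partial\widehat{\E}} = \|\nabla\widehat v\|_{0,\partial\widehat{\E}}$ and then invoke ``the continuous trace $H^{1/2}(\widehat{\E})\to L^2(\partial\widehat{\E})$'' applied componentwise to $\nabla\widehat v$. No such bounded trace operator exists: $s=1/2$ is precisely the endpoint at which the trace theorem fails. Indeed, functions vanishing in a neighbourhood of $\partial\widehat{\E}$ are dense in $H^{1/2}(\widehat{\E})$, so a bounded trace on $H^{1/2}(\widehat{\E})$ would have to vanish identically. Concretely, the estimate $\|\nabla\widehat v\|_{0,\partial\widehat{\E}}\lesssim\|\nabla\widehat v\|_{1/2,\widehat{\E}}$ would in particular give $\|\partial_{\bn_{\E}}\widehat v\|_{0,\partial\widehat{\E}}\lesssim\|\widehat v\|_{3/2,\widehat{\E}}$, i.e.\ an $L^2$ bound on the normal-derivative trace of an $H^{3/2}$ function, which is false. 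The statement of the lemma survives only because $|v|_{1,\partial\E}$ is the $H^1(\partial\E)$ seminorm of the \emph{trace} of $v$, hence involves only the tangential derivative $\partial_{\bt_{\E}}(v|_{\partial\E})$ and not the normal component of $\nabla v$. By replacing the left-hand side with the full gradient trace you have discarded exactly the structure that makes the inequality true. The correct route is to keep the tangential derivative and use the trace estimate $H^{3/2}(\E)\to H^1(\partial\E)$ for the trace of $v$ itself (edge by edge on the straight segments and via the $C^{k+1}$ parametrization on the curved edge); this is the $\varepsilon=1/2$ case of the scaled trace result of \cite{BRV2019_M2AN} that the paper cites, whereas Lemma~\ref{lemma:trace:BRV} as quoted here is only the $\varepsilon=0$ case and does not by itself deliver the second inequality. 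Your argument for the first inequality, and your remarks on the equivalence of the intrinsic and parameter-domain norms on the curved edge, can be kept as they are.
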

\begin{proof}
The proof follows from by combining the  trace and Young inequalities, along with the arguments of~\cite[Lemma 3.4]{BRV2019_M2AN} (see also \cite{BS18-M3AS}).

\end{proof}

The following estimates are a consequence of the Sobolev inequalities presented in~\cite[Eqs. (2.3) and (2.4)]{BS18-M3AS}.
\begin{align}
\|v\|_{L^{\infty}(\E)} &\lesssim h^{-1}_{\E} \|v\|_{0,\E}  + h_{\E} |v|_{2,\E}, 
\quad &\forall v \in H^2(\E), \label{Sobo-H2}\\
\|v\|_{L^{\infty}(\E)} &\lesssim h^{-1}_{\E} \|v\|_{0,\E} + h^{1/2}_{\E} |v|_{3/2,\E},
\quad &\forall v \in H^{3/2}(\E). \label{Sobo-H3-2}
\end{align}

In addition, we state the following well-known Poincar\'e--Friedrichs inequalities on Lipschitz domains. 
\begin{align}
h^{-1}_{\E} \|v\|_{0,\E} &\lesssim |v|_{1,\E} + h^{-1}_{\E}\left| \int_{\partial \E} v \, {\rm d}s\right| \qquad \qquad \qquad \qquad \qquad \forall v \in H^1(\E), \label{Poincare:ineq:H1}\\ 
h^{-2}_{\E} \|v\|_{0,\E}& \lesssim |v|_{2,\E} + h^{-2}_{\E}\left| \int_{\partial \E} v \, {\rm d}s\right| + h^{-1}_{\E}\left| \int_{\partial \E} \nabla v \, {\rm d}s\right| \qquad \forall v \in H^2(\E).\label{Poincare:ineq}
\end{align}

We close this section by recalling the properties of the Stein extension operator~$\widetilde{\mathcal{E}}$ in~\cite[Chapter~VI, Theorem $5'$]{Stein1970}. 
\begin{lemma}\label{Stein:extension}
Given a Lipschitz domain~$\Omega \subset\R^2$ and $t \in \R$ an non-negative real number, there exists an extension operator
$\widetilde{\mathcal{E}}: H^t(\Omega)\rightarrow H^t(\R^2)$ and a positive constant $C_t$ such that
	\begin{equation*}
	\widetilde{\mathcal{E}}(v)|_{\Omega} = v 
		\qquad \text{and} \qquad
		\|\widetilde{\mathcal{E}}(v)\|_{t, \R^2} \lesssim\, \|v\|_{t,\O} 
		\quad \forall v \in H^t(\Omega),
\end{equation*}
where hidden constant depends on~$t$ and on specific geometric parameters of~$\Omega$ such as Lipschitz continuity, see 
\cite[Chapter~VI, Theorem $5'$]{Stein1970}.
\end{lemma}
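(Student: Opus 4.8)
The plan is to invoke the classical Stein extension theorem directly: the statement is precisely \cite[Chapter~VI, Theorem~$5'$]{Stein1970}, so no independent argument is required, but for completeness I would outline the structure of Stein's construction. Since $\partial\O$ is compact and Lipschitz, one first covers it by finitely many open boxes, in each of which --- after a rigid motion --- $\O$ coincides with the region lying above the graph of a Lipschitz function; adding one interior patch compactly contained in $\O$ and fixing a smooth partition of unity subordinate to this cover reduces the problem to extending a function supported in a single special Lipschitz domain of the form $\{x_2 > \varphi(x_1)\}$.

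On such a domain one does \emph{not} flatten the boundary by the naive map $(x_1,x_2)\mapsto(x_1,x_2-\varphi(x_1))$, because a merely Lipschitz change of variables need not preserve $H^t(\O)$ for $t>1$. Instead one introduces a regularized distance $\delta(x)\simeq\operatorname{dist}(x,\partial\O)$, smooth in the exterior with controlled derivatives, and defines
\[
\widetilde{\mathcal{E}}(v)(x)\;=\;\int_{1}^{\infty} v\bigl(x^{*}_{\lambda}(x)\bigr)\,\psi(\lambda)\,\mathrm{d}\lambda \qquad (x\notin\overline{\O}),
\]
where $x^{*}_{\lambda}(x)\in\O$ is a reflected point at distance comparable to $\lambda\,\delta(x)$ from $\partial\O$, and $\psi$ is a fixed, rapidly decaying function on $[1,\infty)$ with $\int_1^\infty\psi=1$ and $\int_1^\infty\lambda^{j}\psi(\lambda)\,\mathrm{d}\lambda=0$ for every integer $j\ge1$. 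One then checks that $\widetilde{\mathcal{E}}(v)=v$ on $\O$, that the moment conditions on $\psi$ force all derivatives of the extension to match those of $v$ across $\partial\O$, and that differentiating under the integral sign and using the bounds on $\delta$ together with the decay of $\psi$ yields $\|\widetilde{\mathcal{E}}(v)\|_{m,\R^2}\lesssim\|v\|_{m,\O}$ for every nonnegative integer $m$, with one and the same operator; non-integer exponents then follow by interpolation, and summing against the partition of unity and undoing the rigid motions returns the estimate on $\O$.

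The main obstacle --- and the reason the result is nontrivial --- is exactly the construction of the regularized distance and of the averaging weight $\psi$ so that a \emph{single} operator is simultaneously bounded $H^t(\O)\to H^t(\R^2)$ for all $t\ge0$ on a domain whose boundary is only Lipschitz; all the analytic content is concentrated there, and in our proof we simply quote \cite[Chapter~VI, Theorem~$5'$]{Stein1970} for it, noting that the hidden constant $C_t$ depends only on $t$ and on the Lipschitz character of $\O$.
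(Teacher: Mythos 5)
Your proposal is correct and matches the paper exactly: the lemma is stated there as a recalled classical result with no proof beyond the citation of \cite[Chapter~VI, Theorem~$5'$]{Stein1970}, which is precisely what you invoke. Your sketch of Stein's construction (localization to special Lipschitz domains, regularized distance, averaged reflection with a weight having vanishing higher moments, and interpolation for non-integer $t$) is an accurate account of that reference and adds nothing that conflicts with the paper's use of the lemma.
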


\subsection{Interpolation results}
We now present the following preliminary lemma, which concerns solely the approximation properties on the boundary of the elements.
\begin{lemma}\label{Lemma:edge:interp}
Let $e \subset \dE$ be an (possibly curved) edge of an element $\E \in \CT_h$ and $v$ a function in $H^s(\E)$, for some $s>2$, such that $v|_e \in H^t(e)$ and $\dn v|_e \in H^{t-1}(e)$ for some $t > 3/2$. Moreover, let $\tvI \in \Vh(\E)$ be the interpolant of $v$ with respect to the boundary DoFs $\boldsymbol{D^{\partial}}$ introduced in Section~\ref{sec:dofs}.
	Then, for all $\: 0 \leq m \leq t$ it holds
	\begin{equation}\label{approx:interp:boud}
		|v - \tvI|_{m, e} \lesssim  h_E^{t-m} \, \|v\|_{t, e}.
	\end{equation} 
Furthermore,  for all  $\: 0 \leq m \leq t-1$ we have
	\begin{equation}\label{approx:interp:dn}
		|\dn v - \dn \tvI|_{m, e} \lesssim\, h_E^{t-1-m} \, \|\dn v\|_{t-1, e}.
	\end{equation} 
	The hidden constants in the two above estimates are independent of $h_{\E}$.
	\end{lemma}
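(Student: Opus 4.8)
The statement concerns \emph{edge-wise} approximation of the boundary interpolant $\tvI$, so the strategy is to reduce everything to a one-dimensional interpolation estimate on the parameter interval $I_e$, exploiting the fact that $\boldsymbol{D^{\partial}}$ prescribes, on each edge, precisely a Hermite-type set of data: the trace $v|_e$ is interpolated in $\widetilde{\P}_r(e)$ at vertices (value and tangential-via-gradient data) plus Gauss--Lobatto internal points, while $\dn v|_e$ is interpolated in $\widetilde{\P}_{k-1}(e)$ at vertices and Gauss--Lobatto internal points. Concretely, I would first treat the straight-edge case, where $\widetilde{\P}_r(e) = \P_r(e)$ and the interpolation is classical Hermite/Lagrange interpolation; the bounds \eqref{approx:interp:boud} and \eqref{approx:interp:dn} then follow from standard one-dimensional polynomial interpolation error estimates (Bramble--Hilbert on the interval), using $t>3/2$ to guarantee pointwise well-definedness of the vertex gradient values via a Sobolev embedding $H^{t}(e)\hookrightarrow C^1(\bar e)$ for the trace and $H^{t-1}(e)\hookrightarrow C^0(\bar e)$ for the normal derivative. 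The factor $h_E$ rather than $h_e$ appears because of assumption $\mathbf{(A2)}$, $h_e \gtrsim \rho\, h_E$, which lets one freely swap $h_e$ and $h_E$ up to the shape-regularity constant.

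For the curved edge $e_1 \subset \Gamma$, the reduction is via the parametrization $\gamma=\gamma_{e_1}: I_{e_1}\to e_1$, which by assumption $\mathbf{(A0)}$ is $C^{k+1}$-regular and invertible with $\gamma,\gamma^{-1}\in W^{1,\infty}$ (indeed smoother). Writing $\widehat v := v\circ\gamma \in H^{t}(I_{e_1})$ and $\widehat{w} := (\dn v)\circ\gamma\in H^{t-1}(I_{e_1})$, one observes that $\tvI|_{e_1} = \widehat{q}\circ\gamma^{-1}$ where $\widehat q\in\P_r(I_{e_1})$ is the Hermite/Lagrange interpolant of $\widehat v$ at the relevant nodes of $I_{e_1}$ (the vertex data being matched through the chain rule, exactly as in the unisolvency proof of the Theorem above), and similarly $\dn\tvI|_{e_1} = \widehat p\circ\gamma^{-1}$ with $\widehat p\in\P_{k-1}(I_{e_1})$ the interpolant of $\widehat w$. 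One then applies the one-dimensional interpolation estimate on $I_{e_1}$ to get $|\widehat v - \widehat q|_{m,I_{e_1}} \lesssim h_{I_{e_1}}^{t-m}\|\widehat v\|_{t,I_{e_1}}$ (and the analogous bound for $\widehat w$), and finally transports back to $e_1$ using the chain rule and $\|\gamma'\|_{L^\infty}, \|(\gamma^{-1})'\|_{L^\infty}$ (together with higher derivatives of $\gamma$, which is why $\mathbf{(A0)}$ requires $C^{k+1}$): the change of variables changes Sobolev seminorms on $e_1$ versus on $I_{e_1}$ only by factors depending on $\gamma$ and the shape-regularity constant, and $h_{I_{e_1}}\approx h_e\approx h_E$ by Remark~\ref{length:edges} and $\mathbf{(A2)}$.

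The estimate \eqref{approx:interp:dn} for $\dn\tvI$ is slightly more delicate because the set $\boldsymbol{D^\partial_{\!V\!I}}$ (resp.\ $\boldsymbol{D^\partial_{\!V}}$) prescribes $h_e\,\dn v$ only at the $k_{\bn}=k-2$ internal Gauss--Lobatto nodes, so one must also use $\boldsymbol{D^\partial_{\!I\!I}}$ to recover $\dn v$ at the two endpoints (via $\nabla v$ dotted with the unit normal), giving $k$ conditions total, which is exactly $\dim\P_{k-1}(I_{e_1})$; this is the Lagrange interpolation of $\widehat w$ at the $k$-point Gauss--Lobatto nodes, and again standard one-dimensional theory applies. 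I would also note that $m$ is allowed to be non-integer, in which case the one-dimensional interpolation bound in fractional seminorms follows by interpolation-space (real interpolation) arguments between consecutive integers, as is standard.

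\textbf{Main obstacle.} The technically delicate point is the curved-to-flat transfer of \emph{fractional} Sobolev seminorms: the $H^m(e_1)$ seminorm of a function is not simply comparable to the $H^m(I_{e_1})$ seminorm of its pull-back when $m$ is non-integer, because the Gagliardo double-integral is distorted by $\gamma$. This is handled exactly as in \cite{BRV2019_M2AN} (the Sobolev norms on curved edges being defined there, as recalled after Assumption $\mathbf{(A2)}$), by first proving the estimate for integer $m$ where the chain rule gives clean bounds involving derivatives of $\gamma$ up to the needed order, and then invoking real interpolation to cover the fractional range; the smoothness hypothesis $\mathbf{(A0)}$ (each $\Gamma_i$ of class $C^{k+1}$) is precisely what makes enough derivatives of $\gamma$ available for this. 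A secondary, purely bookkeeping obstacle is tracking the $h$-powers: the DoFs carry explicit scaling factors ($h_{\vb_i}$ on gradients, $h_e$ on normal derivatives) that must be reconciled with the clean $h_E^{t-m}$ in the final bound, but this is routine once one uses $\mathbf{(A1)}$--$\mathbf{(A2)}$ and $h_{\vb_i}\approx h_E$.
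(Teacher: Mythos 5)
Your proposal is correct and follows essentially the same route as the paper: the authors' proof simply observes that the statement is a one-dimensional approximation result for interpolants in mapped polynomial spaces and refers to the argument of \cite[Lemma 3.2]{BRV2019_M2AN} combined with the definition of $\boldsymbol{D^{\partial}}$, which is precisely the reduction to Hermite/Lagrange interpolation on $I_e$ via $\gamma$ that you carry out in detail (including the correct node counts, the recovery of endpoint normal-derivative data from $\boldsymbol{D^{\partial}_{\!I\!I}}$, and the treatment of fractional seminorms by real interpolation). Your write-up is a legitimate expansion of the details the paper leaves to the cited reference.
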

	\begin{proof}
		Note that the  result involves only the edges of the element and the associated boundary degrees of freedom. Therefore it is actually a one-dimensional approximation result for interpolants in mapped polynomial spaces. As a consequence, the proof easily follows by combining the arguments of \cite[Lemma 3.2]{BRV2019_M2AN} and the definition of degrees of freedom $\boldsymbol{D^{\partial}}$.
		
	\end{proof}	

Next theorem represents the main result of this subsection, which provides the construction of an interpolant (for a smooth enough function) in the virtual element space $\Vh$ defined in~\eqref{glob:VEMspace}. Moreover, this result establishes the corresponding error estimate.
\begin{theorem}\label{theorem:interpolation}
Under Assumptions $\mathbf{(A0)}-\mathbf{(A2)}$,  
for all $v \in H^{s}(\O)\cap V$, with $2 < s \leq k+1$, there exists $\vI \in \Vh$ such that
\begin{equation*}
 | v - \vI |_{2,\O}  \leq C \, h^{s-2} \|v\|_{s,\O},
\end{equation*}
where $C$ is a positive constant which depends on the degree $k$, the shape regularity constant $\rho$ and the parametrization $\gamma$, but is independent of $h$.
\end{theorem}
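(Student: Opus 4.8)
The natural approach is to build $\vI$ element by element using the degrees of freedom, then estimate $|v-\vI|_{2,\E}$ locally and sum. First I would invoke the Stein extension operator (Lemma~\ref{Stein:extension}) to extend $v$ to an $H^s$ function on $\R^2$, so that traces of $v$ and $\nabla v$ on curved edges are well-defined and enjoy the fractional trace bounds; this lets us apply Lemma~\ref{Lemma:edge:interp} with, say, $t = s - 1/2 > 3/2$ on each edge (using standard trace theory to control $\|v\|_{t,e}$ and $\|\dn v\|_{t-1,e}$ by $\|v\|_{s,\E}$). Define $\vI|_{\E} \in \VE$ as the function matching the DoF values of $v$; by the unisolvency theorem this is well-defined, and by the conformity of the DoFs (vertex values, vertex gradients, and moments shared across edges) the pieces glue into a global $\vI \in \Vh \subset V = H_0^2(\O)$ — the boundary conditions being inherited because $v \in V$ forces the boundary vertex/edge DoFs to vanish.

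The local estimate is the heart of the matter. The standard VEM route is: pick $v_\pi \in \P_k(\E)$ from Lemma~\ref{Lemma:poli:approx}, write $v - \vI = (v - v_\pi) + (v_\pi - \vI)$, bound the first term by $h_\E^{s-2}|v|_{s,\E}$, and control $\delta := v_\pi - \vI \in \VE$ (noting $v_\pi \in \VVE$) via a stability/inverse-type argument. Here is where one must deviate from~\cite{BRV2019_M2AN}: since $\|\D^2\cdot\|_{0,\E}$ is only a seminorm, I would use the Poincaré–Friedrichs inequality~\eqref{Poincare:ineq} to get $h_\E^{-2}\|\delta\|_{0,\E} \lesssim |\delta|_{2,\E} + h_\E^{-2}|\int_{\dE}\delta\,ds| + h_\E^{-1}|\int_{\dE}\nabla\delta\,ds|$, and the last two boundary-integral terms are controlled because $\delta$ matches $v-v_\pi$ at the vertex DoFs and (via Lemma~\ref{Lemma:edge:interp}) on the edges, hence $\|\delta\|_{0,\dE}$ and $|\delta|_{1,\dE}$ are small; then $|\delta|_{2,\E}$ itself must be estimated. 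For $|\delta|_{2,\E}$ I would use that $\Delta^2\delta \in \P_{k-4}(\E)$ together with the integration-by-parts identity from the unisolvency proof, $\int_\E|\Delta\delta|^2 = \int_\E \delta\,\Delta^2\delta + \int_{\dE}\dn\delta\,\Delta\delta - \int_{\dE}\delta\,\dn\Delta\delta$; the volume term is handled by the $L^2$-projection property of the internal moments (for $k\geq 4$) plus Poincaré, while the boundary terms are bounded using the edge interpolation estimates \eqref{approx:interp:boud}–\eqref{approx:interp:dn} for $\delta$ on each $e_i$ together with the scaled trace inequality Lemma~\ref{lemma:scaled:trace} to pass between $\Delta\delta$, $\dn\Delta\delta$ on $\dE$ and interior seminorms of $\delta$; a Hessian--Laplacian equivalence ($|\delta|_{2,\E}\approx \|\Delta\delta\|_{0,\E}$ modulo lower-order boundary data, again via~\eqref{Poincare:ineq}) closes the loop. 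Collecting, $|v-\vI|_{2,\E} \lesssim h_\E^{s-2}\|v\|_{s,\E^*}$ where $\E^*$ is a small neighborhood entering through the extension; squaring, summing over $\E \in \CT_h$, and using finite overlap of the extension neighborhoods yields $|v-\vI|_{2,\O} \lesssim h^{s-2}\|v\|_{s,\O}$.

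The main obstacle is precisely the control of $|\delta|_{2,\E}$ for $\delta \in \VE$ on a curved element: unlike the $C^0$ Poisson case, one cannot simply bound a discrete function by its boundary data plus its (polynomial) Laplacian in a self-contained way, because $\P_1(\E)\not\subseteq\VE$ means the kernel is not polynomial and the usual ``energy $\lesssim$ DoFs'' shortcut via the dofi-dofi norm is unavailable on the curved piece. One has to carefully exploit the biharmonic PDE structure ($\Delta^2\delta \in \P_{k-4}$), the new stabilizing form's edge integrals (which are exactly the quantities $\|\partial^2_{\bt_\E}\delta\|_{0,e_i}$ and $\|\partial_{\bt_\E}\dn\delta\|_{0,e_i}$ appearing in~\eqref{eq:new:stab:form}) to control the second-order edge traces, and the scaled trace lemma to absorb the curvature-dependent constants — the fact that these edge integrals appear in $\calS_\E$ is what makes the whole scheme analyzable, and the interpolation estimate is where their role first becomes visible. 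Routine but tedious bookkeeping with the fractional-order trace constants (all absorbed into the $\gamma$-dependence of $C$) rounds out the argument.
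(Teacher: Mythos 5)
Your overall architecture (local construction, triangle inequality through $v_{\pi}$, edge interpolation via Lemma~\ref{Lemma:edge:interp}, Stein extension to absorb the boundary Sobolev norms) matches the paper's, but the central step --- controlling $|\delta|_{2,\E}$ for $\delta = v_{\pi}-\vI$ --- is where your route breaks down, and it is precisely the step the paper's construction is designed to avoid. You take $\vI$ to be the DoF-interpolant of $v$ and propose to estimate $|\delta|_{2,\E}$ through the identity $\int_{\E}|\Delta\delta|^2 = \int_{\E}\delta\,\Delta^2\delta + \int_{\dE}\dn\delta\,\Delta\delta - \int_{\dE}\delta\,\dn\Delta\delta$. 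This requires the traces of $\Delta\delta$ and $\dn\Delta\delta$ on $\dE$ to exist and to be quantitatively controlled. A function in $\VE$ (or in $\VVE$) is only in $H^2(\E)$; even granting the extra interior regularity coming from $\Delta^2\delta\in\P_{k-4}(\E)$ and polynomial boundary data, the regularity near the corners of a general star-shaped polygon does not give $\Delta\delta\in H^{1}(\E)$ uniformly, and applying Lemma~\ref{lemma:scaled:trace} to $\Delta\delta$ would require $|\delta|_{3,\E}$ and $|\delta|_{7/2,\E}$ --- seminorms that are neither finite in general nor bounded by anything at your disposal. In the unisolvency proof this identity is used only qualitatively (all boundary terms vanish identically); turning it into a quantitative bound is a different and essentially intractable matter. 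The appeal to the edge integrals of $\mathcal{S}_{\E}$ does not help either: the paper's interpolation estimate makes no use of the stabilizing form, whose role only begins in the stability analysis.

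The paper's fix is to not take the DoF-interpolant at all. It defines $\vI$ on each element as the solution of the local biharmonic problem $\Delta^2\vI = \Delta^2 v_{\pi}$ in $\E$ with boundary data $\vI=\tvI$, $\dn\vI=\dn\tvI$ on $\dE$, where $\tvI$ is the boundary interpolant of Lemma~\ref{Lemma:edge:interp}. This function still lies in $\VE$ (its bilaplacian and its edge traces are of the prescribed polynomial type) and still glues into a global conforming $\vI\in\Vh$, but now $\vI-v_{\pi}$ is exactly biharmonic, so the Girault--Raviart a priori estimate $|\vI-v_{\pi}|^2_{2,\E}\lesssim\sum_{e}\big(|||\dn(\vI-v_{\pi})|||^2_{1/2,e}+|||\vI-v_{\pi}|||^2_{3/2,e}\big)$ converts the whole problem into estimating $H^{1/2}$/$H^{3/2}$ norms of boundary data, which Lemmas~\ref{Lemma:edge:interp}, \ref{lemma:scaled:trace} and~\ref{Lemma:poli:approx} handle directly. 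No inverse estimate, no stability of the DoF map, and no high-order boundary trace of a virtual function is ever needed. To salvage your plan you would have to replace the integration-by-parts step by exactly this continuous-dependence argument, which in effect means changing the definition of $\vI$.
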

\begin{proof}
Let $v \in  H^{s}(\O) \cap V$, then for all $\E \in \CT_h$, we can choose the polynomial function 
$v_{\pi} \in \P_{k}(\E)$, such that Lemma~\ref{Lemma:poli:approx} it holds,  i.e., 
\begin{equation}\label{trian:inq:01}
|v - v_{\pi}|_{2,\E} \lesssim  \, h_E^{s-2}|v|_{s,\E}.	
\end{equation}

Let us consider the following biharmonic problem: find $\vI$ such that 
\begin{equation}\label{prob:biharmonic}
\left\{
	\begin{aligned}
& \Delta^2 \vI  =  \, \Delta^2 v_{\pi} \qquad & \text{in $E$,} \\
&  \vI =\tvI  \qquad & \text{ on $\partial E$,}\\
&  \partial_{\bn_\E} \vI =  \dn\tvI  \qquad & \text{ on $\partial E$,}
	\end{aligned}
	\right.
\end{equation}
where $\tvI$ is the interpolant in Lemma~\ref{Lemma:edge:interp} (note that, by standard trace regularity results, $v$ satisfies the hypotheses of such Lemma with $t=s-1/2$). Since $\tvI \in \VE$, the boundary data in the above problem are polynomials (or mapped polynomials) on the edges of $\E$. Furthermore, also noting that $\tvI \in \VE \subset H^2(E)$, the boundary conditions in \eqref{prob:biharmonic} clearly satisfy the matching conditions at the vertices of $\E$ required in~\cite[pp. 16--17]{GR}. Therefore, we can conclude that the solution $\vI$ of problem~\eqref{prob:biharmonic} belongs to $H^2(\E)$. Furthermore, by definition of $\tvI$ and the $H^s(\Omega)$ regularity of $v$, the traces of $\vI$ (defined as above on each element $E$) will be continuous across internal mesh edges, and the same holds for the tangential and normal components of the gradient across edges. Therefore also at the global level $\vI \in \Vh$. 

We now consider the difference $(\vI - v_{\pi})|_{\E}$, which satisfies the following local problem:
\begin{equation}\label{eq:diff:biharm}
\left\{
\begin{aligned}
& \Delta^2 (\vI  - v_{\pi})= 0 \qquad & \text{in $E$,} \\
&  (\vI - v_{\pi})=  (\tvI - v_{\pi})  \qquad & \text{on $\partial E$,}\\
&  \partial_{\bn_\E} (v_{\rm I} - v_{\pi}) = \partial_{\bn_\E} (\tvI - v_{\pi}) \qquad & \text{on $\partial E$.}
	\end{aligned}
	\right.
\end{equation}

Now, we will introduce the following notation: for $i \in \{1,3\}$, we consider the scaled norm  
$||| \cdot |||_{i/2,e}$  on $e$, defined by
\begin{equation*}
||| \varphi |||_{i/2,e} := \big(h^{-i}_{\E}\|\varphi\|^2_{0,e} + |\varphi|^2_{i/2,e}\big)^{1/2}	 
\qquad \forall \varphi \in H^{i/2}(e).
\end{equation*}
Since $(\vI  - v_{\pi})$ is biharmonic, from \cite[pp. 17]{GR}, we deduce the following (scaled) continuous dependence on the data  
for problem \eqref{eq:diff:biharm}
\begin{equation}\label{cont:dep:bihar}
|\vI  - v_{\pi}|^2_{2,\E}	\lesssim  \sum_{e \in \partial\E} (|||\partial_{\bn_\E} (\vI - v_{\pi})|||^2_{1/2,e} + |||\vI - v_{\pi}|||^2_{3/2,e} )=: \sum_{e \in \partial\E} ( \: T_{1,e} + T_{2,e}),
\end{equation}
where the hidden constant is uniform respect to the element $\E$. 

First, we will estimate the term $T_{1,e}$ in~\eqref{cont:dep:bihar}. Adding and subtracting $\dn v$, then by using triangular inequality, Lemmas~\ref{lemma:scaled:trace}, \ref{lemma:trace:BRV} and~\ref{Lemma:poli:approx}, we have 
\begin{equation}\label{eq:pre:I_e}
\begin{split}
T_{1,e}& = |||\dn(\vI - v_{\pi})|||^2_{1/2,e}
\lesssim  |||\dn (\vI  - v)|||^2_{1/2,e} + |||\dn ( v - v_{\pi})|||^2_{1/2,e} \\
& =  |||\dn ( v-\vI )|||^2_{1/2,e} + h^{-1}_\E\|\nabla ( v - v_{\pi})\|^2_{0,e}+ |\nabla ( v - v_{\pi})|^2_{1/2,e} \\
& \lesssim  |||\dn ( v-\vI )|||^2_{1/2,e} + h^{-1}_{\E}(h^{-1}_{\E}\|\nabla ( v - v_{\pi})\|^2_{0,\E}+ h_{\E}|\nabla ( v - v_{\pi})|^2_{1,\E})+ |\nabla ( v - v_{\pi})|^2_{1,\E}\\
& \lesssim  |||\dn (v-\tvI )|||^2_{1/2,e} + h_{\E}^{2(s-2)}\|v\|^2_{s,\E} ,
\end{split}	
\end{equation} 
where we also used that $\dn v_I = \dn \tvI$ on $\partial E$.

Next, we will study the first term in \eqref{eq:pre:I_e}. Let us denote $\chi_e :=\partial_{\bn_\E} (v- \tvI)|_e$; by using the Poincar\'e inequality in one dimension (note that $\chi_e$ vanishes at both endpoints of $e$) we obtain $\|\chi_e\|_{0, e}^{1/2} \lesssim h_{\E}^{1/2} |\chi_e|^{1/2}_{1,e}$. 
Thus, by combining such observation and estimates~\eqref{approx:interp:dn} (with $m=1/2$ and $t=s-1/2>3/2$),  we get
\begin{equation}\label{error:edges}
||| \chi_e |||_{1/2,e}  \lesssim |\chi_e |_{1/2,e} \lesssim |\dn v -\dn \tvI|_{1/2,e}
\lesssim h_e^{s-2} \| \dn v  \|_{s-3/2,e} \lesssim h_{\E}^{s-2} \|\nabla v\|_{s-3/2,e}. 
\end{equation}
Now, we will distinguish two cases. If the edge $e$ is straight, then we can apply a standard trace inequality to obtain
\begin{equation*}
||| \chi_e |||_{1/2,e} \lesssim h_{\E}^{s-2} \|\nabla v\|_{s-3/2,e} \lesssim h_{\E}^{s-2} \|\nabla v\|_{s-1,E} \lesssim h_{\E}^{s-2} \| v\|_{s,E}. 
\end{equation*}

If the edge $e$ is curved, then we keep the estimate~\eqref{error:edges}. Therefore, we can deduce that 
\begin{equation}\label{eq2:pre:I_e}
|||\dn (v-\tvI )|||^2_{1/2,e} \lesssim	\left\{
	\begin{aligned}
		& h^{2(s-2)}_{\E} \| v\|^2_{s,E} \qquad & \text{if $e$ is straight,} \\
		& h^{2(s-2)}_{\E} \|\nabla v\|^2_{s-3/2,e} \qquad & \text{if $e$ is curved.}
		\end{aligned}
	\right.
\end{equation} 

Thus, from estimates~\eqref{eq:pre:I_e} and~\eqref{eq2:pre:I_e}, and adding over all the edges, we can infer 
\begin{equation}\label{sum:I_e}
\sum_{e \in \partial\E} T_{1,e}  = \sum_{e \in \partial\E} |||\dn (\vI - v_{\pi})|||^2_{1/2,e} 
\lesssim h_{\E}^{2(s-2)}(\| v\|^2_{s,\E}+ \|\nabla v\|^2_{s-3/2,\dE \cap \Gamma}).
\end{equation} 

In this part, we will analyze the term $T_{2,e}$. By employing similar arguments as in $T_{1,e}$, 
we arrive
\begin{equation*}
	\begin{split}
T_{2,e}&:= |||\vI - v_{\pi}|||^2_{3/2,e}\lesssim  |||v-\vI |||^2_{3/2,e} + ||| v - v_{\pi}|||^2_{3/2,e} \\
& = |||v-\vI|||^2_{3/2,e} + h^{-3}_\E\|v - v_{\pi}\|^2_{0,e}+ | v - v_{\pi}|^2_{3/2,e} \\
& =  ||| v-\vI|||^2_{3/2,e} + h^{-3}_{\E}(h^{-1}_{\E}\|v - v_{\pi}\|^2_{0,\E}
+ h_{\E}| v - v_{\pi}|^2_{1,\E})+ |v - v_{\pi}|^2_{2,\E}\\
& \lesssim  |||v-\tvI |||^2_{3/2,e} + h_{\E}^{2(s-2)}\|v\|^2_{s,\E} .
	\end{split}	
\end{equation*} 

We observe that $(v-\tvI)|_e \in H^{3/2}(e)$ and thus, by recalling bound~\eqref{approx:interp:boud}, we obtain
\begin{equation*}
|v-\tvI|^2_{3/2,e}  \lesssim h_{\E}^{2(s-2)}\|v\|^2_{s-1/2, e}. 
\end{equation*}

Following the same arguments  as those used to obtain~\eqref{sum:I_e}, we can deduce
\begin{equation}\label{sum:II_e}
\sum_{e \in \partial\E} T_{2,e} = \sum_{e \in \partial\E} |||\vI - v_{\pi}|||_{3/2,e}^2 \lesssim h_{\E}^{2(s-2)}(\| v\|^2_{s,\E}+ \| v\|^2_{s-1/2,\dE \cap \Gamma}).
\end{equation} 

Now, we combine~\eqref{cont:dep:bihar},~\eqref{sum:I_e} and~\eqref{sum:II_e}, then summing over all the elements $\E \in \CT_h$, yields
$$
\sum_{E \in \CT_h}| v - \vI |^2_{2, E} \lesssim \, h_{\E}^{2(s-2)} \, \Big( \, \|v\|^2_{s, \O} + \sum_{i=1}^N 
(\|v \|^2_{s - 1/2, \Gamma_i} + \|\nabla v \|^2_{s - 3/2, \Gamma_i})  \Big),
$$
from where it follows
\begin{equation}\label{pre:interp}
| v - \vI |_{2, \O} \lesssim \, h^{s-2} \, \Big( \, \|v\|_{s, \O} + \sum_{i=1}^N 
(\|v \|_{s - 1/2, \Gamma_i} + \|\nabla v \|_{s - 3/2, \Gamma_i}) \Big).
\end{equation}

For any curve $\Gamma_i \subset \partial \Omega$, let $\mathcal{D}_i$ be a domain in $\R^2$, with boundary $\partial \mathcal{D}_i \in C^{s-2, 1}$, such that $\Gamma_i \subset \partial \mathcal{D}_i$. Thus, we apply the trace theorem for smooth domains~\cite{Lions-Magenes}, and by Lemma~\ref{Stein:extension},  we get
\begin{equation*}
\begin{split}
\|v\|_{s-1/2, \Gamma_i} &= \|\widetilde{\mathcal{E}}(v)\|_{s-1/2, \Gamma_i}  \leq
\|\widetilde{\mathcal{E}}(v)\|_{s-1/2, \partial \mathcal{D}_i} \leq 
\|\widetilde{\mathcal{E}}(v)\|_{s, \mathcal{D}_i} \leq
\|\widetilde{\mathcal{E}}(v)\|_{s, \R^2}  \lesssim  \|v\|_{s,\O}.
\end{split}
\end{equation*} 

Analogously, applying a vector valued version of the Stein extension
operator in Lemma~\ref{Stein:extension}, we have 
$$\|\nabla v\|_{s-3/2, \Gamma_i} = \|\widetilde{\mathcal{E}}(\nabla v)\|_{s-3/2, \Gamma_i} \lesssim  \|\nabla v\|_{s-1,\O} \lesssim  \|v\|_{s,\O}.$$ 

We obtain the desired result by combining the above estimates, the triangular inequality together with the bounds~\eqref{pre:interp} and \eqref{trian:inq:01}. Finally, we observe that the hidden constants are independent of the mesh discretization parameter $h$. This completes the proof.

\end{proof}

\subsection{Stability analysis}\label{STAB:SECTION}
In this subsection, we establish a stability bound for the stabilizing form $\mathcal{S}_E(\cdot,\cdot)$ introduced in~\eqref{eq:stab:form}. To begin, we observe that, by its definition, this form can be extended to the space $H^{5/2}(\E)$ for each $E \in \O_h$.

The following result will play a crucial role in the subsequent sections, particularly in proving the coercivity of the bilinear form $a^h(\cdot,\cdot)$ (see Proposition~\ref{prop:stability:coerc}).

\begin{proposition}\label{prop:stability:S_E}
Let~$\E \in \O_h$ and  $\mathcal{S}_E(\cdot,\cdot)$ be the bilinear form defined in~\eqref{eq:stab:form}. Then, under assumptions $\mathbf{(A0)}-\mathbf{(A2)}$, for all $w \in H^{5/2}(\E)$, such that both $w$ and $\nabla w$  have zero average on $\partial \E$, the following inequality holds:
\begin{equation}\label{continuity:S_E}
	\mathcal{S}_E(w,w)\lesssim |w|^2_{2,\E}+h_{\E}|w|^2_{5/2,\E},
	\end{equation}
where the hidden constant is independent of $h_{\E}$.
\end{proposition}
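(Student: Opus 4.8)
The strategy is to bound each of the three pieces of $\mathcal{S}_E$ in \eqref{eq:stab:form} separately against the right-hand side of \eqref{continuity:S_E}. Throughout we exploit the zero-average hypotheses on $w$ and $\nabla w$ over $\partial\E$, which will let us apply the Poincaré--Friedrichs inequalities \eqref{Poincare:ineq:H1} and \eqref{Poincare:ineq} to replace $L^2$-norms of $w$ and $\nabla w$ by seminorms. First I would handle the vertex-boundary term $\mathcal{S}^{\boldsymbol{\partial},\vb}_{\E}(w,w)$: it is a sum of $h_{\vb_i}^{-2}|w(\vb_i)|^2$ and $|h_{\vb_i}\nabla w(\vb_i)|^2 h_{\vb_i}^{-2} = |\nabla w(\vb_i)|^2$ over the (finitely many) vertices. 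Since $w\in H^{5/2}(\E)$, both $w$ and $\nabla w$ have well-defined pointwise values, and pointwise values at vertices are controlled by $L^\infty(\E)$-norms, which by the Sobolev-type bounds \eqref{Sobo-H2} applied to $w$ and \eqref{Sobo-H3-2} applied to each component of $\nabla w$ (noting $\nabla w\in H^{3/2}(\E)$) give $h_{\E}^{-2}\|w\|_{L^\infty(\E)}^2 \lesssim h_{\E}^{-2}(h_{\E}^{-1}\|w\|_{0,\E}+h_{\E}|w|_{2,\E})^2$ and $\|\nabla w\|_{L^\infty(\E)}^2 \lesssim (h_{\E}^{-1}\|\nabla w\|_{0,\E}+h_{\E}^{1/2}|\nabla w|_{3/2,\E})^2$. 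Applying \eqref{Poincare:ineq} to kill $\|w\|_{0,\E}$ and \eqref{Poincare:ineq:H1} (componentwise) to kill $\|\nabla w\|_{0,\E}$, using the zero-average conditions, collapses everything to $|w|_{2,\E}^2 + h_{\E}|w|_{5/2,\E}^2$ up to shape-regularity constants (here I use $|\nabla w|_{3/2,\E}\approx|w|_{5/2,\E}$ and $h_{\vb_i}\approx h_{\E}$).

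Next I would treat the internal-moment term $\mathcal{S}^{\boldsymbol{\circ}}_{\E}(w,w) = h_{\E}^{-2}\sum_j |\boldsymbol{D}^{\boldsymbol{\circ}}_j(w)|^2$. Each $\boldsymbol{D}^{\boldsymbol{\circ}}_j(w) = h_{\E}^{-2}\int_{\E} w\,m_j\,{\rm d}\E$ with $\|m_j\|_{L^\infty(\E)}\lesssim 1$, so by Cauchy--Schwarz $|\boldsymbol{D}^{\boldsymbol{\circ}}_j(w)|\lesssim h_{\E}^{-2}|\E|^{1/2}\|w\|_{0,\E}\lesssim h_{\E}^{-1}\|w\|_{0,\E}$; since the number of such $j$ is bounded by a constant depending only on $k$, we get $\mathcal{S}^{\boldsymbol{\circ}}_{\E}(w,w)\lesssim h_{\E}^{-4}\|w\|_{0,\E}^2$, and \eqref{Poincare:ineq} together with the zero-average hypotheses bounds this by $|w|_{2,\E}^2$. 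The main work is the edge term $\mathcal{S}^{\boldsymbol{\partial},e}_{\E}(w,w) = h_{\E}\sum_i\big(\|\partial_{\tb_{\E}}(\partial_{\bn_\E}w)\|_{0,e_i}^2 + \|\partial^2_{\bt_\E}w\|_{0,e_i}^2\big)$. For the first summand, $\partial_{\tb_{\E}}(\partial_{\bn_\E}w)$ is a tangential derivative along $e_i$ of the trace of $\nabla w\cdot\bn_\E$; one controls its $L^2(e_i)$-norm by $|\nabla w|_{1,\partial\E}$, then invokes the scaled trace inequality \eqref{lemma:scaled:trace}, namely $|\nabla w|_{1,\partial\E}^2\lesssim h_{\E}^{-1}|\nabla w|_{1,\E}^2 + |\nabla w|_{3/2,\E}^2 \approx h_{\E}^{-1}|w|_{2,\E}^2 + |w|_{5/2,\E}^2$, where the hypothesis $w\in H^{5/2}(\E)$ guarantees $\nabla w\in H^{3/2}(\E)$ so that Lemma~\ref{lemma:scaled:trace} applies. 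Multiplying by $h_{\E}$ yields exactly $|w|_{2,\E}^2 + h_{\E}|w|_{5/2,\E}^2$. The second summand $\|\partial^2_{\bt_\E}w\|_{0,e_i}^2$ is the $L^2(e_i)$-norm of a second tangential derivative of the trace of $w$ itself; bounding it by $|w|_{2,\partial\E}^2$ and applying the analogous scaled trace inequality (now the version for $H^{3/2}(\E)$-functions applied to the first derivatives of $w$, i.e.\ the same statement as in Lemma~\ref{lemma:scaled:trace} with the roles adjusted) gives $|w|_{2,\partial\E}^2\lesssim h_{\E}^{-1}|w|_{2,\E}^2 + |w|_{5/2,\E}^2$, and again multiplying by $h_{\E}$ closes the estimate.

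\textbf{Main obstacle.} The delicate point is the edge term, and specifically making rigorous sense of $\partial^2_{\bt_\E}w|_{e_i}$ and $\partial_{\tb_{\E}}(\partial_{\bn_\E}w)|_{e_i}$ for a merely $H^{5/2}(\E)$ function on a \emph{curved} edge, and controlling them by the correct fractional interior seminorms. On a curved edge the tangential/normal frame varies, so $\partial^2_{\bt_\E}$ is not simply a coordinate derivative and picks up curvature terms involving $\gamma''$; one must verify that these extra terms are lower order and absorbed with constants depending only on $\rho$ and $\gamma$ (hence uniform in $h_{\E}$), which is precisely why Assumption $\mathbf{(A0)}$ asks for $\gamma\in C^{k+1}$. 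I would make this precise by passing to the reference interval via $\gamma_{e_i}$, where tangential derivatives become ordinary $t$-derivatives of $\widehat{w}=w\circ\gamma_{e_i}$, establishing a one-dimensional estimate $\|\widehat{w}''\|_{0,I_{e_i}}^2 + \|(\widehat{\partial_{\bn}w})'\|_{0,I_{e_i}}^2 \lesssim \|w\|_{2,\partial\E}^2 + \|\nabla w\|_{1,\partial\E}^2$ with $\gamma$-dependent constants, and only then applying the scaled trace inequalities of Lemma~\ref{lemma:scaled:trace} together with $|\nabla w|_{3/2,\E}\approx|w|_{5/2,\E}$. The zero-average hypotheses play no role in the edge term itself but are essential to reduce the vertex and internal-moment contributions (which a priori carry full $\|w\|_{0,\E}$ and $\|\nabla w\|_{0,\E}$) down to pure seminorms, so I would be careful to invoke \eqref{Poincare:ineq:H1}--\eqref{Poincare:ineq} exactly where those low-order norms appear.
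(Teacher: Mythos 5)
Your proposal is correct and follows essentially the same route as the paper's proof: the same splitting of $\mathcal{S}_E$ into internal, vertex and edge contributions, the same use of the Sobolev bounds \eqref{Sobo-H2}--\eqref{Sobo-H3-2} and the scaled trace inequality of Lemma~\ref{lemma:scaled:trace} applied to $\nabla w$, and the Poincar\'e--Friedrichs inequalities \eqref{Poincare:ineq:H1}--\eqref{Poincare:ineq} to remove the low-order norms via the zero-average hypotheses. Your explicit attention to the curvature terms arising from $\partial^2_{\bt_\E}$ on curved edges is a point the paper passes over with an ``$\approx$'', but it does not change the argument.
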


\begin{proof}
From definition~\eqref{eq:stab:form}, we have
\begin{equation}\label{split:I}
	\begin{split}
		\mathcal{S}_E (w,w) 
		&= \mathcal{S}^{\boldsymbol{\circ}}_{\E} (w, w) + \mathcal{S}^{\boldsymbol{\partial},\vb}_{\E} (w, w)  + \mathcal{S}^{\boldsymbol{\partial},e}_{\E} (w, w)\\
		&\approx h^{-2}_{\E}  \sum_{j=1}^{(k-3)(k-2)/2} \boldsymbol{D}^{\boldsymbol{\circ}}_j(w)^2
		+ \sum_{i=1}^{3N_{\E} } h^{-2}_{\vb_i} \boldsymbol{D}^{\boldsymbol{\partial},\vb}_i(w)^2  
		+ h_{\E} \sum_{i=1}^{N_e} |\nabla w|^2_{1,e}. 
	\end{split}
\end{equation}

First, we will analyze the term $\mathcal{S}_{\E}^{\boldsymbol{\circ}}(w,w)$. Indeed,  by recalling that the polynomial basis  $\{m_j\}_{j=1}^{(k-3)(k-2)/2}$ in $\boldsymbol{D^{\circ}}$ satisfies $\|m_j\|_{L^{\infty}(E)} \leq 1$, and using the 
Cauchy-Schwarz inequality we obtain
\begin{equation}\label{eq:S^o}
	\begin{split}
		\mathcal{S}_{\E}^{\boldsymbol{\circ}}(w,w)
		&=\sum_{j=1}^{(k-3)(k-2)/2} h^{-2}_E \left( h^{-2}_E \int_E w \, m_j \, {\rm d}E  \right)^2 	\\
		&\leq h^{-2}_E h^{-4}_E \left(\int_E w \,\, {\rm d}E  \right)^2 \, \sum_{j=1}^{(k-3)(k-2)/2} \|m_j\|_{L^{\infty}(E)}^2 \\
		& \lesssim h^{-2}_E h^{-4}_E |E| \|w\|_{0,E}^2 
		\lesssim  h^{-4}_E\|w\|_{0,E}^2.
	\end{split}
\end{equation}

Now, we will relabel the index for the term $\mathcal{S}^{\boldsymbol{\partial},\vb}(w_h,w_h)$ (see DoFs $\boldsymbol{D^{\partial}_{\!I}}$ and $\boldsymbol{D^{\partial}_{\!I\!I}}$), as follows
\begin{equation}\label{eq:S^partial}
	\begin{split}
		\mathcal{S}_{E}^{\boldsymbol{\partial},\vb}(w,w)
		&=\sum_{i_1=1}^{N_{\E}} h^{-2}_{\vb_i}\boldsymbol{D}^{\boldsymbol{\partial, \: \!I}}_{i_1}(w)^2 + \sum_{i_2=1}^{2N_{\E}} h^{-2}_{\vb_i}\boldsymbol{D}^{\boldsymbol{\partial,\: \!I\!I}}_{i_2}(w)^2
		=:\mathcal{S}_{\E}^{\boldsymbol{\partial,\: \!I}}(w,w) +\mathcal{S}_{\E}^{\boldsymbol{\partial,\: \!I\!I}}(w,w).
	\end{split}
\end{equation}

By using the inequality~\eqref{Sobo-H2} and  $ h_{\vb_{i_1}} \approx h_{\E}$, we have 
\begin{equation}\label{eq:S^partial:1}
	\begin{split}
		\mathcal{S}_{\E}^{\boldsymbol{\partial, \: \!I}}( w, w) &\lesssim   N_E \:h^{-2}_{\vb_{i_1}}\| w\|^2_{L^{\infty}(\dE)} \lesssim h^{-2}_{\E} \| w\|^2_{L^{\infty}(\E)}
		\lesssim  h^{-4}_{\E} \| w\|^2_{0,\E} + | w|^2_{2,\E}.
	\end{split}
\end{equation} 

For the term $\mathcal{S}^{\boldsymbol{\partial, \: \!I\!I}}( w, w)$,  we use the inequality~\eqref{Sobo-H3-2} (applied to $\nabla w$), to obtain
\begin{equation}\label{eq:S^partial:2}
	\begin{split}
		\mathcal{S}_{E}^{\boldsymbol{\partial, \: \!I\!I}}( w, w)
		&\lesssim 2N_E  h^{-2}_{\vb_{i_1}}  h^{2}_{\vb_{i}}\|\nabla  w\|^2_{L^{\infty}(\E)}
		\lesssim  | w|^2_{2,\E} +  h_{E}| w|^2_{5/2,\E}.
	\end{split}
\end{equation}

Now, by using the second trace inequality of Lemma~\ref{lemma:scaled:trace}, for the remaining term we have  
\begin{equation}\label{eq:S^partial:3}
	\mathcal{S}_{E}^{\boldsymbol{\partial},e}( w, w) \lesssim	 h_{\E} \sum_{i=1}^{N_e} |\nabla  w|^2_{1,e}
	\lesssim |\nabla  w|^2_{1,\E}  + h_{\E} |\nabla  w|^2_{3/2,\E}  \lesssim | w|^2_{2,\E}  + h_{\E} | w|^2_{5/2,\E}.	
\end{equation}

Thus, by combining  \eqref{eq:S^o}-\eqref{eq:S^partial:3} and \eqref{split:I}, we get
\begin{equation*}	
\calS_E( w, w)\lesssim h_E^{-4} \| w\|^2_{0,\E} + | w|_{2,\E}^2+h_{\E}| w|_{5/2,\E}^2.	
\end{equation*}

Finally, the bound~\eqref{continuity:S_E} follows by combining the above estimate and the Poincar\'e--Friedrichs inequality on Lipschitz domains~\eqref{Poincare:ineq}. The proof is complete.

\end{proof}

Now, we recall the following result, which establishes a norm equivalence on polygons (see for instance~\cite{BRV2019_M2AN}). 

\begin{lemma}\label{lemma:l2:norm:equi}
Let $E \in \Omega_h$. Under the assumptions $\mathbf{(A0)}-\mathbf{(A2)}$,
let $\mathbf{g}:= (g_j)_{j=1}^{d_t}$ be a vector of real coefficients and let $g := \sum_j^{d_t} g_j \, m_j \in \P_{t}(E)$, 
where $d_t={\rm dim}(\P_{t}(\E))$. Then, the following norm equivalence holds
\[
 h_E^2 \, \|\mathbf{g}\|^2_{\ell^2} \lesssim \|g\|^2_{0, E} \lesssim \, h_E^2 \, \|\mathbf{g}\|^2_{\ell^2},
\]
where the hidden constants are uniform and $\|\cdot\|_{\ell^2}$ is the classical $\ell^2$-norm.
\end{lemma}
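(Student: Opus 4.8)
The statement is a standard scaling/equivalence result, and my plan is to reduce it to a pure polynomial norm-equivalence fact by exploiting the shape-regularity assumptions. First I would observe that, since all norms on the finite-dimensional space $\P_t$ are equivalent, the core of the statement is the claim that the $L^2(E)$-norm of $g = \sum_j g_j m_j$ is comparable, after the natural $h_E^2$ scaling, to the Euclidean norm of the coefficient vector $\mathbf{g}$; the constants must be shown to be uniform over the mesh family. The cleanest route is to pass to a reference configuration: set $\widehat{\xb} = (\xb - \xb_E)/h_E$, so that each scaled monomial $m_j(\xb) = \widehat{\xb}^{\,\boldsymbol\alpha_j}$ becomes a fixed monomial $\widehat m_j(\widehat\xb)$ independent of $E$, and the change of variables gives $\|g\|_{0,E}^2 = h_E^2 \, \|\widehat g\|_{0,\widehat E}^2$ with $\widehat g = \sum_j g_j \widehat m_j$ and $\widehat E$ the image of $E$. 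This isolates the factor $h_E^2$ and leaves us to prove $\|\mathbf g\|_{\ell^2}^2 \approx \|\widehat g\|_{0,\widehat E}^2$ with constants independent of $\widehat E$.

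For that remaining equivalence I would use Assumption $\mathbf{(A1)}$: $E$ is star-shaped with respect to a ball $B_E$ of radius $\gtrsim \rho\, h_E$, hence $\widehat E$ contains a ball $\widehat B$ of radius $\gtrsim \rho$ centered at the (scaled) star-center, and $\widehat E$ is itself contained in a ball of radius $\lesssim 1$ (since $h_E$ is the diameter of $E$). Now consider the map $\mathbf g \mapsto \|\widehat g\|_{0,\widehat B}$. On the one hand $\|\widehat g\|_{0,\widehat B} \le \|\widehat g\|_{0,\widehat E} \lesssim \|\mathbf g\|_{\ell^2}$ by the triangle inequality and the uniform bound $\|\widehat m_j\|_{0,\widehat E}\lesssim 1$ (all monomials of degree $\le t$ are bounded on a bounded set). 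On the other hand, $\mathbf g \mapsto \|\widehat g\|_{0,\widehat B}$ is a norm on $\R^{d_t}$ — it is nondegenerate because the monomials $\widehat m_j$ are linearly independent on any ball — and by compactness of the unit sphere in $\R^{d_t}$ it is bounded below by a positive constant; crucially this constant can be taken uniform because $\widehat B$ ranges over balls of radius $\gtrsim \rho$ contained in a ball of radius $\lesssim 1$, and the family of such reference balls is compact modulo translation, so a single lower bound depending only on $\rho$ and $t$ works (alternatively, translate $\widehat B$ to the origin and rescale to the unit ball, which only changes constants by factors depending on $\rho$). Chaining the inequalities gives $\|\mathbf g\|_{\ell^2} \lesssim \|\widehat g\|_{0,\widehat B} \le \|\widehat g\|_{0,\widehat E} \lesssim \|\mathbf g\|_{\ell^2}$, and undoing the scaling yields the claim.

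The only genuinely delicate point — and the step I would be most careful about — is the \emph{uniformity} of the lower constant: one must argue that the compactness/nondegeneracy argument does not secretly depend on the particular shape of $E$ but only on $\rho$ and $t$. This is handled precisely by using the inscribed ball $B_E$ from $\mathbf{(A1)}$ rather than $E$ itself, so that the lower bound is extracted on a controlled ball instead of the (highly variable) polygon; the curved-edge nature of $E$ plays no role here since we only use that $E$ contains $B_E$ and is contained in the ball of radius $h_E$. Everything else is routine change-of-variables bookkeeping, and the result is exactly \cite{BRV2019_M2AN}, so I would simply cite it for the detailed constants after sketching the argument above.
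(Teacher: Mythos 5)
Your argument is correct and is the standard scaling-plus-compactness proof: the paper itself gives no proof of this lemma, merely citing \cite{BRV2019_M2AN}, and the argument there is essentially the one you outline (change of variables to the reference frame, upper bound from boundedness of the scaled monomials, lower bound on the inscribed ball guaranteed by $\mathbf{(A1)}$ with uniformity from compactness of the admissible centers and radii). The one point worth making explicit is that the inscribed ball $B_E$ is contained in $E$, so $\|\widehat g\|_{0,\widehat B}\le\|\widehat g\|_{0,\widehat E}$, which you do use correctly.
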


Additionally, we have the following $H^2$-orthogonality decomposition.
\begin{lemma}\label{H2:decomp}
Any function $v_h \in H^2(\E)$ admits the decomposition $v_h= v_1 +v_2$, where 
\begin{itemize}
	\item $v_1 \in  H^2(\E)$, $v_1|_{\partial \E} = v_h|_{\partial \E}$, $\partial_{\bn_{\E}}v_1 = \partial_{\bn_{\E}} v_h$  
	and $\Delta^2 v_1 =0 \qin \E$;
	\item $v_2 \in  H_0^2(\E)$, $\Delta^2 v_2= \Delta^2 v_h \qin \E$.
\end{itemize}
Moreover, this decomposition is $H^2$-orthogonal in the sense that
\begin{equation}\label{H2:orthonal}
	|v_h|^2_{2,\E}= |v_1|^2_{2,\E} +|v_2|^2_{2,\E}.
\end{equation}
\end{lemma}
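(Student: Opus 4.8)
The plan is to prove the $H^2$-orthogonal decomposition by first constructing $v_1$ as the solution of the biharmonic Dirichlet problem on $\E$ with the same Cauchy data on $\partial\E$ as $v_h$, and then setting $v_2 := v_h - v_1$. First I would invoke the existence theory for the biharmonic equation on a Lipschitz domain: given the boundary data $(v_h|_{\partial\E}, \partial_{\bn_\E} v_h)$, which lies in $H^{3/2}(\partial\E)\times H^{1/2}(\partial\E)$ since $v_h\in H^2(\E)$ (and the matching conditions at vertices are automatically satisfied because these are traces of a genuine $H^2$ function), there exists a unique $v_1\in H^2(\E)$ with $\Delta^2 v_1 = 0$ in $\E$ and the prescribed Cauchy data --- this is exactly the well-posedness result of~\cite[pp.~16--17]{GR} already used in the proof of Theorem~\ref{theorem:interpolation}. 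Then $v_2 := v_h - v_1$ satisfies $v_2\in H^2(\E)$, $v_2|_{\partial\E} = 0$ and $\partial_{\bn_\E} v_2 = 0$, i.e. $v_2\in H^2_0(\E)$, and $\Delta^2 v_2 = \Delta^2 v_h$ in $\E$ in the distributional sense; this gives all the listed properties of $v_1$ and $v_2$.

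The remaining point is the orthogonality identity~\eqref{H2:orthonal}. The natural inner product here is $(\D^2\cdot,\D^2\cdot)_{0,\E}$, which induces the seminorm $|\cdot|_{2,\E}$. Expanding,
\[
|v_h|^2_{2,\E} = |v_1|^2_{2,\E} + 2(\D^2 v_1, \D^2 v_2)_{0,\E} + |v_2|^2_{2,\E},
\]
so it suffices to show the cross term vanishes. Since $v_2\in H^2_0(\E)$ it can be approximated in $H^2(\E)$ by functions in $C_c^\infty(\E)$, so by density it is enough to check $(\D^2 v_1, \D^2\varphi)_{0,\E} = 0$ for $\varphi\in C_c^\infty(\E)$. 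Integrating by parts twice (all boundary terms disappear because $\varphi$ and $\nabla\varphi$ vanish near $\partial\E$) yields $(\D^2 v_1,\D^2\varphi)_{0,\E} = \int_\E (\Delta^2 v_1)\,\varphi = 0$ because $\Delta^2 v_1 = 0$. Passing to the limit gives $(\D^2 v_1,\D^2 v_2)_{0,\E} = 0$, hence~\eqref{H2:orthonal}.

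The main obstacle --- really the only nontrivial ingredient --- is the first step: ensuring that the biharmonic Dirichlet problem with the given (merely $H^2$-traced) boundary data is well posed on a general star-shaped Lipschitz polygon, including the compatibility of the data at the corners. This is precisely the delicate regularity issue on nonsmooth domains, but it is settled by the same reference~\cite{GR} already relied upon earlier in the paper, so here it can simply be quoted. Everything after that --- identifying $v_2$ as the $H^2_0$ part with the right bilaplacian, and the orthogonality via integration by parts and density --- is routine. I would also remark that the decomposition is unique: if $v_h = \tilde v_1 + \tilde v_2$ is another such splitting, then $v_1 - \tilde v_1 = \tilde v_2 - v_2 \in H^2_0(\E)$ is biharmonic, hence zero by uniqueness for the homogeneous problem, though this uniqueness is not strictly needed for the statement as written.
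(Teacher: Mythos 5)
Your proof is correct, but it builds the decomposition from the opposite end compared to the paper, and with heavier machinery. The paper's proof is a single line: take $v_2$ to be the $H^2$-projection of $v_h$ onto the closed subspace $H_0^2(\E)$ with respect to the inner product $(\D^2\cdot,\D^2\cdot)_{0,\E}$ (which is an inner product there, the induced norm being equivalent to the full $H^2$-norm), and set $v_1:=v_h-v_2$. With that definition the orthogonality \eqref{H2:orthonal} is automatic from the projection theorem, $\Delta^2 v_1=0$ is exactly the statement that $v_1\perp H_0^2(\E)$, and $\Delta^2 v_2=\Delta^2 v_h$ is the Euler equation of the projection; no boundary-value-problem theory is needed. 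You instead construct $v_1$ first, as the solution of the nonhomogeneous biharmonic Dirichlet problem via \cite{GR}, and then verify orthogonality a posteriori by integration by parts and density of $C_c^\infty(\E)$ in $H_0^2(\E)$ --- both steps are fine. What you flag as ``the only nontrivial ingredient,'' namely well-posedness of the Dirichlet problem with merely $H^{3/2}\times H^{1/2}$ Cauchy data on a Lipschitz polygon, is actually avoidable here precisely because the data are traces of the globally defined $v_h$: the function $v_h$ itself is an $H^2$ lifting of its own Cauchy data, so your $v_1$ is $v_h-w$ with $w\in H_0^2(\E)$ given by Lax--Milgram for the homogeneous-data problem --- which is the paper's projection in disguise. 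So the two constructions produce the same (unique, as you note) decomposition; the paper's phrasing just gets existence and orthogonality in one stroke, while yours makes the biharmonic-extension character of $v_1$ explicit at the cost of quoting \cite{GR} where it is not strictly required.
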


\begin{proof}
Let $v_h $ be an element of $H^2(\E)$. Then, we can choose $v_2$  as the $H^2$-projection of $v_h$ in $H_0^2(\E)$ and define
$v_1:= v_h-v_2 \in H^2(\E)$.  We observe that by construction the functions $v_1$ and $v_2$ satisfy the properties of the lemma.

\end{proof}

We now introduce the following definition (for all mesh elements $E$). A function $v$ is said to satisfy assumption ${\cal H}_t(\E)$, for some $t \ge 2$, if $v \in H^2(\E)$ and, for every edge $e \subset \partial \E$, the inclusions $v|_e \in H^t(e)$ and $\dn v|_e \in H^{t-1}(e)$ hold.
\begin{remark}\label{remark:H_t}
From the definition of the virtual space $\VE$, we know that its functions are regular on the edges and satisfy the appropriate compatibility conditions at the vertices. Therefore, the functions in $\VE$ satisfy assumption $\mathcal{H}_t(\E)$,  for some  $t \ge 2$ depending on the  curved edge parametrization regularity (which, we recall, it is at least $C^{k+1}$). 
Furthermore, functions in the space $\VE + \P_1(\E)$ also satisfy this assumption (again, thanks to the regularity of the edges). This observation will be useful in the forthcoming analysis, in particular for Lemma~\ref{lemma:coercivity:S_E}.
\end{remark} 

Now we observe that for any function $w$ satisfying assumption $\mathcal{H}_t(\E)$, for some $t \ge 2$, the following Poincar\'e-type estimate in one-dimension is easy to check
\begin{equation*}
	\|w\|^2_{0,e} \lesssim h^2_{\E}|w|^2_{1,e} +h_{\E} |w(\vb_e^{{\rm ex}})|^2 \qquad \text{and} \qquad 	
	|w|^2_{1,e} \lesssim h^2_{\E}|w|^2_{2,e} +h_{\E} |\nabla w(\vb_e^{{\rm ex}})|^2,	
\end{equation*}
where $\vb_e^{{\rm ex}}$ is an extremal point of $e$ (note that here $\nabla w(\vb_e^{{\rm ex}})$ is well defined since both the tangent and normal derivatives along the edge are in $H^1$). 
Thus, we can deduce that
\begin{equation*}
	\begin{split}
		h^{-3}_{\E}\|w\|^2_{0,e} \lesssim h^{-1}_{\E}|w|^2_{1,e} +h^{-2}_{\E} |w(\vb_e^{{\rm ex}})|^2 \qquad \text{and} \qquad 	
		h^{-1}_{\E}|w|^2_{1,e} \lesssim h_{\E}|w|^2_{2,e} + |\nabla w(\vb_e^{{\rm ex}})|^2,	
	\end{split}
\end{equation*}
which implies 
\begin{equation*}
		\begin{split}
h^{-3}_{\E}\|w\|^2_{0,e} + h^{-1}_{\E}|w|^2_{1,e} &\lesssim h_{\E}|w|^2_{2,e} + |\nabla w(\vb_e^{{\rm ex}})|^2 + h^{-2}_{\E} |w(\vb_e^{{\rm ex}})|^2.
	\end{split}
\end{equation*}

Moreover, by analogous arguments
\begin{equation*}
h^{-1}_{\E}\|\partial_{\bn_{\E}} w\|^2_{0,e} \lesssim 
h_{\E} |\partial_{\bn_{\E}} w|^2_{1,e} + |\nabla w(\vb_e^{{\rm ex}})|^2 .
\end{equation*}

From the above bounds and the definition of  $\mathcal{S}_{\E}(\cdot,\cdot)$ we can conclude that for all function $w$ satisfying assumption $\mathcal{H}_t(\E)$ (for some $t \ge 2$) it holds
\begin{equation}\label{pre:coerc:S_E}
\mathcal{S}_E(w,w) \gtrsim \sum_{e \in \partial\E} \Big( h^{-1}_{\E}\|\partial_{\bn_{\E}} w\|^2_{0,e} + h_{\E} |\partial_{\bn_{\E}} w|^2_{1,e}  + h^{-3}_{\E}\|w\|^2_{0,e} + h^{-1}_{\E}|w|^2_{1,e} + h_{\E}|w|^2_{2,e} \Big).  
	\end{equation} 

The above property will be fundamental to prove the coercivity of the stabilizing form $\calS_E(\cdot,\cdot)$ in the space $\VE+ \P_1(\E)$, which is established in the following result.

\begin{lemma}\label{lemma:coercivity:S_E}
Under Assumptions $\mathbf{(A0)}-\mathbf{(A2)}$ it holds
\begin{equation*}
\mathcal{S}_E(v_h,v_h) \gtrsim  |v_h|^2_{2,\E} \qquad \forall v_h \in \VE + \P_1(\E),  \quad \text{and} \quad   \forall \E \in \CT_h,
\end{equation*}
where the hidden constant is independent of $h_E$.
\end{lemma}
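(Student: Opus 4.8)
The plan is to prove the lower bound $\mathcal{S}_E(v_h,v_h) \gtrsim |v_h|_{2,\E}^2$ on the enlarged space $\VE + \P_1(\E)$ by combining the $H^2$-orthogonal decomposition of Lemma~\ref{H2:decomp} with the boundary control already encoded in~\eqref{pre:coerc:S_E}. First I would fix $v_h \in \VE + \P_1(\E)$; by Remark~\ref{remark:H_t} such a function satisfies assumption $\mathcal{H}_t(\E)$ for some $t\ge 2$, so~\eqref{pre:coerc:S_E} applies and gives
\begin{equation*}
\mathcal{S}_E(v_h,v_h) \gtrsim \sum_{e\in\partial\E}\Big( h^{-1}_{\E}\|\partial_{\bn_{\E}} v_h\|^2_{0,e} + h^{-3}_{\E}\|v_h\|^2_{0,e} + h^{-1}_{\E}|v_h|^2_{1,e}\Big).
\end{equation*}
Next I would invoke Lemma~\ref{H2:decomp} to write $v_h = v_1 + v_2$ with $v_1$ biharmonic in $\E$ carrying the boundary traces of $v_h$, $v_2 \in H_0^2(\E)$ capturing $\Delta^2 v_h \in \P_{k-4}(\E)$, and $|v_h|_{2,\E}^2 = |v_1|_{2,\E}^2 + |v_2|_{2,\E}^2$. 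I would then estimate the two pieces separately: for $v_1$, the scaled continuous-dependence-on-the-data estimate for biharmonic functions (used already as~\eqref{cont:dep:bihar}, from~\cite[pp.~17]{GR}) bounds $|v_1|_{2,\E}^2$ by the scaled boundary norms $|||\dn v_1|||^2_{1/2,e} + |||v_1|||^2_{3/2,e}$ summed over edges; since $v_1$ and $v_h$ share the same Dirichlet and Neumann traces on $\partial\E$, these are exactly the boundary quantities appearing (up to the inclusion $|\cdot|_{1/2,e}$ versus $|\cdot|_{1,e}$, which we control on edges) in~\eqref{pre:coerc:S_E}, hence $|v_1|_{2,\E}^2 \lesssim \mathcal{S}_E(v_h,v_h)$.

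For the interior component $v_2 \in H_0^2(\E)$, I would use that $\Delta^2 v_2 = \Delta^2 v_h$ is a polynomial of degree $k-4$, integrate by parts (the boundary terms vanish since $v_2 \in H_0^2(\E)$) to get $|v_2|_{2,\E}^2 = (\Delta v_2, \Delta v_2)_{0,\E}$ up to lower-order Hessian/Laplacian identities, and then $|v_2|_{2,\E}^2 \approx \int_\E v_2\, \Delta^2 v_h\,{\rm d}\E = \int_\E v_2\, \Delta^2 v_h\,{\rm d}\E$. Writing $\Delta^2 v_h = \sum_j c_j m_j$ in the monomial basis $\M_{k-4}(\E)$ and recognizing $h_\E^{-2}\int_\E v_2 m_j\,{\rm d}\E$ in terms of the internal DoFs, one bounds this by $\|v_2\|_{0,\E}$ times $h_\E^2 \|\mathbf{c}\|_{\ell^2}$ (Lemma~\ref{lemma:l2:norm:equi}), where $\mathbf{c}$ can in turn be recovered from the DoFs $\boldsymbol{D}^{\boldsymbol\circ}(v_h)$ via the same norm equivalence; after a Poincaré--Friedrichs inequality ($v_2 \in H_0^2(\E)$ so $\|v_2\|_{0,\E}\lesssim h_\E^2|v_2|_{2,\E}$) and absorbing $|v_2|_{2,\E}$, this yields $|v_2|_{2,\E}^2 \lesssim \mathcal{S}_E^{\boldsymbol\circ}(v_h,v_h) \le \mathcal{S}_E(v_h,v_h)$. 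A small care point: the internal DoFs of $v_h$ and $v_2$ differ by those of $v_1$; I would handle this by noting that $v_1$ is biharmonic so its moments against $\P_{k-4}$ are controlled (again by integration by parts against a polynomial whose bilaplacian vanishes only in the interior — more precisely one estimates $\int_\E v_1 m_j$ directly via $\|v_1\|_{0,\E}$ and a trace/Poincaré bound tying $\|v_1\|_{0,\E}$ back to the boundary norms already dominated by $\mathcal{S}_E$).

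Adding the two contributions gives $|v_h|_{2,\E}^2 = |v_1|_{2,\E}^2 + |v_2|_{2,\E}^2 \lesssim \mathcal{S}_E(v_h,v_h)$, which is the claim. The main obstacle I anticipate is the bookkeeping in the $v_2$ estimate: one must pass from $\int_\E v_2\,\Delta^2 v_h$ to a quantity genuinely controlled by the \emph{internal DoFs of $v_h$} (not of $v_2$), which forces the separate control of the moments of the biharmonic part $v_1$ and a careful use of the two norm equivalences (Lemma~\ref{lemma:l2:norm:equi}) together with Poincaré--Friedrichs to close the loop without circularity. A secondary technical point is matching the half-order edge seminorm $|\cdot|_{1/2,e}$ coming from the biharmonic stability estimate~\eqref{cont:dep:bihar} against the integer-order edge quantities $|\cdot|_{1,e}$ and $|\cdot|_{2,e}$ present in $\mathcal{S}_E$; this is resolved by interpolation/trace inequalities on the single edge $e$ (valid uniformly by Assumptions $\mathbf{(A1)}$--$\mathbf{(A2)}$ and the fixed regularity of $\gamma$), but it is the place where the specific design of the new stabilizing form~\eqref{eq:new:stab:form} — in particular the inclusion of the $h_\E\|\partial_{\tb_\E}^2 v_h\|_{0,e}^2$ and $h_\E\|\partial_{\tb_\E}\partial_{\bn_\E} v_h\|_{0,e}^2$ terms — is essential.
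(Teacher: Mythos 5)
Your architecture is exactly the paper's: the $H^2$-orthogonal decomposition $v_h=v_1+v_2$ of Lemma~\ref{H2:decomp}; the scaled continuous-dependence estimate for the biharmonic part $v_1$, combined with~\eqref{pre:coerc:S_E} and edge interpolation to bridge the half-integer boundary seminorms, giving $|v_1|^2_{2,\E}\lesssim\mathcal{S}_E(v_h,v_h)$; and a moment argument for $v_2$ in which the cross term $\int_\E \Delta^2 v_2\, v_1$ is handled through $\|v_1\|_{0,\E}$, Poincar\'e--Friedrichs and the boundary quantities already dominated by $\mathcal{S}_E$. Your explicit flagging of the mismatch between the internal DoFs of $v_h$ and the moments of $v_2$ is resolved in the paper precisely as you propose.

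The one step that, as written, does not close is the main term of the $v_2$ estimate. You bound $\int_\E v_2\,\Delta^2 v_h$ by $\|v_2\|_{0,\E}$ times (a power of $h_\E$ times) $\|\mathbf{c}\|_{\ell^2}$ and then invoke the $H^2_0$-Poincar\'e inequality $\|v_2\|_{0,\E}\lesssim h_\E^2|v_2|_{2,\E}$ to absorb. But by Lemma~\ref{lemma:l2:norm:equi} and a polynomial inverse estimate, $h_\E\|\mathbf{c}\|_{\ell^2}\approx\|\Delta^2 v_2\|_{0,\E}\lesssim h_\E^{-2}|v_2|_{2,\E}$, so this chain returns $|v_2|^2_{2,\E}\lesssim|v_2|^2_{2,\E}$: a tautology in which the stabilization never enters. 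Moreover, $\mathbf{c}$ (the coefficients of $\Delta^2 v_h$) cannot be ``recovered from $\boldsymbol{D}^{\boldsymbol{\circ}}(v_h)$ via the same norm equivalence''; Lemma~\ref{lemma:l2:norm:equi} relates the coefficients of a polynomial to its own $L^2$ norm, not to moments of $v_h$. The correct closing --- and what the paper does --- is to avoid the continuous Cauchy--Schwarz on the $v_h$-pairing altogether: write $\int_\E v_h\,\Delta^2 v_2=h_\E^2\sum_j c_j\,\boldsymbol{D}^{\boldsymbol{\circ}}_j(v_h)$ and apply the \emph{discrete} Cauchy--Schwarz, so that one factor is literally $\mathcal{S}^{\boldsymbol{\circ}}_E(v_h,v_h)^{1/2}$ and the other is $h_\E^{3}\|\mathbf{c}\|_{\ell^2}\approx h_\E^{2}\|\Delta^2 v_2\|_{0,\E}\lesssim|v_2|_{2,\E}$, the last step being the inverse estimate $\|\Delta^2 v_2\|_{0,\E}\lesssim h_\E^{-2}|v_2|_{2,\E}$ of~\eqref{xx:new} (which you never invoke, but which is the inequality that actually permits the absorption; the $H^2_0$-Poincar\'e on $v_2$ is not the right tool here). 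With that replacement your argument coincides with the paper's.
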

\begin{proof}
Let $v_h \in  \VE + \P_1(\E) \subset H^2(\E)$ and $v_1 \in H^2(\E)$ and $v_2 \in H_0^2(\E)$ such that Lemma~\ref{H2:decomp} holds true.  
First, we will show that $|v_1|^2_{2,\E} \lesssim \calS_E(v_h,v_h)$.  Indeed, as in~\eqref{cont:dep:bihar} we use the continuous dependence of the data  and the construction of  $v_1$,
as follows
\begin{equation}\label{cont:dep:II}
|v_1|^2_{2,\E} \lesssim 
\sum_{e \in \dE} (|||\partial_{\bn_E} v_h|||^2_{1/2,e}+|||v_h|||^2_{3/2,e})=: \sum_{e \in \dE} (T_{1,e}+T_{2,e}).
\end{equation}  

Using property~\eqref{pre:coerc:S_E} and Remark~\ref{remark:H_t}, we immediately obtain
\begin{equation*}
h^{-1}_{\E}	\|\partial_{\bn_E} v_h\|^2_{0,e} \lesssim  \calS_E(v_h,v_h).
\end{equation*}

Now, by employing the real interpolation method, the Young inequality, and again property~\eqref{pre:coerc:S_E}, we get
\begin{equation*}
\begin{split}
	|\partial_{\bn_E} v_h|^2_{1/2,e} \lesssim \|\partial_{\bn_E} v_h\|_{0,e} |\partial_{\bn_E} v_h|_{1,e}
	\lesssim  h^{-1}_{\E} \|\partial_{\bn_E} v_h\|^2_{0,e}  +h_{\E} |\partial_{\bn_E} v_h|^2_{1,e}
	 \lesssim \calS_E(v_h,v_h).
\end{split}
\end{equation*}

Therefore, for the term $T_{1,e}$, we have the following bound
\begin{equation}\label{I_e:stab}
T_{1,e} = |||\partial_{\bn_E} v_h|||^2_{1/2,e} = h^{-1}_{\E} \|\partial_{\bn_E} v_h\|^2_{0,e}  +  |\partial_{\bn_E} v_h|^2_{1/2,e}\lesssim   \calS_E(v_h,v_h).
\end{equation}

Analogously, by using property~\eqref{pre:coerc:S_E} and Remark~\ref{remark:H_t}, we can derive  
\begin{equation*}
h^{-3}_{\E} \|v_h \|^2_{0,e} \lesssim \calS_E(v_h,v_h) \quad \text{and} \quad 
|v_h|^2_{3/2,e} \lesssim |v_h|_{1,e} |v_h|_{2,e} \lesssim  \calS_E(v_h,v_h) .
\end{equation*}

Thus, by combining the above estimates we have 
\begin{equation}\label{II_e:stab}
T_{2,e} = ||| v_h|||^2_{3/2,e} = h^{-3}_{\E} \| v_h\|^2_{0,e}  +  | v_h|^2_{3/2,e}
\lesssim   \calS_E(v_h,v_h).
\end{equation}
Inserting~\eqref{I_e:stab} and \eqref{II_e:stab} in~\eqref{cont:dep:II}, we conclude 
\begin{equation}\label{bound:v_1}
|v_1|^2_{2,\E} \lesssim  \calS_E(v_h,v_h).
\end{equation}

Now, we will analyze the second part of the $H^2$-decomposition. Indeed, let $g:=\Delta^2 v_h=\Delta^2 v_2 \in \P_{k-4}(\E)$, then  using the fact that $v_h=v_1+v_2$ and an integration by parts, it follows that
\begin{equation}\label{ident:integration}
|v_2|^2_{2,\E} = \int_{\E} v_2\Delta^2 v_2 \:{\rm d}\E=  \int_{\E} g v_2 \:{\rm d}\E
=  \int_{\E} g v_h \: {\rm d}\E- \int_{\E} g v_1 \:{\rm d}\E=: T_{1,\E}+ T_{2,\E}.
\end{equation}
 
Since $g \in \P_{k-4}(\E)$, then we can write 
\begin{equation*}
g =\sum_{j=1}^{(k-3)(k-2)/2}  g_j  m_j.
\end{equation*}

Then, from the definition of $\boldsymbol{D^{\circ}}$, the Cauchy-Schwarz inequality for sequences, Lemma~\ref{lemma:l2:norm:equi}, the first term can be bounded as follows
\begin{equation*}
\begin{split}
T_{1,\E} &=  \int_{\E} gv_h  \: {\rm d}\E =  \sum_{j=1}^{(k-3)(k-2)/2}  g_j \int_{\E} m_j v_h\: {\rm d}\E 
=  \sum_{j=1}^{(k-3)(k-2)/2}  h^{2}_E \,  g_j  \, \boldsymbol{D}_j^{\boldsymbol{\circ}}(v_h)\\
&\lesssim h^3_{\E} \Bigg(\sum_{j=1}^{(k-3)(k-2)/2}  g^2_j\Bigg)^{1/2}  \Bigg(\sum_{j=1}^{(k-3)(k-2)/2} h^{-2}_{\E} \boldsymbol{D}_j^{\boldsymbol{\circ}}(v_h)^2\Bigg)^{1/2}\\
&  \lesssim h^3_{\E} \|\gb\|_{\ell^2}\, \calS^{\boldsymbol{\circ}}_E(v_h,v_h)^{1/2} \\
& \lesssim h^2_{\E} \|\Delta^2 v_2\|_{0,\E}\,\calS_E(v_h,v_h)^{1/2}.
	\end{split}
\end{equation*}

We now observe that, exploiting the fact that $\Delta^2 v_2$ is a polynomial function in $\E$, we can apply arguments similar to those in \cite[Lemma 6.3]{BLR:stab}, leading to
\begin{equation}\label{xx:new}
\|\Delta^2 v_2\|_{0,\E} \lesssim  h^{-2}_{\E} |v_2|_{2,\E} .
\end{equation}
Thus, 
\begin{equation}\label{eq:I:v_h}
T_{1,\E} \lesssim 	|v_2|_{2,\E} \,\calS_E(v_h,v_h)^{1/2}.
\end{equation}

On the other hand, by applying again the Cauchy-Schwarz and~\eqref{xx:new}  
we have
\begin{equation}\label{pre:eq:II:v_h}
\begin{split}
T_{2,\E} =  \int_{\E} gv_1 \: {\rm d}\E   \leq \|\Delta^2 v_2\|_{0,\E} \|v_1\|_{0,\E}  \lesssim 
h^{-2}_{\E} |v_2|_{2,\E}\|v_1\|_{0,\E}. 	
\end{split}
\end{equation}

The goal now is to show that $h^{-2}_{\E} \|v_1\|_{0,\E} \lesssim \calS_E(v_h,v_h)^{1/2}$. 
To this end, recalling that on $\partial E$ it holds $v_h=v_1$, $\dn v_h = \dn v_1$, 
we first start applying the Poincar\'e--Friedrichs inequality~\eqref{Poincare:ineq} and H\"{o}lder inequality. We then proceed using~\eqref{pre:coerc:S_E} and \eqref{bound:v_1} to obtain
\begin{equation*}
\begin{split}
\|v_1\|_{0,\E} & \lesssim h^2_{\E} |v_1|_{2,\E} + \left|\int_{\partial \E } v_1 \, {\rm d}s \right| + h_{\E}\left|\int_{\partial \E } \nabla v_1  \, {\rm d}s \right|\\
& \lesssim h^2_{\E} |v_1|_{2,\E} +h^{1/2}_{\E} \|v_h\|_{0,\partial\E}
+ h^{3/2}_{\E} |v_h|_{1,\partial\E}\\
& \lesssim h^2_{\E} |v_1|_{2,\E} + h^2_{\E}\,\calS_E(v_h,v_h)^{1/2}\\
 & \lesssim h^2_{\E}\,\calS_E(v_h,v_h)^{1/2}.
\end{split}
\end{equation*}

Then, from  the above bound and~\eqref{pre:eq:II:v_h}, we conclude
\begin{equation*}
T_{2,\E}  \lesssim |v_2|_{2,\E} \,\calS_E(v_h,v_h)^{1/2}.
\end{equation*}

Therefore, by combining~\eqref{eq:I:v_h}, the above estimate and~\eqref{ident:integration}, it follows that
\begin{equation}\label{bound:v_2}
|v_2|_{2,\E}^2 \lesssim \calS_E(v_h,v_h).
\end{equation} 

Finally, the desired result follows by inserting estimates~\eqref{bound:v_1} and~\eqref{bound:v_2} into~\eqref{H2:orthonal}. 

\end{proof}

We have the following stability result.
\begin{proposition}\label{prop:stability:coerc}
There exist a positive uniform constants $\alpha_{*}$ such that for any element $E \in \CT_h$ it holds that
\begin{equation*}
		a_E^h (v_h, \, v_h) \geq \alpha_* \, a_E (v_h, \, v_h) \, \qquad \forall v_h \in \VE.
\end{equation*}
As a consequence the global bilinear form $a^h(\cdot,\cdot)$ is coercive in $\Vh$.
\end{proposition}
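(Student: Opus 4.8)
The plan is to establish the local coercivity estimate $a_E^h(v_h,v_h) \geq \alpha_* \, a_E(v_h,v_h)$ for all $v_h \in \VE$, and then obtain the global statement by summing over elements. By the definition of $a_E^h(\cdot,\cdot)$ and the fact that $a_E(\PiK v_h, \PiK v_h) \leq a_E(v_h, v_h)$ (which follows from the minimization property implicit in the definition of $\PiK$, since $\PiK$ is an orthogonal-type projection with respect to $a_E$), it suffices to control $a_E(v_h,v_h)$ from above by $a_E(\PiK v_h, \PiK v_h)$ plus a multiple of the stabilizing term $\mathcal{S}_E((I-\PiK)v_h,(I-\PiK)v_h)$. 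Writing $v_h = \PiK v_h + (I-\PiK)v_h$ and using the triangle inequality together with $a_E(w,w) = |w|_{2,E}^2$, the crux reduces to showing
\begin{equation*}
|(I-\PiK)v_h|_{2,E}^2 \lesssim \mathcal{S}_E\big((I-\PiK)v_h, (I-\PiK)v_h\big).
\end{equation*}

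First I would set $w_h := (I - \PiK)v_h$. The key observation is that $w_h \in \VE + \P_k(E)$, and more precisely, since $\PiK v_h \in \P_k(E)$, the function $w_h$ belongs to $\VE + \P_k(E)$; but the essential point for applying Lemma~\ref{lemma:coercivity:S_E} is that we can reduce to $\VE + \P_1(E)$. Here is where the structure of $\PiK$ matters: by construction of $\PiK$ (see~\eqref{Ritz:operator}), the function $w_h = (I-\PiK)v_h$ has the property that its boundary averages vanish, namely $\Pi^0_{\partial E}(w_h) = 0$ and $\Pi^0_{\partial E}(\nabla w_h) = 0$. The plan is to exploit this by writing $w_h = v_h - \PiK v_h$ and noting that $v_h \in \VE$, while $\PiK v_h \in \P_k(E)$; we decompose $\PiK v_h$ into its linear part $p_1 \in \P_1(E)$ and higher-order part. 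The higher-order part of $\PiK v_h$, however, does \emph{not} lie in $\VE$ for curved elements, which is precisely the obstruction noted in Remark~\ref{remark:non:poly:containing}. The way around this is to observe that $\mathcal{S}_E(\cdot,\cdot)$ is defined on the sum space $\VVE = \VE + \P_k(E)$ and to apply Lemma~\ref{lemma:coercivity:S_E} only after first using the polynomial invariance: since $a_E(q,q') = \mathcal{S}_E(\cdot,\cdot)$-related quantities behave well on $\P_k$, and more cleanly, since $\mathcal{S}_E$ is a genuine semi-norm controlling $|\cdot|_{2,E}$. I would argue: any $w_h \in \VVE$ with vanishing boundary averages of $w_h$ and $\nabla w_h$ can be handled by combining the coercivity on $\VE + \P_1(E)$ (Lemma~\ref{lemma:coercivity:S_E}) with the fact that the part of $w_h$ in $\P_k(E) \setminus \P_1(E)$ is itself controlled, e.g., via Lemma~\ref{lemma:l2:norm:equi} and the internal DoFs. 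Concretely, one splits $w_h = \widetilde{w}_h + r$, with $r$ the $\P_1(E)$-part, and notes $\widetilde w_h \in \VE + \P_1(E)$ only when $r$ absorbs the full polynomial obstruction — so a cleaner route is to first establish Lemma~\ref{lemma:coercivity:S_E}-type coercivity directly on $\VVE$ for functions with zero boundary averages, which follows by the same $H^2$-decomposition argument (Lemma~\ref{H2:decomp}) since $\Delta^2 w_h \in \P_{k-4}(E)$ holds for $w_h \in \VVE$ as well (because $\Delta^2$ of a degree-$k$ polynomial lies in $\P_{k-4}(E)$).

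So the detailed steps are: (1) set $w_h = (I-\PiK)v_h$ and record that $\Pi^0_{\partial E}w_h = 0$, $\Pi^0_{\partial E}(\nabla w_h) = 0$, and $\Delta^2 w_h \in \P_{k-4}(E)$; (2) apply the $H^2$-orthogonal decomposition $w_h = w_1 + w_2$ of Lemma~\ref{H2:decomp}, so $|w_h|_{2,E}^2 = |w_1|_{2,E}^2 + |w_2|_{2,E}^2$; (3) bound $|w_1|_{2,E}^2 \lesssim \mathcal{S}_E(w_h,w_h)$ by the continuous-dependence-on-data estimate for biharmonic functions as in~\eqref{cont:dep:II}, combined with property~\eqref{pre:coerc:S_E} — here I must check that $w_h$ satisfies assumption $\mathcal{H}_t(E)$, which it does since $\VE + \P_k(E)$ functions are regular on edges by Remark~\ref{remark:H_t}; (4) bound $|w_2|_{2,E}^2 \lesssim \mathcal{S}_E(w_h,w_h)$ exactly as in the proof of Lemma~\ref{lemma:coercivity:S_E}, using integration by parts $|w_2|_{2,E}^2 = \int_E (\Delta^2 w_h)\, w_h$, the internal DoFs, the norm equivalence of Lemma~\ref{lemma:l2:norm:equi}, the polynomial inverse estimate $\|\Delta^2 w_2\|_{0,E} \lesssim h_E^{-2}|w_2|_{2,E}$ from~\eqref{xx:new}, and the Poincaré--Friedrichs bound~\eqref{Poincare:ineq} together with step (3) to control $h_E^{-2}\|w_1\|_{0,E}$. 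Then (5) add these up to get $|w_h|_{2,E}^2 \lesssim \mathcal{S}_E(w_h,w_h) = \mathcal{S}_E((I-\PiK)v_h,(I-\PiK)v_h)$; (6) combine with $a_E(\PiK v_h,\PiK v_h) \leq a_E(v_h,v_h)$ and the triangle inequality to conclude $a_E(v_h,v_h) \lesssim a_E^h(v_h,v_h)$, i.e., $a_E^h(v_h,v_h) \geq \alpha_* a_E(v_h,v_h)$ with $\alpha_*$ uniform; (7) sum over $E \in \CT_h$ using $a^h = \sum_E a_E^h$ and $a = \sum_E a_E$, and invoke $V$-ellipticity of $a(\cdot,\cdot)$ to deduce coercivity of $a^h(\cdot,\cdot)$ on $\Vh$.

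The main obstacle I anticipate is step (3)--(4) applied to $w_h = (I-\PiK)v_h$ rather than to an element of $\VE + \P_1(E)$: Lemma~\ref{lemma:coercivity:S_E} is stated precisely for $\VE + \P_1(E)$, whereas $w_h$ a priori sits in the larger space $\VE + \P_k(E)$. Resolving this requires noticing that the \emph{entire} argument of Lemma~\ref{lemma:coercivity:S_E} (the $H^2$-decomposition, the continuous dependence estimate, and the integration-by-parts bound using $\Delta^2 w_h \in \P_{k-4}(E)$) goes through verbatim for any $w_h \in \VVE$ that additionally has zero boundary averages — a property enjoyed by $(I-\PiK)v_h$ by the very definition of $\PiK$ — so no genuinely new estimate is needed, only a careful re-tracing. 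The only subtlety is verifying the edge-regularity hypothesis $\mathcal{H}_t(E)$ for $w_h$, which holds because both $\VE$ and $\P_k(E)$ consist of functions smooth on edges (Remark~\ref{remark:H_t}), and that the polynomial inverse inequality~\eqref{xx:new} still applies since $\Delta^2 w_2 = \Delta^2 w_h$ is polynomial.
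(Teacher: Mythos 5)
Your argument is correct, but it follows a genuinely different route from the paper's. The paper first subtracts the linear projection $\overline\Pi_{\E} v_h := \Pi_{\E}^{\D,1} v_h$ so as to work with $\widetilde v = v_h - \overline\Pi_{\E} v_h \in \VE + \P_1(\E)$, applies Lemma~\ref{lemma:coercivity:S_E} exactly as stated to get $a_E(v_h,v_h) = |\widetilde v|_{2,\E}^2 \lesssim \calS_E(\widetilde v,\widetilde v)$, and then splits $\calS_E(\widetilde v,\widetilde v)$ into $\calS_E(\PiK\widetilde v,\PiK\widetilde v) + \calS_E((I-\PiK)v_h,(I-\PiK)v_h)$; the first term is controlled by $|\PiK v_h|_{2,\E}^2$ via the continuity bound of Proposition~\ref{prop:stability:S_E} (applicable because $\overline\Pi_{\E}(\PiK\widetilde v)=0$) together with a polynomial inverse inequality absorbing $h_{\E}|\PiK\widetilde v|_{5/2,\E}^2$. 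You instead split $|v_h|_{2,\E}^2$ by the triangle inequality and reduce everything to the single estimate $|(I-\PiK)v_h|_{2,\E}^2 \lesssim \calS_E((I-\PiK)v_h,(I-\PiK)v_h)$, which forces you to extend Lemma~\ref{lemma:coercivity:S_E} from $\VE+\P_1(\E)$ to the full sum space $\VVE$. You correctly flag this as the main obstacle, and your resolution is sound: the proof of that lemma uses only that the function satisfies $\mathcal{H}_t(\E)$, that its bilaplacian lies in $\P_{k-4}(\E)$, the $H^2$-decomposition of Lemma~\ref{H2:decomp}, and property~\eqref{pre:coerc:S_E} --- all of which hold on $\VVE$ --- while the vanishing boundary averages of $(I-\PiK)v_h$ only simplify the Poincar\'e--Friedrichs step. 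What your route buys is that you never need Proposition~\ref{prop:stability:S_E} nor the $h_{\E}|\cdot|_{5/2,\E}$ inverse estimate; what it costs is a re-proof of the coercivity lemma on a larger space than the one for which the paper formulates it, so as written your proof cannot simply cite Lemma~\ref{lemma:coercivity:S_E} but must re-trace it. (A minor remark: the Pythagoras inequality $a_E(\PiK v_h,\PiK v_h)\le a_E(v_h,v_h)$ invoked in your step (6) is true but not needed for the coercivity direction.)
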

\begin{proof}
In order to simplify the following notation, we temporarily denote by ${\overline\Pi}_E : = \Pi_E^{{\bf D},1}$, which is the operator defined in \eqref{Ritz:operator} for the particular case $k=1$. Such operator, acting from $\VE$ onto ${\mathbb P}_1(E)$, preserves by construction the boundary integral of the function and its gradient.

Let $v_h \in \VE$, then we set $\widetilde{v}:= (v_h-\overline\Pi_E v_h) \in \VE + \P_1(\E)$. First, we note that from definitions~\eqref{Ritz:operator} and~\eqref{averag:operator}, we deduce  $(I-\PiK)\widetilde{v}=(I-\PiK)v_h$. Now,
by employing Lemma~\ref{lemma:coercivity:S_E}, some algebraic manipulations and the above observation, we obtain
\begin{equation}\label{pre:coerc:I}
\begin{split}
a_E(v_h,v_h) &= a_E(\widetilde{v},\widetilde{v})  \lesssim \calS_E(\widetilde{v},\widetilde{v}) 	 \lesssim \calS_E(\PiK\widetilde{v},\PiK\widetilde{v}) +\calS_E((I-\PiK)\widetilde{v},(I-\PiK)\widetilde{v}) \\ 
& = \calS_E(\PiK\widetilde{v},\PiK\widetilde{v}) +\calS_E((I-\PiK)v_h,(I-\PiK)v_h).
\end{split}
\end{equation} 

Again, from definitions~\eqref{Ritz:operator} and~\eqref{averag:operator} it follows that 
$ {\overline\Pi}_E (\PiK\widetilde{v}) = 
{\overline\Pi}_E \widetilde{v}  = 0$,
thus, by using property~\eqref{continuity:S_E} of Proposition~\ref{prop:stability:S_E},  we get
\begin{equation*}
	\calS_E(\PiK\widetilde{v},\PiK\widetilde{v}) \lesssim |\PiK\widetilde{v}|^2_{2,\E}	+ h_{\E}  |\PiK\widetilde{v}|^2_{5/2,\E}.
\end{equation*}

Now, we observe that inserting the above bound in~\eqref{pre:coerc:I} we can infer
\begin{equation}\label{pre:coerc:II}
a_E(v_h,v_h) \lesssim |\PiK\widetilde{v}|^2_{2,\E}	+ h_{\E}  |\PiK\widetilde{v}|^2_{5/2,\E}+\calS_E((I-\PiK)v_h,(I-\PiK)v_h).
\end{equation}

By using standard polynomial inverse inequality on star-shaped polygons we have
\begin{equation*}
h_{\E} |\PiK\widetilde{v}|^2_{5/2,\E} \lesssim |\PiK\widetilde{v}|^2_{2,\E}.	
\end{equation*}

Moreover,  from definition of operator $\PiK$ in \eqref{Ritz:operator}, 
we easily observe that $|\PiK\widetilde{v}|^2_{2,\E} =|\PiK v_h|^2_{2,\E}$. 
Therefore, by combining the above facts and~\eqref{pre:coerc:II}, we obtain the desired result.
\end{proof}

\begin{remark}\label{remark:error:estimate}
We recall that, differently from the Poisson problem in \cite{BRV2019_M2AN}, the space $\VE$ does not contain the kernel of $a_{\E}(\cdot,\cdot)$, i.e., $\P_1(\E) \nsubseteq \VE$ (see Remark~\ref{remark:non:poly:containing} and definition~\eqref{Ritz:operator}). However, due to  Lemma~\ref{lemma:coercivity:S_E} it is still possible to obtain the first inequality in estimate \eqref{pre:coerc:I}, hence the coercivity of the discrete bilinear form 
$a^h_{\E}(\cdot,\cdot)$.  Some intuitive explanation on the importance of the kernel can be found also in Remark~\ref{Remark:new:stab}.
The aforementioned fact is our main motivation for introducing the new stabilization term $\mathcal{S}_{\E}(\cdot, \cdot)$ (cf. \eqref{eq:new:stab:form}).
\end{remark}
%
\subsection{A priori error  estimates}
In this subsection we will provide an a priori error analysis for our conforming virtual element scheme. We start by recalling a bound for the load approximation error. We omit the simple proof since it follows from standard VEM arguments, see for instance~\cite{BM13}.

\begin{proposition}\label{func-bound}
	Let $2 < s \leq k +1$ and 
	\begin{equation}\label{def:alpha}
		\alpha =\begin{cases}
			0   &\text{if  $k =2,3$},\\
			1  &\text{if  $k =4$},\\
			k-3  &\text{if  $k >4$ and $s>3$.}	
		\end{cases}
	\end{equation}
Then, under assumptions ${\bf (A1)}$-${\bf (A2)}$ and $f \in  H^{\alpha}(\O)$, we have the following estimate
	\begin{equation*}
		\|f-f_h\|:= \sup_{v_h \in \Vh} \frac{|(f,v_h)_{0,\O}-\langle f_h, v_h \rangle|}{|v_h|_{2,\O}}\lesssim h^{s-2}|f|_{\alpha,\O}.  
	\end{equation*}
\end{proposition}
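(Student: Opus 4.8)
The plan is to estimate the load consistency error $|(f,v_h)_{0,\O} - \langle f_h, v_h\rangle|$ element by element, distinguishing the three cases in the definition of $f_h$ and $\langle f_h, v_h\rangle$. For each element $E$, the key algebraic identity exploits the orthogonality of the $L^2$-projection appearing in the definition of $f_h|_E$ against an appropriate polynomial space, so that we can subtract a suitable polynomial from $v_h$ (or from $\widehat\Pi_E^1 v_h$) for free before applying Cauchy--Schwarz.

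First I would treat the case $k > 4$, which is the cleanest: here $f_h|_E = \Pi_E^{k-4} f$ and $\langle f_h, v_h\rangle = \sum_E (f_h, v_h)_{0,E}$. Using that $\Pi_E^{k-4}$ is the $L^2$-projection onto $\P_{k-4}(E)$, for any $p \in \P_{k-4}(E)$ one has $(f - \Pi_E^{k-4} f, v_h)_{0,E} = (f - \Pi_E^{k-4} f, v_h - p)_{0,E}$; choosing $p$ to be a polynomial approximation of $f$ from Lemma~\ref{Lemma:poli:approx} applied with degree $k-4$ (noting $s - 2 \le k - 3 = (k-4)+1$ when $s \le k+1$) and a polynomial approximation of $v_h$, then using Cauchy--Schwarz and the Poincaré-type bound $\|v_h - p\|_{0,E} \lesssim h_E^2 |v_h|_{2,E}$, gives the local bound $h_E^{s-2} |f|_{k-3,E} |v_h|_{2,E}$. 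Summing over $E$ and using discrete Cauchy--Schwarz produces the claimed estimate with $\alpha = k-3$. For the cases $k=2,3$ and $k=4$, the extra ingredient is the modified test function $\widehat\Pi_E^1 v_h$: one writes $(f, v_h)_{0,E} - (f_h, \widehat\Pi_E^1 v_h)_{0,E} = (f - f_h, v_h)_{0,E} + (f_h, v_h - \widehat\Pi_E^1 v_h)_{0,E}$, observing that by construction $\widehat\Pi_E^1 v_h$ is a (computable surrogate of a) degree-one polynomial reproducing the relevant averaged data of $v_h$, so that $v_h - \widehat\Pi_E^1 v_h$ behaves like $v_h$ minus its linear part and satisfies $\|v_h - \widehat\Pi_E^1 v_h\|_{0,E} \lesssim h_E^2 |v_h|_{2,E}$ by a Poincaré--Friedrichs argument analogous to \eqref{Poincare:ineq}; the term $(f-f_h, v_h)_{0,E}$ is handled as before using the appropriate projection order ($\Pi_E^0$ for $k=2,3$, $\Pi_E^1$ for $k=4$), which is why for $k=2,3$ only $|f|_{0,\O}$ (i.e.\ $\alpha=0$) and for $k=4$ only $|f|_{1,\O}$ ($\alpha=1$) appears.

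The main obstacle, modest as it is, is verifying that the modified operator $\widehat\Pi_E^1$ indeed yields the approximation property $\|v_h - \widehat\Pi_E^1 v_h\|_{0,E} \lesssim h_E^2 |v_h|_{2,E}$ on curved elements: one must check that the averaged vertex quantities $\widehat{v_h}$ and $\widehat{\nabla v_h}$ used in its definition differ from the true boundary averages $\Pi^0_{\partial E} v_h$, $\Pi^0_{\partial E}(\nabla v_h)$ by terms controlled by $h_E^2 |v_h|_{2,E}$ — which follows from trace and inverse estimates on edges together with the scaled trace inequality of Lemma~\ref{lemma:scaled:trace} — and then invoke the $H^2$ Poincaré--Friedrichs inequality. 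Since all constituent estimates (polynomial approximation, scaled trace, Poincaré--Friedrichs) are already available in the excerpt and uniform in $h_E$, and since these are by now standard VEM manipulations, I would state this as following from \cite{BM13} adapted via the curved-edge trace tools of Lemmas~\ref{lemma:trace:BRV}--\ref{lemma:scaled:trace}, and omit the routine details.
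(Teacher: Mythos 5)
The paper itself omits this proof, stating only that it ``follows from standard VEM arguments, see for instance [BM13]''; your plan reconstructs exactly those arguments (case splitting on $k$, $L^2$-projection orthogonality, Poincar\'e-type bounds, and the curved-edge verification for $\widehat\Pi_E^1$), so in spirit you are on the paper's intended route. There is, however, one concrete point where the sketch as written falls short of the claimed rate: the case $k=4$. There you split $(f,v_h)_{0,E}-(f_h,\widehat\Pi_E^1 v_h)_{0,E}=(f-f_h,v_h)_{0,E}+(f_h,v_h-\widehat\Pi_E^1 v_h)_{0,E}$ and estimate the second term by Cauchy--Schwarz with $\|f_h\|_{0,E}\lesssim\|f\|_{0,E}$ and $\|v_h-\widehat\Pi_E^1 v_h\|_{0,E}\lesssim h_E^2|v_h|_{2,E}$. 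That yields only $O(h^2)\,\|f\|_{0,\O}$, whereas the statement requires $h^{s-2}|f|_{1,\O}$ with $s$ up to $5$, i.e.\ up to $O(h^3)$. The missing observation is that for $k=4$ the definition $\widehat\Pi_E^1 v_h=\Pi_E^0 v_h+(\xb-\xb_E)\cdot\widehat{\nabla v_h}$, together with $\int_E(\xb-\xb_E)\,{\rm d}E=0$ ($\xb_E$ being the barycenter), forces $\int_E(v_h-\widehat\Pi_E^1 v_h)\,{\rm d}E=0$; hence one may replace $f_h=\Pi_E^1 f$ by $\Pi_E^1 f-\Pi_E^0 f$ in the second pairing, gaining the extra factor $\|\Pi_E^1 f-\Pi_E^0 f\|_{0,E}\lesssim h_E|f|_{1,E}$ and recovering $O(h^3)$. (A cosmetic remark: for $k=3$ the relevant projection is $\Pi_E^1$, not $\Pi_E^0$ as written, though there $O(h^2)$ already suffices since $s\le 4$.) With that one-line orthogonality added, your argument is complete and coincides with the standard proof the paper points to.
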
	

For all $v \in H^s(\Omega)$ with $s > 2$, we introduce the following quantity:
\begin{equation}\label{eq:non-conformity}
		T^h(v) := \Big(\: \sum_{E \in \O_h} a^h_E(v_{\pi} - v_{\rm I}, \,v_{\pi} - v_{\rm I}) \: \Big)^{1/2},	 
\end{equation}	
where $v_{\rm I} \in \Vh$ and $v_{\pi} \in \P_k(\O_h)$ are the VEM interpolant and polynomial projection of $v$ in the sense of Theorem~\ref{theorem:interpolation} and Lemma~\ref{Lemma:poli:approx}, respectively. 

\begin{remark}\label{remark:non-conformity}
The term  $T^h (\cdot)$ defined in~\eqref{eq:non-conformity} will appear in the abstract error estimate below and is related to the VEM approximation of the bilinear form of the problem.
A difficulty in bounding this term lies in the fact that we cannot directly use standard arguments (see for instance~\cite{BRV2019_M2AN}), since the norm appearing on the right-hand side of the continuity bound~\eqref{continuity:S_E} is stronger than the norm in which we derived approximation estimates; compare Theorem~\ref{theorem:interpolation} and Proposition~\ref{prop:stability:S_E}. Furthermore, since this term involves the sum space 
$\Vh + \P_k(\O_h)$, it is not easily handled by bridging the two norms with inverse inequalities. We provide an error bound for $T^h(u)$ in Lemma~\ref{lemma:non-coformity}. 
\end{remark}

The following result establishes an error estimate for the  consistency term defined in~\eqref{eq:non-conformity}. 
\begin{lemma}\label{lemma:non-coformity}
Let $u \in V$ be  the solution of  problem~\eqref{Bihar:weak} and assume that $u \in H^{s}(\O)$, 
with $5/2 \leq s \leq k +1$. Then, the following estimate holds 
\begin{equation*}
			T^h(u) \lesssim h^{s-2} \|u\|_{s,\O}.
\end{equation*}
\end{lemma}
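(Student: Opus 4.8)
The plan is to bound $T^h(u)$ by splitting the local discrete bilinear form $a^h_E(v_\pi - v_{\rm I}, v_\pi - v_{\rm I})$ into its two natural pieces: the polynomial consistency part $a_E(\PiK(v_\pi - v_{\rm I}), \PiK(v_\pi - v_{\rm I}))$, and the stabilizing part $\mathcal{S}_E((I-\PiK)(v_\pi - v_{\rm I}), (I-\PiK)(v_\pi - v_{\rm I}))$. Setting $w := v_\pi - v_{\rm I}$, the first part is immediately controlled by $|w|^2_{2,E}$ using the boundedness of $\PiK$ in the $H^2$-seminorm (recall $|\PiK w|_{2,E}\le |w|_{2,E}$), and then by the triangle inequality $|w|_{2,E}\le |v - v_\pi|_{2,E} + |v - v_{\rm I}|_{2,E}$, both of which are $\lesssim h_E^{s-2}\|v\|_{s,E}$ (locally) by Lemma~\ref{Lemma:poli:approx} and the element-wise estimates in the proof of Theorem~\ref{theorem:interpolation}. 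Summing over $E$ gives the desired $h^{s-2}\|u\|_{s,\O}$ contribution.

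The real work is the stabilizing term. First I would observe that $(I-\PiK)w = (I-\PiK)(v_\pi - v_{\rm I})$ and that $\PiK$ reproduces polynomials of degree $\le k$, so one can also write $(I-\PiK)w = (I-\PiK)(v - v_{\rm I}) - (I-\PiK)(v - v_\pi)$, which will let me replace $v_\pi$ by $v$ at the cost of approximation terms. The key is to apply the continuity bound of Proposition~\ref{prop:stability:S_E}, which requires the argument to have zero average of itself and its gradient on $\partial E$; this is exactly the property preserved by $(I-\PiK)$ (by the definition~\eqref{Ritz:operator} with the $\Pi^0_{\partial E}$ constraints). Hence
\[
\mathcal{S}_E((I-\PiK)w,(I-\PiK)w)\lesssim |(I-\PiK)w|^2_{2,E} + h_E\,|(I-\PiK)w|^2_{5/2,E}.
\]
The term $|(I-\PiK)w|_{2,E}$ is again handled by $H^2$-boundedness of $\PiK$ and the triangle inequality as above. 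The genuinely delicate term is $h_E\,|(I-\PiK)w|^2_{5/2,E}$: here $w = v_\pi - v_{\rm I}$ is \emph{not} a polynomial (because $v_{\rm I}\in\Vh$ is virtual), so a naive inverse inequality is unavailable, and moreover we only have $H^2$-type approximation estimates for $v-v_{\rm I}$, not $H^{5/2}$ ones — this is precisely the obstacle flagged in Remark~\ref{remark:non-conformity}.

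To overcome this I would split $(I-\PiK)w = \big((I-\PiK)(v-v_{\rm I})\big) - \big((I-\PiK)(v-v_\pi)\big)$ and treat the two pieces by different means. For the polynomial-difference piece $(I-\PiK)(v-v_\pi)$, both $v-v_\pi$ and its projection live close to $\P_k(E)$, so a polynomial inverse inequality on star-shaped polygons gives $h_E^{1/2}|(I-\PiK)(v-v_\pi)|_{5/2,E}\lesssim |(I-\PiK)(v-v_\pi)|_{2,E}\lesssim |v-v_\pi|_{2,E}\lesssim h_E^{s-2}|v|_{s,E}$ (note $\PiK(v-v_\pi)$ is a polynomial; $v-v_\pi$ is $H^s$ with $s\ge 5/2$, and $|v-v_\pi|_{5/2,E}\lesssim h_E^{s-5/2}|v|_{s,E}$ by Lemma~\ref{Lemma:poli:approx}, so $h_E^{1/2}|v-v_\pi|_{5/2,E}\lesssim h_E^{s-2}|v|_{s,E}$ directly). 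For the virtual piece $(I-\PiK)(v-v_{\rm I})$, I would instead \emph{not} try to bound its $H^{5/2}$-seminorm but rather re-examine the proof of Proposition~\ref{prop:stability:S_E}: inspecting~\eqref{split:I}–\eqref{eq:S^partial:3}, the quantity $\mathcal{S}_E$ applied to an argument with zero boundary averages is in fact bounded by edge quantities of the form $h_E^{-1}\|\nabla\varphi\|^2_{0,\partial E}$ plus $h_E|\nabla\varphi|^2_{1,\partial E}$ and lower-order analogues — all of which are edge norms of $\varphi$ and $\dn\varphi$ of order at most $3/2$. These are exactly the quantities controlled by the edge interpolation estimates of Lemma~\ref{Lemma:edge:interp}, applied to $v-v_{\rm I}$ (whose edge traces agree with those of $v-\tvI$), yielding bounds of order $h_E^{s-2}$ as in the proof of Theorem~\ref{theorem:interpolation}, together with the scaled trace Lemma~\ref{lemma:scaled:trace} to pass from edge to element norms. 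Concretely, the cleanest route is: bound $\mathcal{S}_E((I-\PiK)(v-v_{\rm I}),\cdot)$ by first using $\mathcal{S}_E(\varphi,\varphi)^{1/2}\lesssim\mathcal{S}_E(\PiK\varphi,\PiK\varphi)^{1/2}+\mathcal{S}_E((I-\PiK)\varphi,\ldots)^{1/2}$ is circular, so instead estimate $\mathcal{S}_E(\varphi,\varphi)$ for $\varphi = (I-\PiK)(v-v_{\rm I})$ directly via the edge formula underlying~\eqref{continuity:S_E} but keeping the edge norms un-lifted, then apply Lemma~\ref{Lemma:edge:interp} edge-by-edge and Lemma~\ref{lemma:scaled:trace}/the $H^s$-trace bounds at the end, mirroring~\eqref{sum:I_e}–\eqref{sum:II_e}; the Stein extension Lemma~\ref{Stein:extension} converts the boundary curve norms $\|v\|_{s-1/2,\Gamma_i}$, $\|\nabla v\|_{s-3/2,\Gamma_i}$ into $\|u\|_{s,\O}$ exactly as at the end of the proof of Theorem~\ref{theorem:interpolation}. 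Summing all element contributions and taking the square root yields $T^h(u)\lesssim h^{s-2}\|u\|_{s,\O}$. I expect the main obstacle to be organizing this last argument so that the $H^{5/2}$-seminorm appearing through Proposition~\ref{prop:stability:S_E} never has to be evaluated on the virtual part — i.e., recognizing that for the virtual component one must bypass~\eqref{continuity:S_E} and argue with the edge quantities directly (plus the edge interpolation lemma), while reserving the inverse-inequality / $H^{5/2}$ route for the purely polynomial component.
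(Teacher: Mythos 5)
Your proposal is correct and follows essentially the same path as the paper's proof: split $a^h_E$ into the consistency part (triangle inequality plus Lemma~\ref{Lemma:poli:approx} and Theorem~\ref{theorem:interpolation}) and the stabilizing part, reserve the $H^{5/2}$-continuity bound of Proposition~\ref{prop:stability:S_E} exclusively for the polynomial approximation error $u-u_{\pi}$, and attack the virtual interpolation error through the DoF/edge structure of $\calS_E$ using the zero boundary averages of $(I-\PiK)$, the Poincar\'e--Friedrichs inequality, the edge interpolation Lemma~\ref{Lemma:edge:interp}, the scaled trace inequality and polynomial inverse estimates. The only differences are organizational (you perform the splitting $(I-\PiK)(u_{\pi}-u_{\rm I})=(I-\PiK)(u-u_{\rm I})-(I-\PiK)(u-u_{\pi})$ once up front, whereas the paper inserts $\pm u$ and $\pm u_{\pi}$ term by term inside each piece of the stabilization), together with one harmless imprecision: the edge part of $\calS_E$ involves order-$2$ edge seminorms of the trace and order-$1$ seminorms of the normal derivative, not quantities ``of order at most $3/2$'', but these are still within the range covered by Lemma~\ref{Lemma:edge:interp} when $s\ge 5/2$.
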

\begin{proof}
By using the definition of $a^h_E(\cdot,\cdot)$ and $\calS_E(\cdot,\cdot)$ we obtain
\begin{equation*}
	\begin{split}
		a^h_E(u_{\pi} - u_{\rm I}, \,u_{\pi} - u_{\rm I}) & \lesssim |u_{\pi} - u_{\rm I}|_{2,\E}^2 
		+ \calS_E((I-\PiK)(u_{\pi} - u_{\rm I}), (I-\PiK)(u_{\pi} - u_{\rm I}))\\
		&= |u_{\pi} - u_{\rm I}|_{2,\E}^2 
		+ \calS^{\boldsymbol{\circ}}_E((I-\PiK) u_{\rm I}, (I-\PiK) u_{\rm I})
		+\calS^{\boldsymbol{\partial}}_E((I-\PiK) u_{\rm I}, (I-\PiK) u_{\rm I}).
	\end{split}
\end{equation*}

For simplicity, we denote $\overline{u}_{{\rm I}}:=(I-\PiK) u_{\rm I}$, then from
\eqref{eq:S^o}, \eqref{eq:S^partial} and \eqref{eq:S^partial:1}, we have 
\begin{equation*}
	\begin{split}
		\calS^{\boldsymbol{\circ}}_E((I-\PiK) u_{\rm I}, (I-\PiK) u_{\rm I})
		&+\calS^{\boldsymbol{\partial}}_E((I-\PiK) u_{\rm I},(I-\PiK) u_{\rm I})  \\
		&\leq h_{\E}^{-4}\|\overline{u}_{{\rm I}}\|^2_{0,\E} + h_{\E}^{-2}|\overline{u}_{{\rm I}}|^2_{1,\E} 
		+  |\overline{u}_{{\rm I}}|^2_{2,\E}+\calS^{\boldsymbol{\partial, \; \!I\!I}}_E(\overline{u}_{{\rm I}}, \overline{u}_{{\rm I}})
		+ \calS^{\boldsymbol{\partial},e}_E(\overline{u}_{{\rm I}}, \overline{u}_{{\rm I}}).
	\end{split}
\end{equation*}

By definition $\Pi_{\dE}^{0}\overline{u}_{{\rm I}} =\Pi_{\dE}^{0}(\nabla\overline{u}_{{\rm I}}) =0$,  thus  by combining the two above estimates and the Poincar\'e--Friedrichs inequality (cf.~\eqref{Poincare:ineq}), it follows that
\begin{equation}\label{main:eq}
	\begin{split}
		a^h_E(u_{\pi} - u_{\rm I}, \,u_{\pi} - u_{\rm I}) &\lesssim |u_{\pi} - u_{\rm I}|^2_{2,\E} + |\overline{u}_{{\rm I}}|^2_{2,\E}+ \calS^{\boldsymbol{\partial, \; \!I\!I}}_E(\overline{u}_{{\rm I}}, \overline{u}_{{\rm I}})
		+ \calS^{\boldsymbol{\partial},e}_E(\overline{u}_{{\rm I}}, \overline{u}_{{\rm I}})\\
		&=: T^E_1 + T^E_2 + T^E_3+ T^E_4.
	\end{split}	
\end{equation}
By using the triangle inequality, for the first term, we get
\begin{equation}\label{eq:T1}
	T_1^E \lesssim |u_{\pi} - u|^2_{2,E} + |u - u_{\rm I}|^2_{2,E}.
\end{equation}
Concerning the term $T_2^E$, by the continuity of the operator $\PiK$ we obtain
\begin{equation}\label{eq:T2}
	\begin{split}
		T_2^E &=|(I-\PiK) u_{\rm I}|^2_{2,E} \lesssim |(I -\PiK)( u - u_{\rm I})|_{2, E}^2 + |(I -\PiK ) u|_{2, E}^2 \\
&		\lesssim  
		|u - u_{\rm I}|^2_{2,E}+|u-u_{\pi} |^2_{2,E}.
	\end{split}
\end{equation}

Now, we will analyze the term $T^E_3$. By definition we have 
\begin{equation*}
	\begin{split}
		T^E_3 &=\sum_{i_2=1}^{2N_E} h^{-2}_{i_2}\boldsymbol{D}^{\boldsymbol{\partial,  \; \!I\!I}}_{i_2}(\overline{u}_{{\rm I}})^2
		=\sum_{i_2=1}^{2N_E} h^{-2}_{i_2}\boldsymbol{D}^{\boldsymbol{\partial,  \; \!I\!I}}_{i_2}(u-\PiK u_{{\rm I}})^2
		\lesssim   \|\nabla (u-\PiK u_{{\rm I}})\|^2_{L^{\infty}(\E)}\\ 
&		\lesssim \|\nabla(u-\PiK u)\|^2_{L^{\infty}(\E)}+ \|\nabla\PiK (u- u_{{\rm I}})\|_{L^{\infty}(\E)}=: T^E_{3,1}+T^E_{3,2}.
	\end{split}
\end{equation*}

By adding and subtracting $u_{\pi}$, then using the estimate~\eqref{Sobo-H3-2} (applied to $\nabla(u-u_{\pi})$), together with standard inverse inequality for polynomials on star-shaped polygons, we deduce
\begin{equation*}
	\begin{split}
		T^E_{3,1} &\leq  \|\nabla(u-u_{\pi})\|^2_{L^{\infty}(\E)} 
		+ \|\nabla\PiK(u_{\pi}- u)\|^2_{L^{\infty}(\E)}\\
		& \lesssim h_{\E}^{-2} |u-u_{\pi}|^2_{1,\E}+ h_{\E} |u-u_{\pi}|^2_{5/2,\E} 
		+ h_{\E}^{-2}\|\nabla\PiK(u_{\pi}- u)\|^2_{0,\E}\\
		& \lesssim|u-u_{\pi}|^2_{2,\E}+ h_{\E} |u-u_{\pi}|^2_{5/2,\E} +|\PiK(u_{\pi}- u)|^2_{2,\E}\\
		& \lesssim|u-u_{\pi}|^2_{2,\E}+ h_{\E} |u-u_{\pi}|^2_{5/2,\E},
	\end{split}
\end{equation*}
where we also used Lemma~\ref{Lemma:poli:approx}, and the continuity  of projection $\PiK$ respect to the seminorm $|\cdot|_{2,\E}$ in the second and last step, respectively.

Next, we will bound term $T^E_{3,2}$.  By using the inverse and Poincar\'e--Friedrichs inequality~\eqref{Poincare:ineq:H1}, we obtain
\begin{equation*}
	\begin{split}
		T^E_{3,2} &= \|\nabla\PiK (u-u_{{\rm I}})\|^2_{L^{\infty}(\E)}  \lesssim h^{-2}_{E}\|\nabla\PiK (u-u_{{\rm I}})\|^2_{0,\E}\\
		& \lesssim h^{-2}_{E}\|\nabla\PiK (u-u_{{\rm I}})-\nabla(u-u_{{\rm I}})\|^2_{0,\E}+h^{-2}_{E}\|\nabla(u-u_{{\rm I}})\|^2_{0,\E}\\
		& \lesssim |\nabla\PiK (u-u_{{\rm I}})-\nabla(u-u_{{\rm I}})|^2_{1,\E} + h^{-2}_{E}\|\nabla(u-u_{{\rm I}})\|^2_{0,\E}\\
		& \lesssim  | u-u_{{\rm I}}|^2_{2,\E} + h^{-2}_{E}\|\nabla(u-u_{{\rm I}})\|^2_{0,\E},
	\end{split}
\end{equation*}
where we have used also the continuity of projection $\PiK$ respect to the seminorm $|\cdot|_{2,\E}$. 

Now, by using again the Poincar\'e--Friedrichs inequality \eqref{Poincare:ineq:H1}, splitting the gradient in tangent and normal components (and recalling that the integral on each edge of the tangent derivative of $u-u_{{\rm I}}$ vanishes since such function is zero at all vertexes) we have
\begin{equation*}
	\begin{split}
		h^{-1}_{E}\|\nabla(u-u_{{\rm I}})\|_{0,\E}	
		&\lesssim  |\nabla(u-u_{{\rm I}})|_{1,\E} + h^{-1}_{\E}\left| \int_{\partial \E} \nabla(u-u_{{\rm I}}) \, {\rm d}s\right| \\
		&\lesssim  |\nabla(u-u_{{\rm I}})|_{1,\E} + h^{-1}_{\E}\left| \int_{\partial \E} \dn(u-u_{{\rm I}}) \, {\rm d}s\right| \\
		&\lesssim  |u-u_{{\rm I}}|_{2,\E} + h^{-1/2}_{\E} \| \dn(u-u_{{\rm I}}) \|_{0,\partial \E} .
	\end{split}
\end{equation*} 

By employing Lemma~\ref{Lemma:edge:interp},  we have that 
\begin{equation*}
	h^{-1/2}_{\E} \| \dn(u-u_{{\rm I}}) \|_{0,e}
	\lesssim h^{s-2}_{\E}\| \dn u\|_{s-3/2,e},
\end{equation*}
and thus we easily derive, by the same arguments used in Theorem \ref{theorem:interpolation}, 
\begin{equation*}
	T^E_{3,2}
	\lesssim | u-u_{{\rm I}}|^2_{2,\E}+ h_{\E}^{2(s-2)} \| \nabla u \|^2_{s-3/2,\partial \E \cap \Gamma} .		
\end{equation*}  
By collecting the  bounds involving $T^E_{3,1}$ and $T^E_{3,2}$, we conclude
\begin{equation}\label{eq:T3}
	T^E_3\lesssim  h_{\E} |u-u_{\pi}|^2_{5/2,\E} +|u-u_{\pi}|^2_{2,\E} + | u-u_{{\rm I}}|^2_{2,\E}+ h_{\E}^{2(s-2)} \| \nabla u \|^2_{s-3/2,\partial \E \cap \Gamma} .
\end{equation}

By adding and subtracting suitable terms, we have that 
	\begin{equation}\label{split:T_4}
		\begin{split}
			T^E_4 \lesssim \sum_{e \in \partial \E} h_{E} |\nabla\overline{u}_{{\rm I}}|^2_{1,e} 
			&\lesssim h_{E} \sum_{e \in \partial \E}  \Big( |\nabla(u_{{\rm I}}- u_{\pi})|^2_{1,e} + |\nabla(u_{\pi}- \PiK u)|^2_{1,e} + |\nabla\PiK( u- u_{{\rm I}})|^2_{1,e} \Big) \\  
			&=:h_{E} \sum_{e \in \partial \E}  (T^e_{4,1} +T^e_{4,2} +T^e_{4,3}). 		
		\end{split}
\end{equation}

For the term $T^e_{4,1}$, we proceed applying the second trace inequality in Lemma~\ref{lemma:scaled:trace}, as follow 
	\begin{equation}\label{T_1^e}
		\begin{split}
			T^e_{4,1} &\lesssim |\nabla(u_{{\rm I}}- u)|^2_{1,e} + |\nabla(u- u_{\pi})|^2_{1,e} \\
			&  \lesssim |u_{{\rm I}}- u|^2_{2,e} + h^{-1}_{\E} |\nabla(u- u_{\pi})|^2_{1,\E} + |\nabla(u- u_{\pi})|^2_{3/2,\E} \\      
			&\lesssim   h^{2(s-5/2)}\|u\|^2_{s-1/2,e} +h^{-1}_{\E} |u- u_{\pi}|^2_{2,\E}+  |u- u_{\pi}|^2_{5/2,\E},
		\end{split}
	\end{equation}
	where we have applied Lemmas~\ref{Lemma:edge:interp} in the first term.

Next, for the term $T^e_{4,2}$, we apply again the scaled trace bound  in Lemma~\ref{lemma:scaled:trace}, to obtain 
	\begin{equation*}
		\begin{split}
			T^e_{4,2} &=|\nabla (u_{\pi}-\PiK u)|^2_{1,e}  \lesssim h^{-1}_{\E}|\nabla (u_{\pi}-\PiK u)|^2_{1,\E}+ |\nabla (u_{\pi}-\PiK u)|^2_{3/2,\E}\\
			& \lesssim h^{-1}_{\E}|\PiK(u_{\pi}- u)|^2_{2,\E}+ |\PiK(u_{\pi}- u)|^2_{5/2,\E}.
		\end{split}
	\end{equation*}
	From the above estimate, we observe that if $k=2$, then  $|\PiK(u_{\pi}- u)|^2_{5/2,\E}=0$. However, if $k \geq 3$, then we can apply a classical inverse inequality for polynomials on star-shaped polygons, to obtain $ |\PiK(u_{\pi}- u)|^2_{5/2,\E} \lesssim h^{-1}_{\E} |\PiK(u_{\pi}- u)|^2_{2,\E}$. Therefore, by using the continuity of the projection $\PiK$  respect with $|\cdot|_{2,\E}$, for both cases, we get 
	\begin{equation}\label{T_2^e}
		T^e_{4,2}	 \lesssim h^{-1}_{\E}|u_{\pi}- u|^2_{2,\E}.
\end{equation}

Following similar arguments we can derive
	\begin{equation}\label{T_3^e}
		T^e_{4,3}	 \lesssim h^{-1}_{\E}|u-u_{{\rm I}}|^2_{2,\E}.
	\end{equation}

By inserting \eqref{T_1^e}-\eqref{T_3^e} into~\eqref{split:T_4}, we obtain
	\begin{equation}\label{eq:T4}
		\begin{split}
			T^{\E}_4 &\lesssim  |u-u_{\pi}|^2_{2,\E} + | u-u_{{\rm I}}|^2_{2,\E} +  h_{\E} |u- u_{\pi}|^2_{5/2,\E} +   h_{\E}^{2(s-2)} \| u \|^2_{s-1/2,\partial \E \cap \Gamma} .
		\end{split}
	\end{equation}

By combining estimates~\eqref{eq:T1},~\eqref{eq:T2},~\eqref{eq:T3},~\eqref{eq:T4}, and~\eqref{main:eq}, it follows that
\begin{equation*}
	\begin{split}
		a^h_E(u_{\pi} - u_{\rm I}, \,u_{\pi} - u_{\rm I}) &\lesssim h_{\E} 	|u- u_{\pi}|^2_{5/2,\E} 
		+|u-u_{\pi}|^2_{2,\E}   
+ | u-u_{{\rm I}}|^2_{2,\E}		+ h_{\E}^{2(s-2)} \| u \|^2_{s-1/2,\partial \E \cap \Gamma}.
	\end{split}	
\end{equation*}

Finally, the desired result follows easily from definition \eqref{eq:non-conformity} and
the above estimate, by applying similar arguments to those used
in the proof of Theorem~\ref{theorem:interpolation}, the Stein extension operator and the approximation properties in Lemma~\ref{Lemma:poli:approx}, and Theorem~\ref{theorem:interpolation}.

\end{proof}

In the following result we state an abstract error analysis in the energy norm for
scheme~\eqref{discrete:problem}, which can be seen as a Strang-type lemma and allow us to obtain the respective optimal error estimates. 


\begin{theorem}\label{theorem:convergence}
		Under assumptions $\mathbf{(A0)}$-$\mathbf{(A2)}$, let $u\in V$ and $u_h \in \Vh$ 
		be the solutions of  problems~\eqref{Bihar:weak} and \eqref{discrete:problem}, respectively. 
		Then, for all $u_{\rm I} \in \Vh$ and  each $u_{\pi} \in \P_k(\O_h)$, we have 
		\begin{equation}\label{bound:Strang-type}
			|u - u_h|_{2,\O} \lesssim |u - u_{\rm I}|_{2,\O} + |u-u_{\pi}|_{2,h}+ \|f-f_h\| + T^h(u).
		\end{equation}
		
		Furthermore, if $u \in H^{s}(\O)$ and $f \in H^{\alpha}(\O)$, with $5/2 \leq s \leq k +1$ and $\alpha$ as in \eqref{def:alpha}. Then, we have the following error estimate 
			\begin{equation*}
				|u - u_h|_{2,\O} \lesssim h^{s-2} (\|u\|_{s,\O} + |f|_{\alpha,\O}),
			\end{equation*}
		where the hidden constants depends on the degree $k$, the parametrization $\gamma$ and 
			the shape regularity constant $\rho$, but is  independent of $h$.
\end{theorem}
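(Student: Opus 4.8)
The plan is to prove Theorem~\ref{theorem:convergence} in two stages: first the abstract Strang-type bound~\eqref{bound:Strang-type}, then the concrete convergence rate by inserting the interpolation, polynomial approximation, load and consistency estimates already proved above.

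For the abstract estimate, I would start from the triangle inequality $|u-u_h|_{2,\O} \le |u-u_{\rm I}|_{2,\O} + |u_{\rm I}-u_h|_{2,\O}$ and concentrate on the discrete error $\delta_h := u_h - u_{\rm I} \in \Vh$. Using the coercivity of $a^h(\cdot,\cdot)$ on $\Vh$ (Proposition~\ref{prop:stability:coerc}), there is $\alpha_*>0$ with $\alpha_*\,|\delta_h|_{2,\O}^2 \lesssim a^h(\delta_h,\delta_h) = a^h(u_h,\delta_h) - a^h(u_{\rm I},\delta_h)$. By the discrete problem~\eqref{discrete:problem}, $a^h(u_h,\delta_h) = \langle f_h,\delta_h\rangle$. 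Then I would split $a^h(u_{\rm I},\delta_h)$ elementwise and insert $\pm u_\pi$ on each element: $a^h_E(u_{\rm I},\delta_h) = a^h_E(u_{\rm I}-u_\pi,\delta_h) + a^h_E(u_\pi,\delta_h)$. For the polynomial piece, the polynomial consistency of $a^h_E$ (the projector $\PiK$ reproduces polynomials of degree $k$ in the sense of~\eqref{Ritz:operator}, so that $a^h_E(u_\pi,w_h) = a_E(u_\pi,w_h)$ for $w_h\in\VE$) gives $\sum_E a^h_E(u_\pi,\delta_h) = \sum_E a_E(u_\pi,\delta_h) = \sum_E a_E(u_\pi - u,\delta_h) + a(u,\delta_h)$, and the continuous problem~\eqref{Bihar:weak} yields $a(u,\delta_h) = (f,\delta_h)_{0,\O}$. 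Collecting terms, $\alpha_*|\delta_h|_{2,\O}^2 \lesssim \big(\langle f_h,\delta_h\rangle - (f,\delta_h)_{0,\O}\big) + \sum_E a_E(u-u_\pi,\delta_h) - \sum_E a^h_E(u_{\rm I}-u_\pi,\delta_h)$. The first bracket is bounded by $\|f-f_h\|\,|\delta_h|_{2,\O}$ by Proposition~\ref{func-bound}; the second by $|u-u_\pi|_{2,h}\,|\delta_h|_{2,\O}$ via continuity of $a_E$; the third requires a Cauchy--Schwarz in $a^h_E$ together with the stability $a^h_E(v_h,v_h)\lesssim\ldots$ — here I would bound it by $\big(\sum_E a^h_E(u_\pi-u_{\rm I},u_\pi-u_{\rm I})\big)^{1/2}\big(\sum_E a^h_E(\delta_h,\delta_h)\big)^{1/2} = T^h(u)\cdot\big(\sum_E a^h_E(\delta_h,\delta_h)\big)^{1/2}$, and then control $\sum_E a^h_E(\delta_h,\delta_h)\lesssim |\delta_h|_{2,\O}^2$ using the (upper) continuity of $a^h_E$ on $\VE$ (which follows from continuity of $\PiK$ in $|\cdot|_{2,E}$ and boundedness of $\mathcal{S}_E$ on $(I-\PiK)\VE$, e.g.\ via Lemma~\ref{lemma:coercivity:S_E}'s companion upper bound — note $\mathcal{S}_E((I-\PiK)v_h,(I-\PiK)v_h)\lesssim |v_h|_{2,E}^2$ since $(I-\PiK)v_h$ has zero boundary averages of function and gradient, so Proposition~\ref{prop:stability:S_E} applies after an inverse inequality absorbs the $h_E|\cdot|_{5/2}$ term; the subtle point is that $(I-\PiK)v_h$ is virtual, not polynomial, so the inverse inequality is not automatic — one uses instead that $h_E^{1/2}|\PiK(\cdot)|_{5/2}$ is controlled while the virtual part is handled through Lemma~\ref{lemma:coercivity:S_E} bounding $\mathcal{S}_E$ from above on $\VE$ by dofi-dofi equivalence). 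Dividing through by $|\delta_h|_{2,\O}$ gives~\eqref{bound:Strang-type}.

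For the rate, I would simply choose $u_{\rm I}$ to be the interpolant of Theorem~\ref{theorem:interpolation} and $u_\pi$ the piecewise polynomial of Lemma~\ref{Lemma:poli:approx}, so that $|u-u_{\rm I}|_{2,\O}\lesssim h^{s-2}\|u\|_{s,\O}$ and $|u-u_\pi|_{2,h}\lesssim h^{s-2}\|u\|_{s,\O}$; then $\|f-f_h\|\lesssim h^{s-2}|f|_{\alpha,\O}$ from Proposition~\ref{func-bound}, and $T^h(u)\lesssim h^{s-2}\|u\|_{s,\O}$ from Lemma~\ref{lemma:non-coformity} (which needs $s\ge 5/2$, hence the hypothesis). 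Summing the four contributions in~\eqref{bound:Strang-type} yields $|u-u_h|_{2,\O}\lesssim h^{s-2}(\|u\|_{s,\O}+|f|_{\alpha,\O})$, with constants depending only on $k$, $\gamma$ and $\rho$.

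I expect the main obstacle to be the estimate $\sum_E a^h_E(\delta_h,\delta_h)\lesssim |\delta_h|_{2,\O}^2$, i.e.\ the \emph{upper} (continuity) bound on the discrete bilinear form. Unlike the coercivity bound, which the paper invests heavily in proving (Proposition~\ref{prop:stability:coerc}, Lemma~\ref{lemma:coercivity:S_E}), the matching continuity requires bounding $\mathcal{S}_E((I-\PiK)v_h,(I-\PiK)v_h)$ by $|v_h|_{2,E}^2$ for $v_h\in\VE$; Proposition~\ref{prop:stability:S_E} gives $\mathcal{S}_E(w,w)\lesssim |w|_{2,E}^2 + h_E|w|_{5/2,E}^2$ but with the stronger $H^{5/2}$ seminorm present. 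For the polynomial part $\PiK v_h$ this is fine after a polynomial inverse inequality, but for $(I-\PiK)v_h$ one cannot invert freely. The resolution — which I would spell out — is that $\mathcal{S}_E$ restricted to $\VE$ is equivalent (by edge inverse estimates, as argued in Remark~\ref{Remark:new:stab}) to the dofi-dofi stabilization, which is classically bounded by $|v_h|_{2,E}^2$ on $\VE$; equivalently, one writes $(I-\PiK)v_h = v_h - \PiK v_h$, applies the triangle inequality in $\mathcal{S}_E^{1/2}$, bounds the $v_h$ term by $|v_h|_{2,E}^2$ via Lemma~\ref{lemma:coercivity:S_E}'s reverse (the dofi-dofi equivalence on $\VE$ only) and the $\PiK v_h$ term by Proposition~\ref{prop:stability:S_E} plus a polynomial inverse inequality. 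Handling this carefully, while staying inside $\VE$ (not the troublesome $\VE+\P_1(E)$), is the delicate bookkeeping of the argument; everything else is a standard Strang-lemma assembly.
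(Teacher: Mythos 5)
Your overall skeleton --- the error equation obtained by inserting $\pm u_\pi$, the polynomial consistency of $\PiK$, the use of the continuous and discrete problems, and the final choice of $u_{\rm I}$, $u_\pi$ together with Proposition~\ref{func-bound} and Lemma~\ref{lemma:non-coformity} --- coincides with the paper's, and your second part (the rate) is handled identically. The divergence, and the genuine gap, is in how you treat $\sum_E a^h_E(u_\pi-u_{\rm I},\delta_h)$. After Cauchy--Schwarz you need the continuity bound $\sum_E a^h_E(\delta_h,\delta_h)\lesssim|\delta_h|^2_{2,\O}$, which amounts to $\mathcal{S}_E((I-\PiK)\delta_h,(I-\PiK)\delta_h)\lesssim|\delta_h|^2_{2,\E}$ for $\delta_h\in\VE$. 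This is nowhere proved in the paper, and your sketched justification does not close it: Proposition~\ref{prop:stability:S_E} produces the term $h_{\E}|w|^2_{5/2,\E}$, which for the \emph{virtual} part $w=(I-\PiK)\delta_h$ cannot be absorbed by an inverse inequality (indeed $|w|_{5/2,\E}$ need not even be finite for a generic $\VE$ function); and the ``classical'' bound of the dofi-dofi form by $|v_h|^2_{2,\E}$ on $\VE$ is exactly the kind of inverse/trace estimate for virtual functions that the paper never establishes in the curved $C^1$ setting --- Remark~\ref{Remark:new:stab} only asserts equivalence of $\mathcal{S}_E$ with dofi-dofi on $\VE$, not that either is dominated by the $H^2$ seminorm. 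Bounding, for instance, the vertex values of $\nabla v_h$ by $|v_h|_{2,\E}$ requires going through the polynomial edge traces (since $H^2(\E)\not\hookrightarrow W^{1,\infty}(\E)$ in two dimensions), which is a nontrivial extra lemma you would have to supply.

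The paper avoids this entirely with a small but decisive reordering: it keeps $a^h(\delta_h,\delta_h)$ on the left-hand side, applies Cauchy--Schwarz and then Young's inequality to obtain $\sum_E a^h_E(u_\pi-u_{\rm I},\delta_h)\le\tfrac12\sum_E a^h_E(u_\pi-u_{\rm I},u_\pi-u_{\rm I})+\tfrac12\,a^h(\delta_h,\delta_h)$, absorbs the second summand into the left-hand side, and only then invokes the coercivity of Proposition~\ref{prop:stability:coerc} to pass to $|\delta_h|^2_{2,\O}$. This is precisely why $T^h(u)$ is defined through the discrete energy $a^h_E$ rather than the $H^2$ seminorm: no upper bound on $a^h$ is ever needed. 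If you restructure your argument this way, the rest of your proof goes through verbatim.
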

\begin{proof}
Let  $u_{\rm I} \in \Vh$ and $u_{\pi} \in \P_k(\O_h)$, then we set $\delta_h := (u_h - u_{\rm I}) \in \Vh$. By following standard steps in VEM analysis \cite{BBCMMR2013}, we have
	\begin{equation*}
		\begin{split}
			a^h(\delta_h, \, \delta_h) 
			&=a^h(u_h - u_{\rm I}, \, \delta_h)  =    (f_h - f, \, \delta_h) + 
			\sum_{E \in \Omega_h} \left( a^h_E(u_{\pi} - u_{\rm I}, \, \delta_h) + a_E(u - u_{\pi}, \, \delta_h) \right) \\
			& \leq C\|f-f_h\| |\delta_h|_{2,\O} + \sum_{E \in \CT_h}  a^h_E(u_{\pi} - u_{\rm I}, \, \delta_h) 
			+ |u - u_{\pi}|_{2,h}|\delta_h|_{2,\O} . 
		\end{split}
	\end{equation*}
	
	Now, since $a_E^h(\cdot,\cdot)$ is symmetric and applying the Cauchy-Schwarz and Young inequalities we get 
	\begin{equation*}
		\begin{split}
			\sum_{E \in \CT_h}  a^h_E(u_{\pi} - u_{\rm I}, \, \delta_h)  
			&\leq  \sum_{E \in \CT_h}  a^h_E(u_{\pi} - u_{\rm I}, \,u_{\pi} - u_{\rm I})^{1/2}a^h_E(\delta_h, \delta_h)^{1/2} \\
			& \leq \sum_{E \in \CT_h} \frac{1}{2}  a^h_E(u_{\pi} - u_{\rm I}, \,u_{\pi} - u_{\rm I}) + \frac{1}{2} a^h(\delta_h, \delta_h).
		\end{split}
	\end{equation*}
	
	Thus, from the above bounds, the Young inequality and Proposition~\ref{prop:stability:coerc} we easily obtain
	\begin{equation*}
	|\delta_h|^2_{2,\O} \lesssim \|f-f_h\|^2  + |u - u_{\pi}|^2_{2,h} + [T^h(u)]^2.
	\end{equation*}
	
	Therefore the desired result follows from the above bound and the triangular inequality.

	The proof of the second part follows by bounding the terms on the right-hand side of~\eqref{bound:Strang-type}, by means of Theorem~\ref{theorem:interpolation}, Lemma~\ref{Lemma:poli:approx}, Proposition~\ref{func-bound} and  Lemma~\ref{lemma:non-coformity}.
	
\end{proof}

\setcounter{equation}{0}	
\section{Numerical experiments}\label{SECTION:NUMERICAL:RESULT}
In this section we report numerical examples to test the performance of the new $C^1$-VE scheme with curved edges. In particular, we study the convergence of the proposed method from the practical perspective, comparing with Theorem~\ref{theorem:convergence}. 
Moreover, we check numerically that an approximation of the domain by using  ``straight VE'' polygons, leads to a sub-optimal rate of convergence for $k \geq 3$, as expected. 

In order to compute the experimental errors between the exact solution $u$ and the VEM solution $u_h$, we consider the following computable error quantities:
\begin{equation}\label{computable:error}
	\begin{split}
	{\rm Err}_{i}(u) &:=	\frac{\left( \sum_{E \in \Omega_h} | u -   \PiK u_h |_{i,\E}^2 \right)^{1/2}}{|u|_{i,\O}}, \quad  \text{for} \quad i =1,2, \\
	{\rm Err}_{0} (u)&:= 
	\frac{\left( \sum_{E \in \Omega_h}  \|  u -  \PiK u_h  \|_{0,E}^2 \right)^{1/2}}{\|u\|_{0,\O}} \, ,
\end{split}
\end{equation}
which converge with the same rate as the exact errors $|u - u_h|_{i,\O}$ (with $i=1,2$) and  $ \|u -  u_h\|_{0,\O}$, respectively.

We consider  the curved domain $\Omega$ described by
\begin{equation}\label{eq:test1domain}
	\Omega := \{ (x, \, y) \in \R^2 :  \quad 0 < x< 1, \quad \text{and} \quad g_{{\it bt}}(x) < y < g_{{\it tp}}(x) \} \,,
\end{equation}
where $g_{{\it bt}}$ and $g_{{\it tp}}$ are given by
\[
g_{{\it bt}}(x) := \frac{1}{20} \sin(\pi x) 
\qquad \text{and} \qquad
g_{{\it tp}}(x) := 1 + \frac{1}{20} \sin(3\pi x), \,
\]
see Figure~\ref{fig:test1domain}.  We assume that the curved boundaries $\Gamma_{{\it bt}}$ and $\Gamma_{{\it tp}}$ are parameterized with the standard graph parametrization, which are given by (see~\cite{BRV2019_M2AN}),
\[
\begin{aligned}
	\gamma_{{\it bt}} &\colon [0, \, 1] \to \Gamma_{{\it bt}} 
	&\qquad \gamma_{{\it bt}}(t)& = \left(t, \,  \frac{1}{20} \sin(\pi t)\right) ,
	\\
	\gamma_{{\it tp}} &\colon [0, \, 1] \to \Gamma_{{\it tp}} 
	&\qquad \gamma_{{\it tp}}(t)& = \left(t, \,  1 + \frac{1}{20} \sin(3 \pi t)\right).
\end{aligned}
\]
\begin{figure}[h!]
	\begin{center}
		{\includegraphics[height=7.0cm, width=8.5cm]{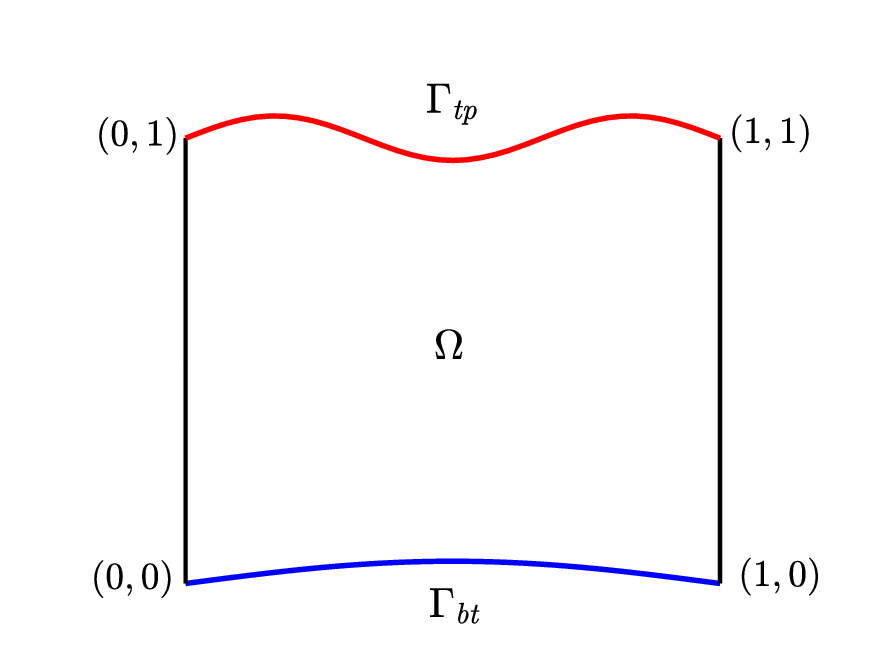}}
	\end{center}
	\caption{The domain $\Omega$ described in~\eqref{eq:test1domain}.}
	\label{fig:test1domain} 
\end{figure}

\begin{figure}[h!]
	\begin{center}
		{\includegraphics[height=6.5cm, width=8.0cm]{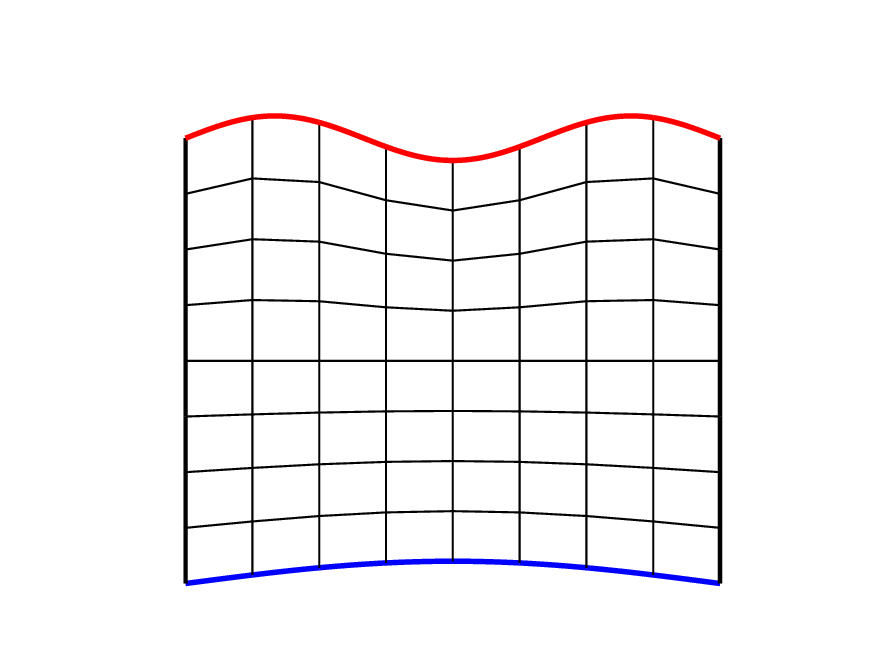}} \hspace*{-16.0mm}
		{\includegraphics[height=6.5cm, width=8.0cm]{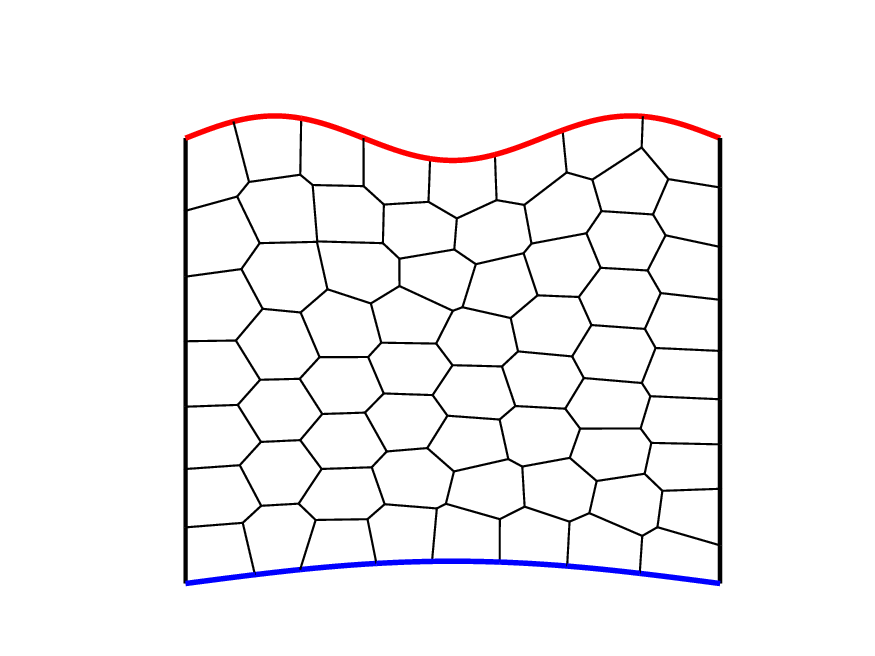}}
	\end{center}
	\caption{Example of the adopted curved polygonal meshes: quadrilateral mesh
		and  Voronoi mesh (on the left and right), respectively.}
	\label{fig:test1:domain:mesh} 
\end{figure}

In this experiment we test the Biharmonic problem~\eqref{Bihar:weak} on the curved domain $\Omega$.  We choose the load term $f$ and the homogeneous Dirichlet boundary conditions in such a way that the analytical solution is  given by 
\begin{equation*}
	u(x, \, y) = -(y - g_{{\it bt}}(x))^2(y - g_{{\it tp}}(x))^2 x^2(1-x)^2 (3 + \sin(5x) \sin(7y)).
\end{equation*}

The main objective of this test is to check the performance of the proposed VEM for curved domains, in comparison with the standard VEM in straight domains. The polygonal partition on the curved domain $\O$ is constructed starting from a mesh for the unitary square $\Omega_{Q} =(0,1)^2$ and mapping the nodes accordingly to the following rule:
\begin{equation*}
		(x_{\Omega},y_\Omega)=
		\begin{cases}
			(x_s,y_s+g_{{\it bt}}(x_s)(1-2y_s)),   &\text{if}~ y_s \leq \frac12 ,\\
			(x_s, 1-y_s+g_{{\it tp}}(x_s)(2y_s-1), & \text{if}~ y_s > \frac12,
		\end{cases}
\end{equation*}
where $(x_Q, \, y_Q)$ and $(x_{\Omega}, \, y_{\Omega})$ denote the mesh nodes on the square domain $\Omega_Q$ and on the curved domain $\Omega$,  respectively. The edges on the curved boundary consist of an arc of the curves $\Gamma_{{\it bt}}$ and $\Gamma_{{\it tp}}$  (bottom and top, respectively). In Figure~\ref{fig:test1:domain:mesh},
we depict a (curved) square mesh and a (curved) Voronoi  tessellation.

In Figure~\ref{fig:errors:k3}, we show the rate of convergence with the computable errors defined in~\eqref{computable:error} on the given sequences of meshes with uniform mesh refinements for the polynomial degree $k=3$. 
We notice that the optimal rate of convergence predicted in Theorem~\ref{theorem:convergence} is attained. 
Moreover, the expected optimal rate of convergence in the weaker $L^2$- and $H^1$-norms (i.e., orders $\mathcal{O}(h^4)$ and $\mathcal{O}(h^3)$, respectively) are also attained. 
Note that we have not proved the estimate for these cases, but the optimal rate of convergence for these weaker norms can be derived by combining the tools presented here, with a duality argument as in~\cite[Sections 4 and 5]{ChM-camwa} (see Remark~\ref{Remark:L2-H1:norms}).

\begin{figure}[h!]
	\begin{center}
		{\includegraphics[height=6.0cm, width=5.2cm]{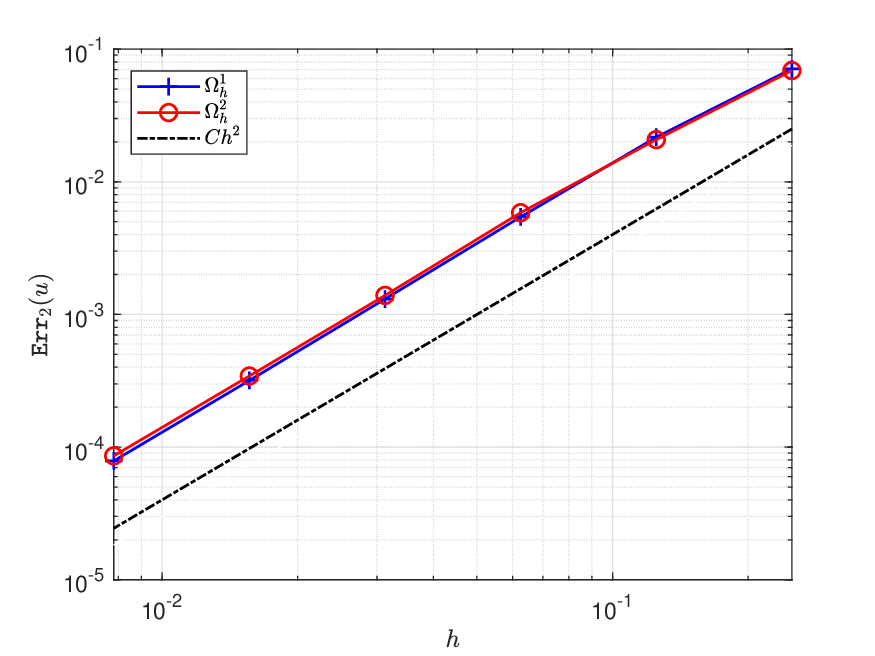}} \hspace*{-2.5mm}
		{\includegraphics[height=6.0cm, width=5.2cm]{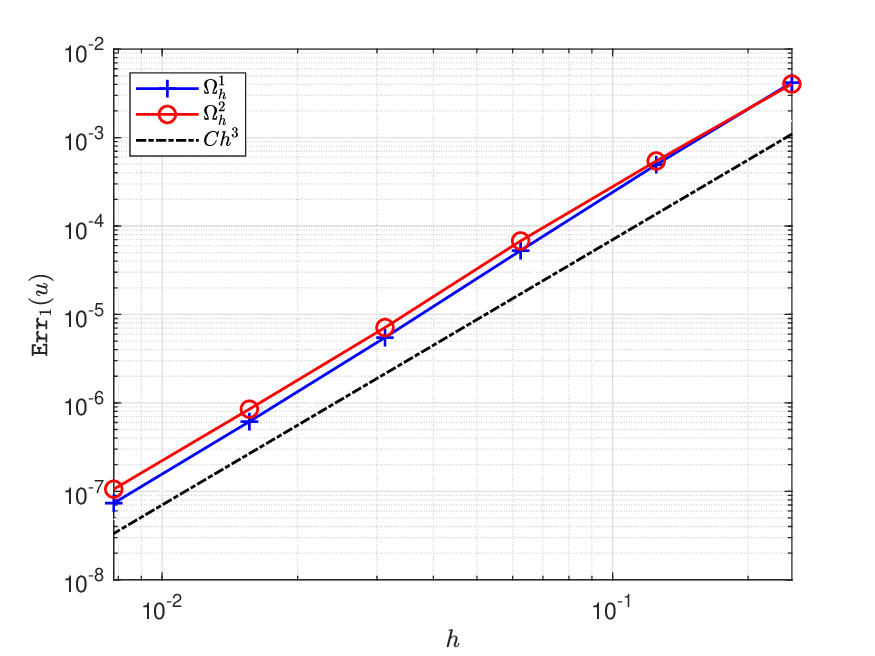}} \hspace*{-2.5mm}
		{\includegraphics[height=6.0cm, width=5.2cm]{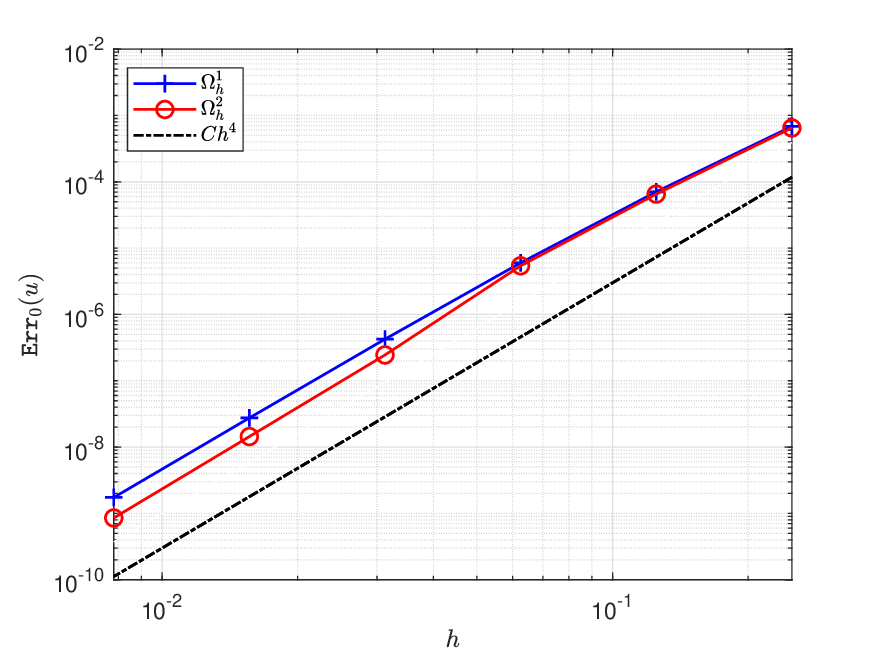}}
	\end{center}
	\caption{Errors ${\rm Err}_{2}(u)$, ${\rm Err}_{1}(u)$ and ${\rm Err}_{0}(u)$ for the  quadrilateral (curved) and Voronoi (curved)  meshes, with $k=3$.}
	\label{fig:errors:k3} 
\end{figure}

As an additional test, we approximate the curved domain by employing a Voronoi tessellation sequence
of elements with straight edges and  we force the homogeneous Dirichlet boundary conditions; see Figure~\ref{fig:stright:voronoi}. 

\begin{figure}[h!]
	\begin{center}
		{\includegraphics[height=7.0cm, width=6.8cm]{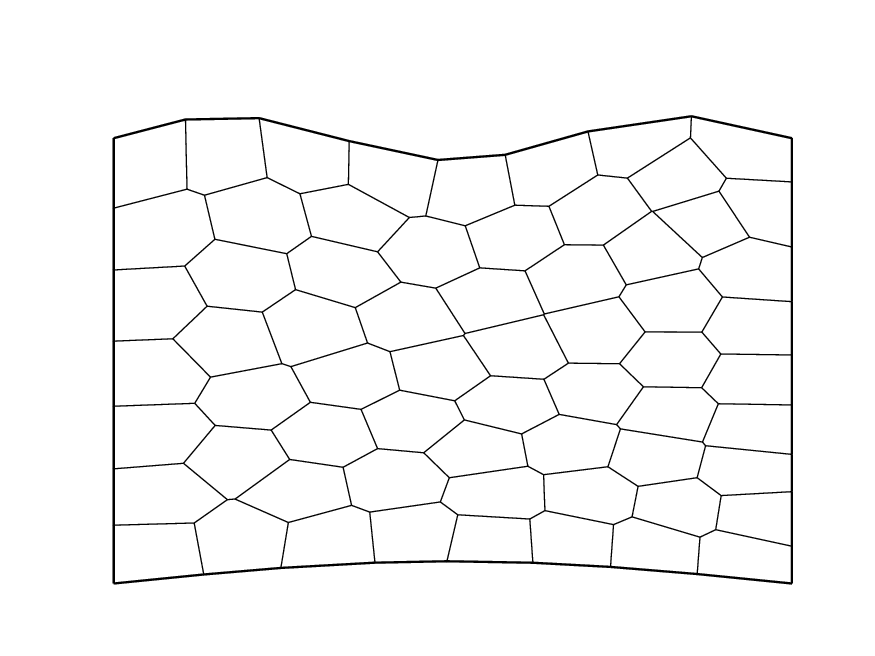}} 
	\end{center}
	\caption{An example of (straight) Voronoi tessellation over the domain $\O$.}
\label{fig:stright:voronoi} 
\end{figure}

In Figure~\ref{fig:errors_straight:k3} we show the error obtained for the sequence of Voronoi meshes on the approximated domain, by using the standard $C^1$-VEM on (straight) polygons. 

As expected, the method obtained by the approximation of the domain with straight edge polygons suffers a loss of convergence order. We note that this loss becomes very notable for the $L^2$- and $H^1$-norms (which are only of order $\mathcal{O}(h^2)$), while for the $H^2$-norm can be observed a slight loss of convergence rate in the last level of refinements.
In order to observe an appreciable difference also in the $H^2$ norm, additional experiments with $k \ge 4$ would be needed; this is beyond the current computational resources and the scopes of the present foundational contribution.

\begin{figure}[h!]
	\begin{center}
		{\includegraphics[height=6.0cm, width=5.2cm]{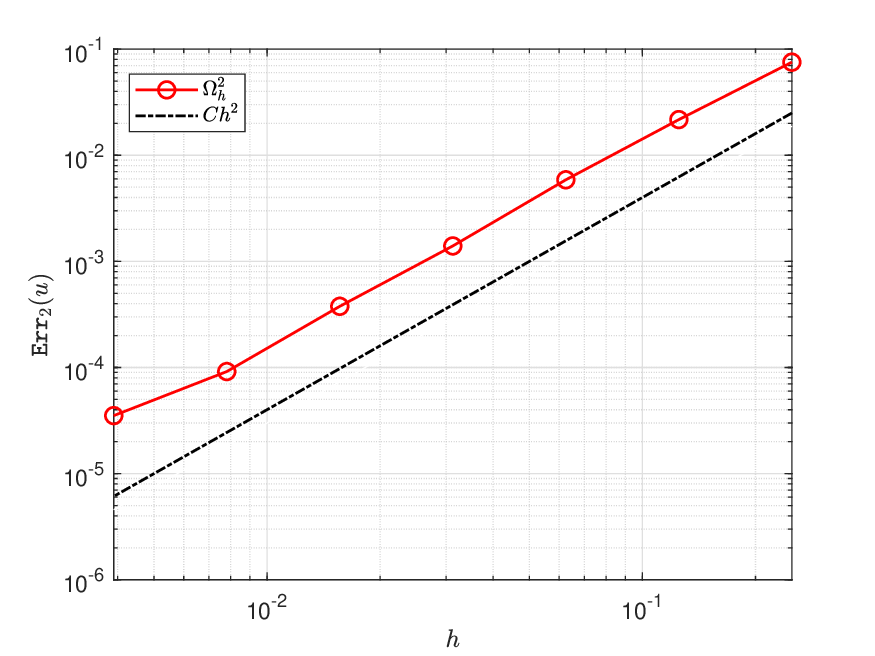}} \hspace*{-2.5mm}
		{\includegraphics[height=6.0cm, width=5.2cm]{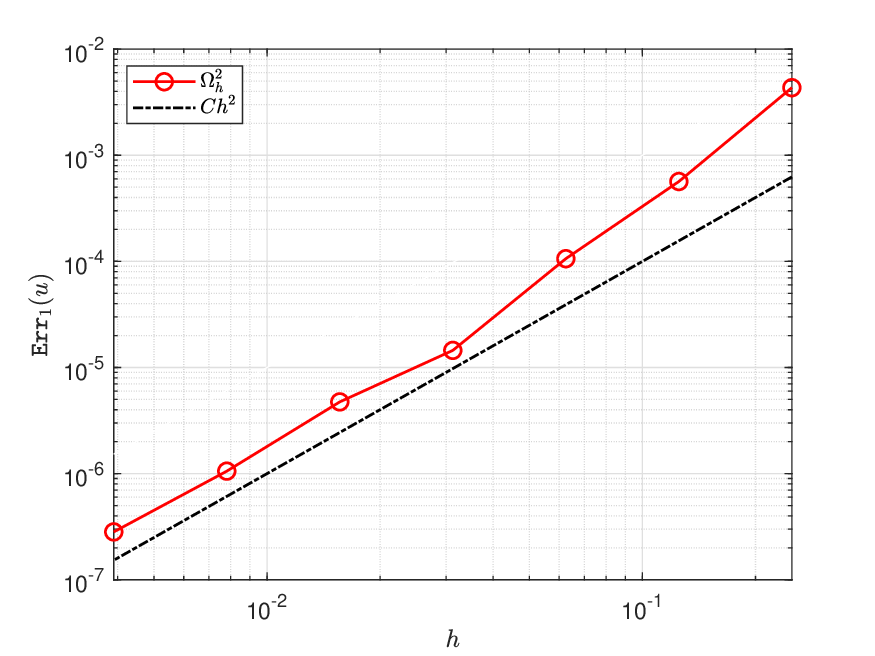}} \hspace*{-2.5mm}
		{\includegraphics[height=6.0cm, width=5.2cm]{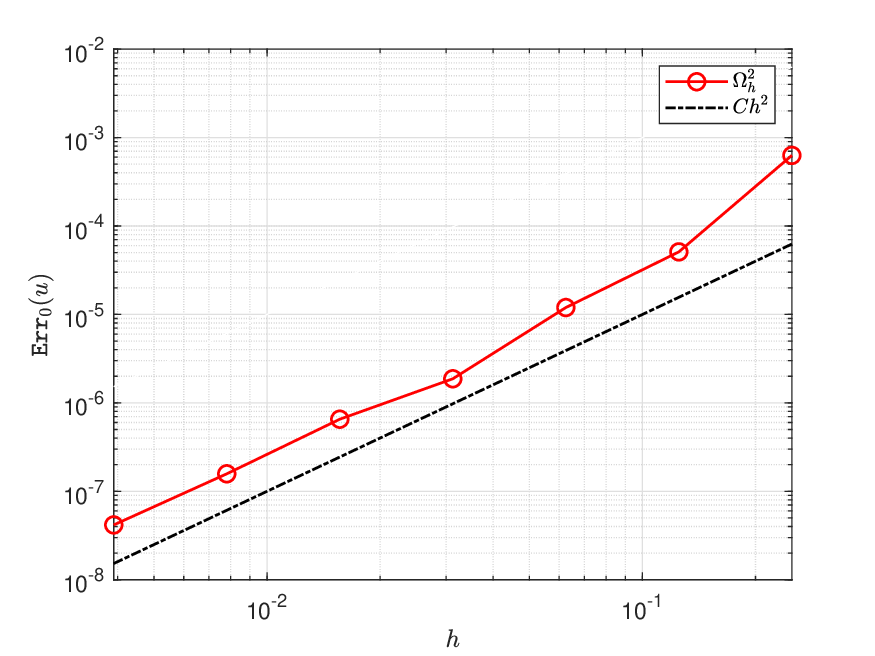}}
	\end{center}
	\caption{Errors ${\rm Err}_{2}(u)$, ${\rm Err}_{1}(u)$ and ${\rm Err}_{0}(u)$ for the Voronoi (straight) mesh, with $k=3$.}
	\label{fig:errors_straight:k3} 
\end{figure}

\small
\section*{Acknowledgements} 	
The authors are deeply grateful to Professor Alessandro Russo (Universit\'a degli studi Milano-Bicocca), for the help in the programming issues considering curved edges.
The first author was partially supported by the European Union through the ERC Synergy grant NEMESIS 
(project number 101115663).
The second author was partially supported by the National Agency
for Research and Development, ANID-Chile through FONDECYT project 1220881,
by project {\sc Anillo of Computational Mathematics for Desalination Processes} ACT210087,
and by project Centro de Modelamiento Matem\'atico (CMM), FB210005, BASAL funds for centers of excellence.
The third author was supported by project Centro de Modelamiento Matem\'atico (CMM), FB210005,
BASAL funds for centers of excellence.	
\small

\end{document}